\colorlet{darkblue}{blue!50!black}
\newcommand{\p}{\partial}
\newcommand{\e}{\varepsilon}
\newcommand{\R}{{\mathbb R}}
\newcommand{\IP}{{\mathbb P}}
\newcommand{\Q}{{\mathbb Q}}
\newcommand{\Z}{{\mathbb Z}}
\newcommand{\E}{{\mathbb E}}
\newcommand{\N}{{\mathbb N}}
\newcommand{\PPhi}{{\boldsymbol\varPhi}}
\newcommand{\PPsi}{{\boldsymbol\varPsi}}
\newcommand{\aA}{{\cal A}}
\newcommand{\BB}{{\cal B}}
\newcommand{\CC}{{\cal C}}
\newcommand{\FF}{{\cal F}}
\newcommand{\HH}{{\cal H}}
\newcommand{\KK}{{\cal K}}
\newcommand{\MM}{{\cal M}}
\newcommand{\OO}{{\cal O}}
\newcommand{\TT}{{\cal T}}
\newcommand{\XX}{{\cal X}}
\newcommand{\YY}{{\cal Y}}
\newcommand{\BBBB}{{\mathfrak B}}
\newcommand{\kkk}{{\boldsymbol{\mathit k}}}
\newcommand{\diam}{\mathop{\rm diam}\nolimits}
\newcommand{\area}{\mathop{\rm Area}}
\theoremstyle{plain}
\newtheorem*{mta}{Theorem A}
\newtheorem*{mtb}{Theorem B}
\newtheorem{theorem}{Theorem}[section]
\newtheorem{lemma}[theorem]{Lemma}
\newtheorem{proposition}[theorem]{Proposition}
\newtheorem{corollary}[theorem]{Corollary}
\theoremstyle{definition}
\newtheorem{definition}[theorem]{Definition}
\newtheorem{condition}[theorem]{Condition}
\theoremstyle{remark}
\newtheorem{remark}[theorem]{Remark}
\newtheorem*{example*}{Example}
\numberwithin{equation}{section}
\begin{document}
\author{Armen Shirikyan\footnote{Department of Mathematics, University of Cergy--Pontoise, CNRS UMR 8088, 2 avenue Adolphe Chauvin, 95302 Cergy--Pontoise, France; E-mail: Armen.Shirikyan@u-cergy.fr} \and Sergey Zelik\footnote{Department of Mathematics, University of Surrey, Guildford GU2 7XH, UK; E-mail: S.Zelik@surrey.ac.uk}}
\title{Exponential attractors for random dynamical systems and applications}
\date{\today}
\maketitle
\begin{abstract}
The paper is devoted to constructing a random exponential attractor for some classes of stochastic PDE's. We first prove the existence of an exponential attractor for abstract random dynamical systems and study its dependence on a parameter and then apply these results to a nonlinear reaction-diffusion system with a random perturbation. We show, in particular, that the attractors can be constructed in such a way that the symmetric distance between the attractors for stochastic and deterministic problems goes to zero with the amplitude of the random perturbation. 

\smallskip
\noindent
{\bf AMS subject classifications:} 35B41, 35K57, 35R60, 60H15

\smallskip
\noindent
{\bf Keywords:} Random exponential attractors, stochastic PDE's, reaction-diffusion equation
\end{abstract}

\tableofcontents


\section{Introduction}
\label{s1}
The theory of attractors for partial differential equations (PDE's) has been developed intensively since late seventies of the last century. It is by now well known that many  dissipative PDE possesses a minimal attractor, even if the Cauchy problem is not known to be well posed. Moreover, one can establish explicit upper and lower  bounds for the dimension of a minimal attractor. A comprehensive presentation of the theory of attractors can be found in~\cite{CV2002,SY2002}. Similar results were also proved in the case of random dynamical systems (RDS) generated by stochastic PDE's, such as the Navier--Stokes system or reaction-diffusion equations with random perturbations; see~\cite{CF-94,CDF-97}. A drawback of the theory of attractors is that, in general, it is impossible to have any estimate for the rate of convergence to the minimal attractor. Furthermore, in the case of RDS, the attraction property holds when the initial time goes to~$-\infty$, whereas one is usually interested in the large-time asymptotics of solutions for the Cauchy problem with a fixed initial time. To remedy these shortcomings, a concept of exponential attractors was suggested in~\cite{EFNT1994} for deterministic problems. In contrast to the attractors discussed above, they do not possess any minimality property, but still  have a finite fractal dimension and, moreover, attract trajectories exponentially fast. We refer the reader to the  review paper~\cite{MZ-2008} (and the references therein) for a detailed account of the results on exponential attractors obtained so far. 

The aim of this article is to construct finite-dimensional exponential attractors for some classes of RDS and then to show that the general results are applicable in the case of reaction--diffusion equations. To be precise, let us consider from the very beginning the following problem in a bounded domain $D\subset\R^n$ with a smooth boundary~$\p D$:
\begin{align} 
\dot u-a\Delta u+f(u)&=h(x)+\eta(t,x), \label{1.1}\\
u\bigr|_{\p D}&=0,\label{1.2}\\
u(0,x)&=u_0(x). \label{1.3}
\end{align}
Here $u=(u_1,\dots,u_k)^t$ is an unknown vector function, $a$ is a $k\times k$ matrix such that $a+a^t>0$, $f\in C^2(\R^k,\R^k)$ is a function satisfying some natural growth and dissipativity conditions, $h(x)$ is a deterministic external force acting on the system, and~$\eta$ is a random process, white in time and regular in the space variables; see Section~\ref{s2.3} for the exact hypotheses imposed on~$f$ and~$\eta$. The Cauchy problem~\eqref{1.1}--\eqref{1.3} is well posed in the space $H:=L^2(D,\R^k)$, and we denote by $\PPhi=\{\varphi_t:H\to H,t\ge0\}$ the corresponding RDS defined on a probability space $(\Omega,\FF,\IP)$ with a group of shift operators $\{\theta_t:\Omega\to\Omega,t\in\R\}$ (see Section~\ref{s2.3}). We have the following result on the existence of an exponential attractor for~$\PPhi$.

\begin{mta} 
There is a random compact set $\MM_\omega\subset H$ and an event $\Omega_*\subset\Omega$ of full measure such that the following properties hold for $\omega\in\Omega_*$. 

\smallskip
\noindent
{\sl Semi-invariance}. $\varphi_t^\omega(\MM_\omega)\subset \MM_{\theta_t\omega}$ for all $t\ge0$.

\smallskip
\noindent
{\sl Exponential attraction}.
There is $\beta>0$ such that for any ball $B\subset H$ we have
$$
\sup_{u\in B}\,\inf_{v\in \MM_{\theta_t\omega}}
\|\varphi_t^\omega(u)-v\|\le C(B)e^{-\beta t},
\quad t\ge0,
$$
where $C(B)$ is a constant depending only on~$B$.

\smallskip
\noindent
{\sl Finite-dimensionality}. 
There is a number $d>0$ such that $\dim_f(\MM_\omega)\le d$, where~$\dim_f$ stands for the fractal dimension of~$\MM_\omega$. 
\end{mta}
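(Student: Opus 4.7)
The plan is to deduce Theorem~A by applying an abstract existence theorem for random exponential attractors for RDS --- which is announced as the main general result of the paper --- to the specific cocycle~$\PPhi$ generated by~\eqref{1.1}--\eqref{1.3}. The work then reduces to verifying the hypotheses of that abstract theorem, which, by analogy with the deterministic case~\cite{EFNT1994,MZ-2008}, will essentially consist of: \emph{(a)}~the existence of a random absorbing ball for $\PPhi$ in~$H$, and \emph{(b)}~a smoothing/squeezing estimate on trajectories issuing from this ball, valued in a space compactly embedded in~$H$, together with appropriate measurability of the random objects involved.

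For \emph{(a)}, I would use the classical device of writing $u = v + z$, where $z$ is a stationary Ornstein--Uhlenbeck-type process solving $\dot z - a\Delta z + z = \eta$ on~$\R$. The remainder~$v$ satisfies a pathwise deterministic equation with random coefficients; testing with~$v$ and using $a+a^t>0$, the dissipativity assumptions on~$f$, and Poincar\'e's inequality yields an energy inequality of the form
\[
\tfrac{d}{dt}\|v\|_H^2 + \alpha\,\|v\|_H^2 \le C\bigl(1 + \|z(t)\|_{L^\infty}^{p}\bigr),
\]
which, combined with stationarity and moment bounds on~$z$, produces a tempered random absorbing ball $B_\omega$ in~$H$. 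Analogous estimates at the $H^1$-level provide an absorbing set in a higher-regularity space, thereby giving $L^\infty$-control of trajectories inside~$B_\omega$ once~$t$ is large enough.

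For \emph{(b)}, I would take $u_0, u_0' \in B_\omega$ and consider the corresponding solutions $u,u'$ driven by the \emph{same} realization of~$\eta$. Their difference $w = u - u'$ then satisfies the deterministic linear equation
\[
\dot w - a\Delta w + \ell(t,x)\,w = 0, \qquad \ell(t,x) = \int_0^1 f'\bigl(su + (1-s)u'\bigr)\,ds,
\]
in which~$\ell$ is bounded in~$L^\infty$ along the orbit (using the previous step together with $f \in C^2$). Parabolic energy estimates then give
\[
\|w(T)\|_{H^1} \le K(\omega)\,\|w(0)\|_H
\]
for any fixed $T > 0$, and since $H^1(D) \hookrightarrow H$ is compact on the bounded domain~$D$, this is precisely the squeezing property required by the abstract theorem.

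The main obstacle I anticipate is not the derivation of \emph{(a)}--\emph{(b)}, which follows from standard if delicate reaction--diffusion estimates, but the $\omega$-book-keeping: I need $B_\omega$ to be a measurable tempered set-valued map and the constant $K(\omega)$ to have controlled growth under the shift group $\{\theta_t\}$, so that the abstract construction --- applied discretely to $\varphi_T^\omega$ via an $\e$-net covering of $\varphi_T^\omega(B_\omega)$ by balls of geometrically shrinking radii --- yields a \emph{measurable} family of compact sets with a uniform attraction rate~$\beta$ and a uniform dimension bound~$d$ on an event $\Omega_*$ of full measure. The continuous-time attractor is then recovered by the usual fibered union $\MM_\omega := \bigcup_{t \in [0,T]} \varphi_t^{\theta_{-t}\omega}\bigl(\MM_{\theta_{-t}\omega}^{\mathrm d}\bigr)$ of the discrete exponential attractor $\MM_\omega^{\mathrm d}$, which is semi-invariant by construction and inherits both the finite fractal dimension and the exponential attraction from its discrete counterpart.
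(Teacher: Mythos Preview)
Your overall strategy matches the paper's: reduce to an abstract discrete-time result, verify its hypotheses for the reaction--diffusion cocycle via the decomposition $u=z+v$ and a smoothing estimate $\|w(\tau_0)\|_{H^1}\le K_\omega\|w(0)\|_H$, then pass to continuous time by the fibred union $\MM_\omega=\bigcup_{\tau\in[0,\tau_0]}\varphi_\tau^{\theta_{-\tau}\omega}(\widetilde\MM_{\theta_{-\tau}\omega})$. Two points, however, are not yet in your plan and are precisely where the stochastic case departs from the non-autonomous one.

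First, the ``$\omega$-bookkeeping'' you flag is resolved not by temperedness alone but by integrability plus the Birkhoff ergodic theorem. The abstract construction produces nested $\varepsilon$-nets whose cardinalities and radii are governed by products and sums of the shifted Lipschitz constants $K_{\sigma_{-j}\omega}$; the paper assumes $K_\omega^m\in L^1(\Omega,\IP)$ (with $m$ the entropy exponent in $\HH_\varepsilon(V,H)\le C\varepsilon^{-m}$) and invokes Birkhoff to show $\tfrac1n\sum_{j=1}^n K_{\sigma_{-j}\omega}^m\to\xi_\omega$ a.s. This is what turns the a~priori random covering bounds into a finite fractal-dimension estimate. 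The \emph{deterministic} bound $d$ in Theorem~A then comes from the ergodicity of the time-$\tau_0$ shift on the Wiener space, which forces $\xi_\omega$ to be constant. Without naming this mechanism you cannot close the dimension estimate.

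Second, the abstract hypotheses require more than an absorbing ball: one needs a \emph{stability} condition $\psi_1^\omega(\OO_r(\aA_\omega))\subset\aA_{\sigma_1\omega}$ for some fixed $r>0$, i.e.\ the $r$-neighbourhood of the absorbing set is mapped back into it after one discrete step. This is what allows the iterative covering to stay inside the absorbing family. In the application it is obtained by engineering the radius $R_\omega$ so that $(R_{\theta_t\omega})^2\ge e^{-\delta t}(R_\omega)^2$ and then choosing $\tau_0$ large; it does not follow from absorption alone. A smaller remark: the paper does not obtain $L^\infty$-control of trajectories from $H^1$ (which fails for $n\ge3$); instead it strengthens the growth restriction to $p\le\frac{n}{n-2}$ and runs the difference estimate via Sobolev embeddings $H^1\subset L^{2n/(n-2)}$ and $H^1\subset L^{n(p-1)}$, using the monotonicity condition on~$f'$ rather than a pointwise bound on~$\ell$.
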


Note that this type of results are well known for non-autonomous dynamical systems (e.g., see~\cite{EMZ-2005,MZ-2008}). An essential difference between non-autonomous and stochastic systems is that the latter deal with forces which are, in general, unbounded in time, and some key quantities can be controlled only after taking the time average. This turns out to be sufficient for the construction of an exponential attractor.

\smallskip
Let us now assume that the random force~$\eta$ in Eq.~\eqref{1.1} is replaced by~$\e\eta$, where~$\e\in[-1,1]$ is a parameter. We denote by~$\MM_\omega^\e$ the corresponding exponential attractors. Since in the limit case $\e=0$ the equation is no longer stochastic, the corresponding attractor $\MM=\MM^0$ is also independent of~$\omega$.  A natural question is whether one can construct~$\MM_\omega^\e$ in such a way that the symmetric distance between the attractors of stochastic and deterministic equations goes to zero as $\e\to0$. The following theorem gives a positive answer to that question. 

\begin{mtb}
The exponential attractors~$\MM_\omega^\e$, $\e\in[-1,1]$, can be constructed in such a way that
$$
d^s(\MM_\omega^\e,\MM)\to0
\quad\mbox{almost surely as $\e\to0$},
$$
where $d^s$ stands for the symmetric distance between two subsets of~$H$. 
\end{mtb}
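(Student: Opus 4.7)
The plan is to construct the whole family $\{\MM_\omega^\e:\e\in[-1,1]\}$ via a single inductive scheme anchored on the deterministic attractor $\MM=\MM^0$, and then to deduce convergence from continuous dependence of the flow on~$\e$. The construction underlying Theorem~A builds $\MM_\omega$ as the closure of the union of a nested sequence of finite covers $E_n^\omega$ of the iterated image of an absorbing ball under $\varphi_T$ for a fixed time step $T>0$; the key ingredients are an $\omega$-uniform absorbing ball and a smoothing (or squeezing) estimate for $\varphi_T^\omega$. Since the perturbation amplitude $|\e|$ is bounded by~$1$, both are available $\e$-uniformly, so the same inductive scheme applies for every~$\e$.

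First I would establish continuous dependence of the flow on~$\e$: for any $R,T>0$ and almost every~$\omega$,
\[
\sup_{\|u_0\|\le R}\bigl\|\varphi_T^{\e,\omega}(u_0)-\varphi_T^{0}(u_0)\bigr\|_H
\le \rho(T,R,\omega)\,|\e|,
\]
with $\rho(T,R,\omega)<\infty$ on a full-measure event. This follows by a standard Gronwall argument applied to the equation for $u^\e-u^0$, whose forcing is $\e\eta$ and whose linear part is controlled by $C^2$-bounds on~$f$ on the $\e$-independent absorbing ball. Then I would perform the construction of Theorem~A simultaneously for $\e=0$ and $\e\ne0$, using the deterministic net $E_n^0$ as a template: at each step~$n$, for each centre of $E_n^0$ I would pick a corresponding centre of $E_n^{\e,\omega}$ by applying $\varphi_T^{\e,\theta_{(n-1)T}\omega}$ to the previously chosen random centre, and then completing the cover by finitely many additional points if needed. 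By induction on~$n$ and the continuous-dependence estimate,
\[
d_H\bigl(E_n^{\e,\omega},E_n^0\bigr)\le C_n(\omega)\,|\e|
\quad\text{for almost every }\omega,
\]
with $C_n(\omega)$ almost surely finite. Combined with the exponential attraction from Theorem~A, which guarantees that $\MM_\omega^\e$ and $\MM$ lie in the $q^n$-neighbourhoods of $E_n^{\e,\omega}$ and $E_n^0$ respectively (uniformly in small~$|\e|$, for some $q\in(0,1)$), a standard $\e$--$n$ argument---fix $n$ so that $q^n$ is small, then let $\e\to 0$---yields $d^s(\MM_\omega^\e,\MM)\to 0$.

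The main obstacle is $\omega$-uniformity: the constants $\rho(T,R,\omega)$ and $C_n(\omega)$ come from time-averaged norms of $\eta$ and of the random smoothing constants, so they are almost surely finite but unbounded over~$\omega$. To produce a single full-measure event $\Omega_*$ valid for the whole family, I would intersect the corresponding exceptional sets over a countable sequence $\e_k\to 0$ and over $n\in\N$; a monotonicity argument---the distance $d^s(\MM_\omega^\e,\MM)$ can be bounded, via the same inductive estimate, by its value at $\e_k$ for $|\e|\le\e_k$---then extends the convergence to all $\e\to 0$. A minor but unavoidable technical point is the joint measurability of the chosen nets in $(\e,\omega)$, which would be handled by a measurable-selection theorem applied to the closed set-valued map of admissible minimal covers.
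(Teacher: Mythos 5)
Your overall strategy is the paper's: build the nets for all $\e$ by one inductive scheme so that they depend continuously on $\e$ and degenerate to deterministic objects at $\e=0$, then balance the tail error against the net-perturbation error in an $\e$--$n$ argument. (The paper in fact extracts the stronger quantitative statement $d^s(\MM_\omega^{\e_1},\MM_\omega^{\e_2})\le P_\omega|\e_1-\e_2|^{\gamma}$ from this scheme, of which Theorem~B is an immediate consequence.) However, two steps of your sketch do not work as stated.

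First, the claim that $\MM_\omega^\e$ lies in the $q^n$-neighbourhood of the $n$-th net $E_n^{\e,\omega}$ for a \emph{deterministic} $q\in(0,1)$ does not follow from the exponential attraction of Theorem~A. The attractor is the closure of the increasing union $\bigcup_k E_k$, so estimating $d^s\bigl(\MM_\omega^\e,\bigcup_{l\le n}V_l\bigr)$ forces you to push old nets forward under many iterations of $\varphi_T$, which produces products $\prod_{j=1}^{l}K_{\sigma_{-j}\omega}$ of the random Lipschitz constants; these are a.s.\ finite but unbounded in $\omega$, and for a stochastic forcing they cannot be bounded pathwise. The paper controls them through the Birkhoff ergodic theorem (the quantity $\xi_\omega$ in~\eqref{3.20}), optimises the split point $l$ inside $[1,n]$, and obtains a decay rate $\e_n(\omega)\le 5\exp(-c_\omega n)$ whose exponent is deterministic only because the shift is ergodic (Condition~\ref{E}, Remark~\ref{r3.3}). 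This ergodic-averaging step is the genuinely new ingredient compared with the deterministic theory, and it is missing from your argument. Second, your ``monotonicity argument'' for passing from a countable sequence $\e_k\to0$ to all $\e\to0$ is unfounded: $d^s(\MM_\omega^\e,\MM)$ has no reason to be monotone or dominated by its value at $\e_k$ for $|\e|\le\e_k$. The correct (and simpler) fix is the one the paper builds into Condition~\ref{UH}: all random constants are dominated uniformly over the parameter ($R_\omega^y\le R_\omega$, $K_\omega^y\le K_\omega$), so the estimate $d^s(\MM_\omega^\e,\MM)\le P_\omega|\e|^{\gamma}$ holds for \emph{all} $\e$ on a single full-measure set, with no countable intersection needed. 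Finally, a smaller point you take for granted: that the $\e=0$ attractor produced by this random construction is actually $\omega$-independent requires checking that every random ingredient (the absorbing radius, the Lipschitz constants, and the chosen $\delta$-nets) is deterministic at $\e=0$; the paper arranges this through the covering lemmas (Corollary~\ref{c5.3}, Lemma~\ref{l5.4}), whose output is independent of $\omega$ whenever the input is, and your ``complete the cover by finitely many additional points if needed'' step would also need this structure to preserve the two-sided bound on $d^s(E_n^{\e,\omega},E_n^0)$.
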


We refer the reader to Section~\ref{s5} for more precise statements of the results on the existence of exponential attractors and their dependence on a parameter. Let us note that various results similar to Theorem~B were established earlier in the case of deterministic PDE's; e.g., see the papers~\cite{FGMZ-2004,EMZ-2005}, the first of which is devoted to studying the behaviour of exponential attractors under singular perturbations, while the second deals with non-autonomous dynamical systems and proves H\"older continuous dependence of the exponential  attractor on a parameter.

We emphasize  that the convergence in Theorem~B differs from the one in the case of global attractors, for which, in general, only lower semicontinuity can be established. For instance, let us consider the following one-dimensional ODE perturbed by the time derivative of a standard Brownian motion~$w$:
\begin{equation} \label{toy}
\dot u=u-u^3+\e\dot w.
\end{equation}
When $\e=0$, the global attractor~$\aA$ for~\eqref{toy} is the interval~$[-1,1]$ and is regular in the sense that it consists of the stationary points and the unstable manifolds around them. It is well known that the regular structure of an attractor is very robust and survives rather general deterministic perturbations, and in many cases it is possible to prove that the symmetric distance between the attractors for the perturbed and unperturbed systems goes to zero; see~\cite{BV1992,CVZ-2012}. On the other hand, it is proved in~\cite{CF-1998} that the random attractor~$\aA_\omega^\e$ for~\eqref{toy} consists of a single trajectory and, hence, the symmetric distance between~$\aA$ and~$\aA_\omega^\e$ does not go to zero as~$\e\to0$.

In conclusion, let us mention that some results similar to those described above hold for other stochastic PDE's, including the 2D Navier--Stokes system. They will be considered in a subsequent publication. 

\medskip
The paper is organised as follows. In Section~\ref{s2}, we present some preliminaries on random dynamical systems and a reaction-diffusion equation perturbed by a spatially regular white noise. Section~\ref{s3} is devoted to some general results on the existence of exponential attractors and their dependence on a parameter. In Section~\ref{s5}, we apply our abstract construction to the stochastic reaction-diffusion system~\eqref{1.1}--\eqref{1.3}. Appendix gathers some results on coverings of random compact sets and their image under random mappings, as well as the time-regularity of stochastic processes.

\medskip
{\bf Acknowledgements}. This work was supported by the Royal Society--CNRS grant {\it Long time behavior of solutions for stochastic Navier--Stokes equations\/} (No.~YFDRN93583). The first author was supported by the ANR grant {\it STOSYMAP\/} (No.~ANR 2011 BS01 015 01). 

\subsection*{Notation}
Let $J\subset\R$ be an interval, let $D\subset\R^n$ be a bounded domain with smooth boundary~$\p D$, and let~$X$ be a Banach space. Given a compact set $\KK\subset X$, we denote by $\HH_\e(\KK,X)$ its Kolmogorov $\e$-entropy; see~\cite{lorentz1986}. If $Y$ is another Banach space with compact embedding $Y\Subset X$, then we write $\HH_\e(Y,X)$ for the $\e$-entropy of a unit ball in~$Y$ considered as a subset in~$X$. We denote by $\dot B_X(v,r)$ and~$B_X(v,r)$ the open and closed balls in~$X$ of radius~$r$ centred at~$v$ and by~$\OO_r(A)$ the closed $r$-neighbourhood of a subset $A\subset X$. The closure of~$A$ in~$X$ is denoted by~$[A]_X$. Given any set~$C$, we write~$\#C$ for the number of its elements. 

\medskip
We shall use the following function spaces:

\smallskip
\noindent
$L^p=L^p(D)$ denotes the usual Lebesgue space in~$D$ endowed with the standard  norm~$\|\cdot\|_{L^p}$. In the case $p=2$, we omit the subscript from the notation of the norm. We shall write $L^p(D,\R^k)$ if we need to emphasise the range of functions. 

\smallskip
\noindent
$W^{s,p}=W^{s,p}(D)$ stands for the standard Sobolev space with a norm~$\|\cdot\|_{s,p}$. In the case $p=2$, we write~$H^s=H^s(D)$ and~$\|\cdot\|_s$, respectively. We denote by $H_0^s=H^s_0(D)$ the closure in~$H^s$ of the space of infinitely smooth functions with compact support. 

\smallskip
\noindent
$C(J,X)$ stands for the space of continuous functions $f:J\to X$. 

\medskip
When describing a property involving a random parameter~$\omega$, we shall assume that it holds almost surely, unless specified otherwise. Given a random function $f_\omega:D\to X$, we shall say that it is (almost surely) H\"older-continuous if there is $\gamma\in(0,1)$ such that, for any bounded ball $B\subset \R^n$, we have
$$
\|f_\omega(t_1)-f_\omega(t_2)\|_X\le C_\omega|t_1-t_2|^\gamma,
\quad t_1,t_2\in B,
$$
where $C_\omega=C_\omega(B)$ is an almost surely finite random variable. If~$f$ depends on an additional parameter $y\in Y$ (that is, $f=f_\omega^y(t)$),  then we say that $f$ is H\"older-continuous uniformly in~$y$ if the above inequality holds for~$f_\omega^y(t)$ with a random constant~$C_\omega(B)$ not depending on~$y$.  

\smallskip
We denote by~$c_i$ and~$C_i$ unessential positive constants not depending on other parameters. 

\section{Preliminaries}
\label{s2}

\subsection{Random dynamical systems and their attractors}
\label{s2.1}
Let $(\Omega,\FF,\IP)$ be a complete probability space, $\{\theta_t,t\in\R\}$ be a group of measure-preserving transformations of~$\Omega$, and~$X$ be a separable Banach space. Recall that a {\it continuous random dynamical system in~$X$ over\/}~$\{\theta_t\}$ (or simply an {\it RDS in~$X$\/}) is defined as a family of continuous mappings $\PPhi=\{\varphi_t^\omega:X\to X,t\ge0\}$ that satisfy the following conditions:
\begin{description}
\item[Measurability.]
The mapping $(t,\omega,u)\mapsto \varphi_t^\omega(u)$ from $\R_+\times\Omega\times X$ to~$X$ is measurable with respect to the $\sigma$-algebras $\BB_{\R_+}\otimes\FF\otimes\BB_X$ and~$\BB_X$.
\item[Perfect co-cycle property.]
For almost every $\omega\in\Omega$, we have the identity
\begin{equation} \label{2.1}
\varphi_{t+s}^\omega=\varphi_t^{\theta_s\omega}\circ\varphi_s^\omega, \quad t,s\ge0.
\end{equation}
\item[Time regularity.]
For almost every $\omega\in\Omega$, the function $(t,\tau)\mapsto\varphi_t^{\theta_{\tau}\omega}(u)$, defined on $\R_+\times\R$ with range in~$X$, is H\"older-continuous with some deterministic exponent~$\gamma>0$, uniformly with respect to $u\in\KK$ for any compact subsets~$\KK\subset X$.
\end{description}
An example of RDS is given in the next subsection, which is devoted to some preliminaries on a reaction-diffusion system with a random perturbation. 

Large-time asymptotics of trajectories for RDS is often described in terms of attractors. This paper  deals with random exponential attractors, and we now define some basic concepts. 

Recall that the distance between a point $u\in X$ and a subset $F\subset X$ is given by $d(u,F)=\inf_{v\in F}\|u-v\|$. The Hausdorff and symmetric distances between two subsets is defined by
\begin{align*}
d(F_1,F_2)&=\sup_{u\in F_1}d(u,F_2),\\
d^s(F_1,F_2)&=\max\bigl\{d(F_1,F_2),d(F_2,F_1)\bigr\}.
\end{align*}
We shall write $d_X$ and~$d_X^s$ to emphasise that the distance is taken in the metric of~$X$. Let $\{\MM_\omega,\omega\in\Omega\}$ be a random compact set in~$X$, that is, a family of compact subsets such that the mapping $\omega\mapsto d(u,\MM_\omega)$ is measurable for any $u\in X$. 

\begin{definition} \label{d2.1}
A random compact set~$\{\MM_\omega\}$ is called a {\it random exponential attractor\/} for the RDS~$\{\varphi_t\}$ if there is a set of full measure $\Omega_*\in\FF$ such that the following properties hold for $\omega\in\Omega_*$.
\begin{description}
\item[Semi-invariance.] For any $t\ge0$, we have $\varphi_t^\omega(\MM_\omega)\subset\MM_{\theta_t\omega}$. 
\item[Exponential attraction.]
There is a constant~$\beta>0$  such that 
\begin{equation} \label{2.2}
d\bigl(\varphi_t^\omega(B),\MM_{\theta_t\omega}\bigr)
\le C(B) e^{-\beta t}
\quad\mbox{for $t\ge0$},
\end{equation}
where $B\subset H$ is an arbitrary ball and~$C(B)$ is a constant that depends only on~$B$.

\item[Finite-dimensionality.]
There is random variable $d_\omega\ge0$ which is finite on~$\Omega_*$ such that 
\begin{equation} \label{2.3}
\dim_f\bigl(\MM_\omega\bigr)\le d_\omega.
\end{equation}
\item[Time continuity.] 
The function $t\mapsto d^s\bigl(\MM_{\theta_t\omega},\MM_\omega\bigr)$ is H\"older-continuous on~$\R$ with some exponent~$\delta>0$. 
\end{description}
\end{definition}

We shall also need the concept of a {\it random absorbing set\/}. Recall that a random compact set $\aA_\omega$ is said to be {\it absorbing\/} for~$\PPhi$ if for any ball $B\subset X$ there is $T(B)\ge0$ such that
\begin{equation} \label{2.4}
\varphi_t^\omega(B)\subset \aA_{\theta_t\omega}
\quad\mbox{for $t\ge T(B)$, $\omega\in\Omega$}. 
\end{equation}
All the above definitions make sense also in the case of discrete time, that is, when the time variable varies on the integer lattice~$\Z$. The only difference is that the property of time continuity should be skipped for discrete-time RDS and their attractors. In what follows, we shall deal with both situations.

\subsection{Reaction-diffusion system perturbed by white noise}
\label{s2.3}
Let $D\subset \R^n$ be a bounded domain with a smooth boundary~$\p D$. We consider the reaction-diffusion system~\eqref{1.1}, \eqref{1.2}, in which 
$u=(u_1,\dots,u_k)^t$ is an unknown vector function and~$a$ is a $k\times k$ matrix such that 
\begin{equation} \label{2.5}
a+a^t>0.
\end{equation}
We assume that $f\in C^2(\R^k,\R^k)$ satisfies the following growth and dissipativity conditions: 
\begin{align}
\langle f(u),u\rangle&\ge -C+c|u|^{p+1},\label{2.6}\\ 
f'(u)+f'(u)^t&\ge -C I,\label{2.7}\\
|f'(u)|&\le C(1+|u|)^{p-1},\label{2.8}
\end{align}
where $\langle \cdot,\cdot\rangle$ stands for the scalar product in~$\R^k$, $f'(u)$ is the Jacobi matrix for~$f$, $I$~is the identity matrix, $c$~and~$C$ are positive constants, and $0\le p\le \frac{n+2}{n-2}$. As for the right-hand side of~\eqref{1.1}, we assume $h\in L^2(D,\R^k)$ is a deterministic function and $\eta$ is a {\it spatially regular white noise\/}. That is, 
\begin{equation} \label{2.9}
\eta(t,x)=\frac{\p}{\p t}\zeta(t,x), \quad \zeta(t,x)=\sum_{j=1}^\infty b_j\beta_j(t)e_j(x),
\end{equation}
where $\{\beta_j(t),t\in\R\}$ is a sequence of independent two-sided Brownian motions defined on a complete probability space~$(\Omega,\FF,\IP)$, $\{e_j\}$~is an orthonormal basis in~$L^2(D,\R^k)$ formed of the eigenfunctions of the Dirichlet Laplacian, and~$b_j$ are real numbers satisfying the condition
\begin{equation} \label{2.10}
\BBBB:=\sum_{j=1}^\infty b_j^2<\infty. 
\end{equation}
In what follows, we shall assume that~$(\Omega,\FF,\IP)$ is the canonical space; that is, $\Omega$ is the space of continuous functions $\omega:\R\to H$ vanishing at zero, $\IP$~is the law of~$\zeta$ (see~\eqref{2.9}), and~$\FF$ is the $\IP$-completion of the Borel $\sigma$-algebra. In this case, the process~$\zeta$ can be written in the form $\zeta^\omega(t)=\omega(t)$, and 
a group of shifts~$\theta_t$ acts on~$\Omega$ by the formula $(\theta_t\omega)(s)=\omega(t+s)-\omega(t)$. Furthermore, it is well known (e.g., see Chapter~VII in~\cite{stroock1993}) the restriction of~$\{\theta_t,t\in\R\}$ to any lattice~$T\Z$ is ergodic. 

Let us denote $H=L^2(D,\R^k)$ and $V=H_0^1(D,\R^k)$.
The following result on the well-posedness of problem~\eqref{1.1}--\eqref{1.3}  can be established by standard methods used in the theory of stochastic PDE's (e.g., see~\cite{DZ1992,flandoli-1994}). 

\begin{theorem} \label{t2.2} 
Under the above hypotheses, for any $u_0\in H$ there is a stochastic process $\{u(t),t\ge0\}$ that is adapted to the filtration generated by~$\zeta(t)$ and possesses the following properties:
\begin{description}
\item[Regularity:]
Almost every trajectory of~$u(t)$ belongs to the space 
$$
\XX=C(\R_+,H)\cap L_{\rm loc}^2(\R_+,V)\cap 
L_{\rm loc}^{p+1}(\R_+\times D).
$$
\item[Solution:]
With probability~$1$, we have the relation
$$
u(t)=u_0+\int_0^t\bigl(a\Delta u-f(u)+h\bigr)\,ds+\zeta(t), \quad t\ge0,
$$
where the equality holds in the space $H^{-1}(D)$.
\end{description}
Moreover, the process~$u(t)$ is unique in the sense that if~$v(t)$ is another process with the same properties, then with probability~$1$ we have $u(t)=v(t)$ for all~$t\ge0$.
\end{theorem}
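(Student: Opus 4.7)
\medskip
\noindent\textbf{Proof proposal.}
The plan is to reduce the SPDE to a path-by-path deterministic problem via a stochastic convolution, and then apply standard monotonicity arguments from the deterministic theory of reaction-diffusion equations. Define the auxiliary process
$$
z(t)=\int_0^t e^{(t-s)a\Delta}\,d\zeta(s),
$$
where the integral is understood in the It\^o sense in~$H$ and~$a\Delta$ is the (generator of the analytic semigroup associated with the) Dirichlet Laplacian with coefficient matrix~$a$. Using~\eqref{2.10}, the positivity~\eqref{2.5} and classical stochastic-convolution estimates (\cite{DZ1992}), I would show that almost every trajectory of~$z$ belongs to
$$
\XX_z:=C(\R_+,V)\cap L^2_{\rm loc}(\R_+,H^2\cap V)\cap L^{p+1}_{\rm loc}(\R_+\times D)
$$
and, moreover, $z$ solves, with probability~$1$, the linear equation $\dot z-a\Delta z=\eta$ in~$H^{-1}(D)$ with $z(0)=0$. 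The last membership uses $V\hookrightarrow L^{p+1}$, which is valid since $p+1\le 2n/(n-2)$.

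\smallskip
Next I would look for $u$ in the form $u=v+z$, where $v$ must satisfy the random-coefficient deterministic PDE
\begin{equation}
\dot v-a\Delta v+f(v+z)=h,\qquad v\bigr|_{\p D}=0,\qquad v(0)=u_0,\label{prop-veq}
\end{equation}
pointwise in~$\omega$. Fix such an~$\omega$ for which~$z(\cdot)\in\XX_z$. I would construct~$v$ by the Faedo--Galerkin scheme using the basis~$\{e_j\}$ and deriving the a priori estimates by testing~\eqref{prop-veq} against~$v$. The dissipativity~\eqref{2.6} together with~\eqref{2.8} and Young's inequality yields, after absorbing cross terms with~$z$ and~$h$,
$$
\tfrac{1}{2}\tfrac{d}{dt}\|v\|^2+c_1\bigl(\|v\|_V^2+\|v\|_{L^{p+1}}^{p+1}\bigr)\le C_1\bigl(1+\|h\|^2+\|z\|_V^2+\|z\|_{L^{p+1}}^{p+1}\bigr),
$$
which, combined with Gronwall's lemma and the regularity of~$z$, delivers uniform bounds in $C([0,T],H)\cap L^2(0,T;V)\cap L^{p+1}((0,T)\times D)$ for the Galerkin approximations. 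Passing to the limit and checking continuity in~$H$ via an Aubin--Lions/Strauss argument gives a solution $v\in\XX$ of~\eqref{prop-veq}, whence $u=v+z$ satisfies the required regularity and integral identity.

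\smallskip
For uniqueness, if~$u_1,u_2$ are two solutions with the same initial datum, set $w=u_1-u_2=v_1-v_2$; then $w$ satisfies, pathwise and in~$H^{-1}$,
$$
\dot w-a\Delta w+\bigl(f(v_1+z)-f(v_2+z)\bigr)=0,\qquad w(0)=0.
$$
Using~\eqref{2.5} and the one-sided Lipschitz bound implied by~\eqref{2.7},
$$
\langle f(v_1+z)-f(v_2+z),w\rangle\ge -C\|w\|^2,
$$
testing with~$w$ and applying Gronwall's lemma immediately gives $w\equiv0$. Finally, measurability and adaptedness follow from the construction of~$z$ as a stochastic integral and the fact that~$v$ is obtained as a deterministic functional of the path of~$z$.

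\smallskip
The main technical obstacle is the critical growth case $p=(n+2)/(n-2)$: one must have enough integrability on the stochastic convolution~$z$ (in particular $z\in L^{p+1}_{\rm loc}(\R_+\times D)$) so that~$f(v+z)$ is well defined and the cross terms are absorbed by the dissipative term $\|v\|_{L^{p+1}}^{p+1}$. This is where the spatial regularity of~$\eta$ encoded in~\eqref{2.10}, combined with the maximal regularity of the heat semigroup and the embedding $V\hookrightarrow L^{p+1}$, is essential.
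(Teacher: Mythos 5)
Your overall route is the same as the paper's: the paper does not prove Theorem~\ref{t2.2} in detail but appeals to standard methods (\cite{DZ1992,flandoli-1994}) and, in the discussion following the statement, sets up exactly your decomposition $u=z+v$, with $z$ solving the linear stochastic equation~\eqref{2.24} and $v$ solving the pathwise problem~\eqref{2.25}; your Galerkin construction and the uniqueness argument based on~\eqref{2.5} and~\eqref{2.7} are the standard way to complete this scheme, and your treatment of measurability/adaptedness is fine.

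One step is overclaimed. Under the standing hypothesis~\eqref{2.10} alone, the stochastic convolution does not belong to $C(\R_+,V)\cap L^2_{\rm loc}(\R_+,H^2)$: the It\^o identity for $\|z\|_1^2$ produces the trace $\sum_j\lambda_j b_j^2$, so membership in $L^2_{\rm loc}(\R_+,H^2)$ (and continuity in~$V$) requires $\sum_j\lambda_j b_j^2<\infty$, a condition imposed only later in~\eqref{5.2}. What~\eqref{2.10} gives is $z\in C(\R_+,H)\cap L^2_{\rm loc}(\R_+,V)$ — precisely the space~$\YY$ the paper uses — together with $z\in C(\R_+,H^s)$ for every $s<1$ by factorization. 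This gap is harmless in the subcritical range, since then $H^s\hookrightarrow L^{p+1}$ for some $s<1$ and your cross terms are absorbed as you describe; but in the critical case $p=(n+2)/(n-2)$, which you single out as the main difficulty, the embedding $H^s\hookrightarrow L^{2n/(n-2s)}$ for $s<1$ just misses $L^{p+1}$, so $z\in L^{p+1}_{\rm loc}(\R_+\times D)$ does not follow from your stated ingredients. The standard way to close this is to derive the $L^{p+1}_{\rm loc}(\R_+\times D)$ bound on the Galerkin approximations of~$u$ itself from the It\^o energy identity for $\|u\|^2$ combined with the dissipativity~\eqref{2.6} (so that $c\int\!\!\int|u|^{p+1}$ appears on the good side of the inequality), rather than from a separate $L^{p+1}$ bound on~$z$.
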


The family of solutions for~\eqref{1.1}, \eqref{1.2} constructed in Theorem~\ref{t2.2} form an RDS in the space~$H$. Let us describe in more detail a set of full measure on which the perfect co-cycle property and the H\"older-continuity in time are true. 

\smallskip
Let us denote by~$z=z^\omega(t)$ the solution of the linear equation
\begin{equation} \label{2.24}
\dot z-a\Delta z=h+\eta(t),
\end{equation}
supplemented with the zero initial and boundary conditions. Such a solution exists and belongs to the space $\YY:=C(\R_+,H)\cap L_{\rm loc}^2(\R_+,V)$ with probability~$1$. Moreover, one can find a set~$\Omega_*\in\FF$ of full measure such that $\theta_t(\Omega_*)=\Omega_*$ for all $t\in\R$ and $z^\omega\in\YY$ for $\omega\in\Omega_*$. We now write a solution of~\eqref{1.1}--\eqref{1.3} in the form $u=z+v$ and note that~$v$ must satisfy the equation
\begin{equation} \label{2.25}
\dot v-a\Delta v+f(z+v)=0. 
\end{equation}
For any $\omega\in\Omega_*$ and $u_0\in H$, this equation has a unique solution $v\in\XX$ issued from~$u_0$. The RDS associated with~\eqref{1.1}--\eqref{1.2} can be written as 
$$
\varphi_t^\omega(u_0)=\left\{
\begin{array}{cl}
z^\omega(t)+v^\omega(t)&\mbox{for $\omega\in\Omega_*$},\\[2pt]
0&\mbox{for $\omega\notin\Omega_*$}. 
\end{array}
\right.
$$
Then $\PPhi=\{\varphi_t,t\ge0\}$ is an RDS in the sense defined in the beginning of Section~\ref{s2.1}, and the time continuity and perfect co-cycle properties hold on~$\Omega_*$.

\section{Abstract results on exponential attractors}
\label{s3}
\subsection{Exponential attractor for discrete-time RDS}
\label{s3.1}
Let $H$ be a Hilbert space and let $\PPsi=\{\psi_k^\omega,k\in\Z_+\}$ be a discrete-time RDS in~$H$ over a group of measure-preserving transformations $\{\sigma_k\}$ acting on a complete probability space $(\Omega,\FF,\IP)$. We shall assume that~$\PPsi$ satisfies the following condition. 

\smallskip
\noindent
\begin{condition} \label{H}
There is a Hilbert space~$V$ compactly embedded in~$H$, a random compact set~$\{\aA_\omega\}$, and constants~$m,r>0$ such that the properties below are satisfied. 
\begin{description}
\item[\it Absorption.]
The family~$\{\aA_\omega\}$ is a random absorbing set for~$\PPsi$.  
\item[\it Stability.]
With probability~$1$, we have
\begin{equation} \label{3.1}
\psi_1^\omega\bigl(\OO_r(\aA_\omega)\bigr)\subset \aA_{\sigma_1\omega}.
\end{equation}
\item[\it Lipschitz continuity.]
There is an almost surely finite random variable $K_\omega\ge1$ such that $K^m\in L^1(\Omega,\IP)$ and 
\begin{equation} \label{3.2}
\|\psi_1^\omega(u_1)-\psi_1^\omega(u_2)\|_V\le K_\omega \|u_1-u_2\|_H
\quad\mbox{for $u_1,u_2\in \OO_r(\aA_\omega)$}. 
\end{equation}
\item[\it Kolmogorov $\e$-entropy.]
There is a constant~$C$ and an almost surely finite random variable $C_\omega$ such that $C_\omega K_\omega^m\in L^1(\Omega,\IP)$, 
\begin{align}
\HH_\e(V,H)&\le C\,\e^{-m},\label{3.3}\\
\HH_\e(\aA_\omega,H)&\le C_\omega\e^{-m}.\label{3.4}
\end{align}
\end{description}
\end{condition}

The following theorem is an analogue for RDS of a well-known result on the existence of an exponential attractor for deterministic dynamical systems; e.g., see Section~3 of the paper~\cite{MZ-2008} and the references therein. 

\begin{theorem} \label{t3.1}
Assume that the discrete-time RDS~$\PPsi$ satisfies Condition~\ref{H}. Then~$\PPsi$ possesses an exponential attractor~$\MM_\omega$. Moreover, the attraction property holds for the norm of~$V$:
\begin{equation} \label{3.15}
d_V\bigl(\psi_k^\omega(B),\MM_{\sigma_k\omega}\bigr)
\le C(B) e^{-\beta k} \quad\mbox{for $k\ge0$},
\end{equation}
where $B\subset H$ is an arbitrary ball and~$C(B)$ and~$\beta>0$ are some constants not depending on~$k$. 
\end{theorem}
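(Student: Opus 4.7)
The plan is to adapt the Eden--Foias--Nicolaescu--Temam construction, realising $\MM_\omega$ as the closure of a cocycle-compatible sequence of finite $\e_k$-nets inside the random absorbing set and controlling the random Lipschitz constant $K_\omega$ via Birkhoff's ergodic theorem. Since $\{\aA_\omega\}$ absorbs every ball $B\subset H$ in a deterministic number of steps $T(B)$, it suffices to construct a semi-invariant random compact set $\MM_\omega\subset\aA_\omega$ that attracts $\aA_\omega$ in the $V$-norm at a deterministic exponential rate; the bound~\eqref{3.15} for an arbitrary $B$ then follows with a constant $C(B)$ absorbing the finitely many pre-absorption iterates.

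Fix $\lambda\in(0,1)$ and $\e_0\le r$, and set $\e_k=\lambda^k\e_0$. Let $\EE_0^\omega\subset\aA_\omega$ be a minimal $\e_0$-net of $\aA_\omega$ in $H$, so that $\#\EE_0^\omega\le\exp(C_\omega\e_0^{-m})$ by~\eqref{3.4}, and define
\begin{equation*}
\EE_{k+1}^\omega:=\psi_1^{\sigma_{-1}\omega}(\EE_k^{\sigma_{-1}\omega})\cup F_{k+1}^\omega\subset\aA_\omega,
\end{equation*}
where the additional centres $F_{k+1}^\omega$ are produced as follows: by~\eqref{3.2}, each $H$-ball $B_H(u,\e_k)$ centred at $u\in\EE_k^{\sigma_{-1}\omega}$ is mapped by $\psi_1^{\sigma_{-1}\omega}$ into a $V$-ball of radius $K_{\sigma_{-1}\omega}\e_k$; by~\eqref{3.3} this $V$-ball admits an $H$-cover by at most $\exp\!\bigl(C(K_{\sigma_{-1}\omega}/\lambda)^m\bigr)$ balls of radius $\e_{k+1}$, whose centres (moved inside $\aA_\omega$ via~\eqref{3.1} when needed) form $F_{k+1}^\omega$. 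Iterating yields
\begin{equation*}
\log\#\EE_k^\omega\le C_{\sigma_{-k}\omega}\,\e_0^{-m}+C\lambda^{-m}\sum_{j=1}^{k}K_{\sigma_{-j}\omega}^m,
\end{equation*}
while measurability of $\omega\mapsto\EE_k^\omega$ is ensured by the random covering lemmas of the appendix.

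Set $\MM_\omega:=[\bigcup_{k\ge0}\EE_k^\omega]_H\subset\aA_\omega$. Semi-invariance $\psi_1^\omega(\MM_\omega)\subset\MM_{\sigma_1\omega}$ is built into the construction. For the attraction, given $u\in\aA_\omega$ pick $u_0\in\EE_0^\omega$ with $\|u-u_0\|_H\le\e_0$ and, inductively, once $u_k\in\EE_k^{\sigma_k\omega}$ with $\|\psi_k^\omega(u)-u_k\|_H\le\e_k$ has been chosen, let $u_{k+1}\in\EE_{k+1}^{\sigma_{k+1}\omega}$ be the centre of an $\e_{k+1}$-ball in the cover of $\psi_1^{\sigma_k\omega}(B_H(u_k,\e_k))$ containing $\psi_{k+1}^\omega(u)$; a single application of~\eqref{3.2} then yields $\|\psi_{k+1}^\omega(u)-\psi_1^{\sigma_k\omega}(u_k)\|_V\le K_{\sigma_k\omega}\e_k$ with $\psi_1^{\sigma_k\omega}(u_k)\in\MM_{\sigma_{k+1}\omega}$. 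Because $K^m\in L^1(\Omega,\IP)$, Markov's inequality and Borel--Cantelli give $K_{\sigma_k\omega}\le k^{2/m}$ eventually a.s., so for any deterministic $\beta\in(0,-\log\lambda)$ the bound~\eqref{3.15} holds with a random prefactor. Birkhoff's theorem applied to $K_{\sigma_{-j}\omega}^m$, combined with the cardinality estimate and the standard EFT entropy-to-dimension argument, then yields an a.s.\ finite fractal dimension $d_\omega$.

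The principal difficulty is that the per-step covering multiplicity $\exp(CK_{\sigma_{-1}\omega}^m/\lambda^m)$ and the initial entropy constant $C_\omega$ are random and only $L^1$-integrable in the $m$-th power, so the deterministic EFT bookkeeping breaks down; the integrability assumptions on $K_\omega$ and $C_\omega$ are used precisely to invoke Birkhoff's theorem, which yields the a.s.\ linear-in-$k$ bound on $\log\#\EE_k^\omega$ needed both for a deterministic attraction rate and for a finite random fractal dimension. Measurability of the constructed random nets is the remaining subtle point, and it is handled by the appendix.
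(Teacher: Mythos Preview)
Your construction follows the EFT scheme correctly at the level of semi-invariance and cardinality bookkeeping, but it delivers only a \emph{random} attraction constant, whereas the theorem (and Definition~\ref{d2.1}) demand that $C(B)$ be deterministic. The obstruction is structural: you fix the net radii $\e_k=\lambda^k\e_0$ deterministically and then read off $d_V(\psi_{k+1}^\omega(\aA_\omega),\MM_{\sigma_{k+1}\omega})\le K_{\sigma_k\omega}\e_k$. Since $(K_{\sigma_k\omega})_{k\ge0}$ is a stationary unbounded sequence, for any $\beta<-\log\lambda$ and any constant $C$ the event $\{K_{\sigma_k\omega}\lambda^k>Ce^{-\beta k}\}$ has the same positive probability for every $k$; Borel--Cantelli only gives control for $k\ge k_0(\omega)$ with $k_0$ random, which is exactly the random prefactor you acknowledge. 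The paper's remedy is to swap where the randomness sits: at step~$k$ one covers the current set in~$H$ by balls of the \emph{random} radius $\delta_\omega=(2^{k+1}K_{\sigma_k\omega})^{-1}r$, so that after one application of~$\psi_1$ the $V$-radius becomes the \emph{deterministic} number $2^{-(k+1)}r$. This yields $d_V(\psi_k^\omega(\aA_\omega),V_k(\sigma_k\omega))\le 2^{-k}r$ outright, whence~\eqref{3.15} with $\beta=\ln2$ and $C(B)=2^{T(B)}r$. The randomness is thereby pushed from the attraction estimate (where it is forbidden) into the covering multiplicity (where Birkhoff handles it in the dimension bound, which \emph{is} allowed to be random).

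A second point you sweep under ``the standard EFT entropy-to-dimension argument'' is genuinely delicate here. That argument needs the late nets to cover the early ones up to a geometrically small error; in the deterministic setting this is a geometric series in a fixed Lipschitz constant, but in your setting the relevant error is governed by products $\prod_{j=1}^l K_{\sigma_{-j}\omega}$. The paper isolates this as a separate lemma (Lemma~\ref{l3.2}), bounding $d_V\bigl(E_k(\omega),\psi_l^{\sigma_{-l}\omega}(\aA_{\sigma_{-l}\omega})\bigr)$ by $2^{2-(k-l)}r\prod_{j=1}^l K_{\sigma_{-j}\omega}$, and then optimises over~$l$ using the concavity of $\log$ together with Birkhoff; without this step one has not even shown that $\MM_\omega=[\bigcup_k\EE_k^\omega]_H$ is totally bounded.
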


The proof given below will imply that~\eqref{3.15} holds for $B=\aA_\omega$ with $C(B)=r$, and that in inequality~\eqref{3.15} the constant in front of~$e^{-\beta k}$ has the form
\begin{equation} \label{rtime}
C(B)=2^{T(B)}r,
\end{equation}
where $T(B)$ is a time after which the image of the ball~$B$ under the mapping $\psi_k^\omega$ belongs  to the absorbing set~$\aA_{\sigma_t\omega}$. Furthermore, as is explained in Remark~\ref{r3.3} below, under an additional assumption, the fractal dimension~$\dim_f(\MM_\omega)$ can be bounded by a deterministic constant. 

\begin{proof}
We repeat the scheme used in the case of deterministic dynamical systems. However, an essential difference is that we have a random parameter and need to follow the dependence on it. In addition, the constants entering various inequalities are now (unbounded) random variables, and we shall need to apply the Birkhoff ergodic theorem to bound some key quantities. 

\medskip
{\it Step~1: An auxiliary construction}. 
Let us define a sequence of random finite sets~$V_k(\omega)$ in the following way. Applying Lemma~\ref{l6.1} with $\delta_\omega=(2K_\omega)^{-1}r$ to the random compact set~$\aA_{\omega}$, we construct a random finite set $U_0(\omega)$ such that 
\begin{gather} 
d^s\bigl(\aA_{\omega},U_0(\omega)\bigr)\le \delta_\omega,\label{3.5}\\
\ln\bigl(\#U_0(\omega)\bigr)\le 2^mC_\omega\delta_\omega^{-m}
\le(4/r)^mC_\omega K_{\omega}^m. 
\label{3.6}
\end{gather}
Since $K_\omega\ge1$, we have $\delta_\omega\le r/2$, whence it follows that $U_0(\omega)\subset\OO_r(\aA_{\omega})$. Setting $V_1(\sigma_1\omega)=\psi_1^{\omega}(U_0(\omega))$, in view of~\eqref{3.1}, \eqref{3.2}, and~\eqref{3.5}, we obtain
$$
\psi_1^{\omega}(\aA_{\omega})\subset
\bigcup_{u\in V_1(\sigma_1\omega)}B_V(u,r/2)=:\CC_1(\omega), \quad
V_1(\sigma_1\omega)\subset\OO_{r/2}\bigl(\psi_1^\omega(\aA_\omega)\bigr)\cap\aA_{\sigma_1\omega}.
$$
Now note that~$\CC_1(\omega)$ is a random compact set in~$H$. Moreover, it follows from~\eqref{3.3} and~\eqref{3.6} that
\begin{align}
\HH_\e(\CC_1(\omega),H)
&\le \ln\bigl(\#V_1(\sigma_1\omega)\bigr)+\HH_{2\e/r}(V,H)\notag\\
&\le (4/r)^m C_\omega  K_{\omega}^m+(r/2)^mC\e^{-m}.
\label{3.7}
\end{align}
Applying Lemma~\ref{l6.1} with $\delta_\omega=(4K_{\sigma_{1}\omega})^{-1}r$ to~$\CC_1(\omega)$, we construct a random finite set $U_1(\omega)$ such that 
\begin{gather*} 
d^s\bigl(\CC_1(\omega),U_1(\omega)\bigr)\le \delta_\omega,\\
\ln\bigl(\#U_1(\omega)\bigr)\le \HH_{\delta_\omega/2}(\CC_1(\omega),H)
\le (4/r)^m C_\omega  K_{\omega}^m
+2^mC K_{\sigma_{1}\omega}^m. 
\end{gather*}
Repeating the above argument and setting $V_2(\sigma_2\omega)=\psi_1^{\sigma_{1}\omega}(U_1(\omega))$, we obtain
\begin{align*}
\psi_1^{\sigma_{1}\omega}\bigl(\CC_1(\omega)\bigr)&\subset
\bigcup_{u\in V_2(\sigma_2\omega)}B_V(u,r/4)=:\CC_2(\omega),\\
V_2(\sigma_2\omega)&\subset\OO_{r/4}\bigl(\psi_1^{\sigma_1\omega}(\CC_1(\omega)\bigr)\cap\aA_{\sigma_2\omega}.
\end{align*}
Moreover, $\CC_2(\omega)$ is a random compact set~$H$ whose $\e$-entropy satisfies the inequality (cf.~\eqref{3.7})
\begin{align*}
\HH_\e(\CC_2(\omega),H)
&\le \ln\bigl(\#V_2(\sigma_2\omega)\bigr)+\HH_{4\e/r}(V,H)\notag\\
&\le (4/r)^m C_\omega  K_{\omega}^m
+2^mC K_{\sigma_{1}\omega}^m+(r/4)^mC\e^{-m}.
\end{align*}
Iterating this procedure and recalling that $\sigma_k:\Omega\to\Omega$ is a one-to-one transformation, we construct random finite sets $V_k(\omega)$, $k\ge1$, and unions of balls
$$
\CC_k(\omega):=\bigcup_{u\in V_k(\sigma_k\omega)}B_V(u,2^{-k}r)
$$ 
such that the following properties hold for any integer~$k\ge1$: 
\begin{gather}
\psi_k^{\omega}(\aA_{\omega})\subset \CC_k(\omega),
\label{3.8}\\
V_k(\omega)\subset\OO_{2^{1-k}r}\bigl(\psi_1^{\sigma_{-1}\omega}(\CC_{k-1}(\sigma_{1-k}\omega))\bigr)\cap\aA_{\omega}, 
\label{3.13}\\
\ln\bigl(\#V_k(\omega)\bigr)
\le(4/r)^m C_{\sigma_{-k}\omega}  K_{\sigma_{-k}\omega}^m
+2^mC\sum_{j=1}^{k-1} K_{\sigma_{j-k}\omega}^m. 
\label{3.9}
\end{gather}

\smallskip
{\it Step~2: Description of an attractor}. 
Let us define a sequence of random finite sets by the rule
$$
E_1(\omega)=V_1(\omega), \qquad E_k(\omega)=V_k(\omega)\cup\psi_1^{\sigma_{-1}\omega}\bigl(E_{k-1}(\sigma_{-1}\omega)\bigr), \quad k\ge2. 
$$
The very definition of~$E_k$ implies that 
\begin{equation} \label{3.11}
\psi_1^\omega(E_k(\omega))\subset E_{k+1}(\sigma_1\omega).
\end{equation}
and since $\#V_{k}(\omega)\le \#V_{k+1}(\sigma_1\omega)$, it follows from~\eqref{3.9} that 
\begin{align}
\ln\bigl(\#E_k(\omega)\bigr)
&\le \ln k+\ln\bigl(\#V_{k}(\omega)\bigr)\notag\\
&\le \ln k+(4/r)^m C_{\sigma_{-k}\omega}  K_{\sigma_{-k}\omega}^m
+2^mC\sum_{j=1}^{k-1} K_{\sigma_{j-k}\omega}^m. 
\label{3.10}
\end{align}
Furthermore, it follows from~\eqref{3.8} that
\begin{equation} \label{3.12}
d_V\bigl(\psi_k^\omega(\aA_\omega),V_k(\sigma_k\omega)\bigr)\le 2^{-k}r, \quad k\ge0.
\end{equation}
We now define a random compact set~$\MM_\omega$ by the formulas
\begin{equation} \label{3.016}
\MM_\omega=\bigl[\MM_\omega'\bigr]_V\,, \quad
\MM_\omega'=\bigcup_{k=1}^\infty E_k(\omega). 
\end{equation}
We claim that~$\MM_\omega$ is a random exponential attractor for~$\PPsi$. Indeed, the semi-invariance follows immediately from~\eqref{3.11}. Furthermore, inequality~\eqref{3.12} implies that 
$$
d_V\bigl(\psi_k^\omega(\aA_\omega),\MM_{\sigma_k\omega}\bigr)\le 2^{-k}r
\quad\mbox{for any $k\ge0$}. 
$$
Recalling that~$\aA_\omega$ is an absorbing set and using inclusion~\eqref{2.4}, together with the co-cycle property, we obtain
$$
d_V\bigl(\psi_k^\omega(B),\MM_{\sigma_k\omega}\bigr)
\le d_V\bigl(\psi_{k-T}^{\sigma_T\omega}(\aA_{\sigma_T\omega}), \MM_{\sigma_{k-T}(\sigma_T\omega)}\bigr)\le 2^{T-k}r,
$$
where $T=T(B)$ is the constant entering~\eqref{2.4}. This implies the exponential attraction inequality~\eqref{3.15} with $\beta=\ln 2$ and $C(B)=2^{T(B)}r$. It remains to prove that~$\MM_\omega$ has a finite  fractal dimension. This is done in the next step. 

\smallskip
{\it Step~3: Estimation of the fractal dimension}. We shall need the following lemma, whose proof is given at the end of this subsection. 

\begin{lemma} \label{l3.2}
Under the hypotheses of Theorem~\ref{t3.1}, for any integers $l\ge0$, $k\in\Z$, and $m\in[0,l]$, we have 
\begin{equation} \label{3.14}
d_V\bigl(E_k(\sigma_k\omega),\psi_m^{\sigma_{k-m}\omega}(\aA_{\sigma_{k-m}\omega})\bigr)\le 2^{2-(k-l)}r\prod_{j=1}^{l}K_{\sigma_{k-j}\omega}.
\end{equation}
\end{lemma}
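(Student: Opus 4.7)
The plan is to reduce to the case $m=l$ and then to approximate every point $p\in E_k(\sigma_k\omega)$ by a trajectory issued from the absorbing set $\aA_{\sigma_{k-l}\omega}$, iterating the two-sided structure of the covers $\CC_j$ and controlling the accumulated error through a geometric sum. The reduction is immediate from the co-cycle identity $\psi_l^{\sigma_{k-l}\omega}=\psi_m^{\sigma_{k-m}\omega}\circ\psi_{l-m}^{\sigma_{k-l}\omega}$ and the semi-invariance of $\aA_\omega$ (a consequence of~\eqref{3.1}), which yield $\psi_l^{\sigma_{k-l}\omega}(\aA_{\sigma_{k-l}\omega})\subset\psi_m^{\sigma_{k-m}\omega}(\aA_{\sigma_{k-m}\omega})$; the asymmetric distance $d_V(E_k(\sigma_k\omega),\cdot)$ is monotone decreasing in the second argument under set inclusion.

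Unrolling the recursion defining $E_k$ gives $E_k(\sigma_k\omega)=\bigcup_{i=1}^{k}\psi_{k-i}^{\sigma_i\omega}(V_i(\sigma_i\omega))$, so every $p\in E_k(\sigma_k\omega)$ may be written as $p=\psi_{k-i}^{\sigma_i\omega}(v_i)$ with $v_i\in V_i(\sigma_i\omega)$ for some $i\in\{1,\ldots,k\}$. If $i\le k-l$, then co-cycle and semi-invariance place $p$ inside $\psi_l^{\sigma_{k-l}\omega}(\aA_{\sigma_{k-l}\omega})$, so the distance is zero and the bound holds trivially. For $i>k-l$, I set $s:=i-(k-l)\in\{1,\ldots,l\}$ and build a chain $v_{i-1}\in V_{i-1}(\sigma_{i-1}\omega),\ldots,v_{k-l}\in V_{k-l}(\sigma_{k-l}\omega)$ satisfying
\[
\|v_j-\psi_1^{\sigma_{j-1}\omega}(v_{j-1})\|_V\le 2^{-j}r+K_{\sigma_{j-1}\omega}\,2^{-(j-1)}r
\]
at each level $j=i,i-1,\ldots,k-l+1$. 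Each such step combines the inclusion $V_j(\sigma_j\omega)\subset \OO_{2^{-j}r}(\psi_1^{\sigma_{j-1}\omega}(\CC_{j-1}(\omega)))$, a shifted form of~\eqref{3.13}, with the ball decomposition of $\CC_{j-1}(\omega)$ around $V_{j-1}(\sigma_{j-1}\omega)$ and the Lipschitz bound~\eqref{3.2} applied on $\CC_{j-1}(\omega)\subset\OO_r(\aA_{\sigma_{j-1}\omega})$.

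Telescoping through iterated Lipschitz, the cumulative error is dominated, up to a constant, by its last term, giving $\|v_i-\psi_s^{\sigma_{k-l}\omega}(v_{k-l})\|_V\le 4\cdot 2^{-(k-l)}r\prod_{a=1}^{s}K_{\sigma_{i-a}\omega}$. Defining $q:=\psi_l^{\sigma_{k-l}\omega}(v_{k-l})\in\psi_l^{\sigma_{k-l}\omega}(\aA_{\sigma_{k-l}\omega})$, which equals $\psi_{k-i}^{\sigma_i\omega}\circ\psi_s^{\sigma_{k-l}\omega}(v_{k-l})$ by the co-cycle identity, and applying iterated Lipschitz a second time to $\psi_{k-i}^{\sigma_i\omega}$ (contributing the factor $\prod_{j=1}^{k-i}K_{\sigma_{k-j}\omega}$), the two partial products merge into $\{1,\ldots,l\}$ and we obtain $\|p-q\|_V\le 2^{2-(k-l)}r\prod_{j=1}^{l}K_{\sigma_{k-j}\omega}$, as claimed.

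I expect the main difficulty to be bookkeeping: tracking the shifted $\omega$-arguments through each composition $\sigma_a\sigma_b=\sigma_{a+b}$, identifying precisely the range of indices covered by the Lipschitz factors at every stage, and verifying that the geometric decay $2^{-j}$ at deeper net levels exactly compensates the random multiplicative growth of the $K$-product. The iterated Lipschitz step also tacitly uses that the embedding $V\hookrightarrow H$ has operator norm at most~$1$, which can be arranged by rescaling the $V$-norm.
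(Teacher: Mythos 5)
Your proof is correct and follows essentially the same route as the paper's: reduce to $m=l$ via the cocycle and stability properties, decompose $E_k(\sigma_k\omega)$ into the pieces $\psi_{k-i}^{\sigma_i\omega}(V_i(\sigma_i\omega))$, and approximate each piece in $\psi_l^{\sigma_{k-l}\omega}(\aA_{\sigma_{k-l}\omega})$ by combining the one-step net estimate with iterated Lipschitz bounds and a geometric sum. The paper organises the telescoping set-wise via inequality~\eqref{3.25} rather than point-wise along a chain, but the argument is the same.
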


Inequality~\eqref{3.14} with $m=l$ and~$\omega$ replaced by~$\sigma_{-k}\omega$ implies that 
\begin{equation} \label{3.16}
d_V\bigl(E_k(\omega),\psi_{l}^{\sigma_{-l}\omega}(\aA_{\sigma_{-l}\omega})\bigr)\le 2^{2-(k-l)}r\prod_{j=1}^{l}K_{\sigma_{-j}\omega},
\end{equation}
where $k\ge1$ is arbitrary. On the other hand, in view of~\eqref{3.12} with $k=l$ and $\omega$ replaced by~$\sigma_{-l}\omega$, we have
$$
d_V\bigl(\psi_{l}^{\sigma_{-l}\omega}(\aA_{\sigma_{-l}\omega}),V_l(\omega)\bigr)\le 2^{-l}r. 
$$
Combining this with~\eqref{3.16}, we obtain
\begin{equation} \label{3.17}
d_V\biggl(\,\bigcup_{k\ge n}E_k(\omega),V_l(\omega)\biggr)
\le r\Bigl(2^{-l}+2^{2-(n-l)}\prod_{j=1}^{l}K_{\sigma_{-j}\omega}\Bigr),
\end{equation}
where $n\ge1$ and $l\in[1,n]$ are arbitrary integers. Since $\{E_k(\omega)\}$ is an increasing sequence and $V_l(\omega)\subset E_l(\omega)\subset\MM_\omega$ for any $l\ge1$, inequality~\eqref{3.17} implies that
\begin{equation} \label{3.18}
d_V^s\biggl(\MM_\omega,\bigcup_{l=1}^n V_l(\omega)\biggr)
\le r\inf_{l\in[1,n]} 
\Bigl(2^{-l}+2^{2-(n-l)}\prod_{j=1}^{l}K_{\sigma_{-j}\omega}\Bigr)=:\e_n(\omega),
\end{equation}
where $n\ge1$ is arbitrary. If we denote by $N_\e(\omega)$ the minimal number of balls of radius~$\e>0$ that are needed to cover~$\MM_\omega$, then inequality~\eqref{3.18} implies that $N_{\e_n(\omega)}(\omega)\le \sum_{k=1}^n\#V_k(\omega)$. Since $V_k\subset E_k$ and $\#V_k(\omega)\ge \#V_{k-1}(\sigma_{-1}\omega)$, it follows from~\eqref{3.9} that 
\begin{align}
\ln N_{\e_n(\omega)}(\omega)
&\le \ln \bigl(n\,\#E_n(\omega)\bigr)\le\ln \bigl(n^2\,\#V_n(\omega)\bigr)\notag\\
&\le 2\ln n+(4/r)^m C_{\sigma_{-n}\omega} K_{\sigma_{-n}\omega}^m
+ 2^m C\sum_{k=1}^{n-1} K_{\sigma_{-k}\omega}^m.\label{3.19}
\end{align}
Since $K^m\in L^1(\Omega,\IP)$, by the Birkhoff ergodic theorem (see Section~1.6 in~\cite{walters1982}), we have
\begin{equation} \label{3.20}
\lim_{n\to\infty}\frac1n\sum_{k=1}^n K_{\sigma_{-k}\omega}^m= \xi_\omega,
\end{equation}
where $\xi_\omega$ is an integrable random variable. This implies, in particular, that $n^{-1} K_{\sigma_{-n}\omega}^m\to0$ as $n\to\infty$. By a similar argument,  $n^{-1} C_{\sigma_{-n}\omega}K_{\sigma_{-n}\omega}^m\to0$ as $n\to\infty$. Combining this with~\eqref{3.20} and~\eqref{3.19}, we derive
\begin{equation} \label{3.21}
\ln N_{\e_n(\omega)}(\omega)\le 2^mC \xi_\omega n+o_\omega(n),
\end{equation}
where, given $\alpha\in\R$, we denote by~$o_\omega(n^\alpha)$ any sequences of positive random variables such that $n^{-\alpha}o_\omega(n^\alpha)\to0$ a.\,s.\ as $n\to\infty$. On the other hand, since the function~$\log_2 x$ is concave, it follows from~\eqref{3.20} that 
\begin{equation} \label{3.024}
\frac{m}{l}\sum_{j=1}^l\log_2K_{\sigma_{-j}\omega}\le 
\log_2\Bigl(\frac1l\sum_{j=1}^l K_{\sigma_{-j}\omega}^m\Bigr)
=\log_2\bigl(\xi_\omega+o_\omega(1)\bigr),
\end{equation}
whence we conclude that the random variable~$\e_n$ defined in~\eqref{3.18} satisfies the inequality
$$
\e_n(\omega)\le r\inf_{l\in[1,n]}
\bigl(2^{-l}+4\cdot 2^{-(n-l)}(\xi_\omega+o_\omega(1))^{l/m}\bigr). 
$$
Taking $l=mn\bigl(2m+\log_2(\xi_\omega+o_\omega(1))\bigr)^{-1}$, we obtain
\begin{equation} \label{eps}
\e_n(\omega)
\le 5\exp\Bigl(-\frac{mn\ln2}{2m+\log_2(\xi_\omega+o_\omega(1))}\Bigr). 
\end{equation}
Combining this inequality with~\eqref{3.21}, we derive
$$
\lim_{n\to\infty}\frac{\ln N_{\e_n(\omega)}(\omega)}{\ln\e_n^{-1}(\omega)}
\le \frac{2^mC\xi_\omega(\ln\xi_\omega+2m)}{m\ln 2}=:d_\omega.
$$
It is now straightforward to see 
\begin{equation} \label{3.22}
\dim_f(\MM_\omega)=\limsup_{\e\to0^+}\frac{\ln N_\e}{\ln\e^{-1}}
\le d_\omega.
\end{equation}
The proof of the theorem is complete. 
\end{proof}

\begin{remark} \label{r3.3}
It follows from~\eqref{3.22} that if the random variable~$\xi_\omega$ entering the Birkhoff theorem is bounded (see~\eqref{3.20}), then the fractal dimension of~$\MM_\omega$ can be bounded by a deterministic constant. For instance, if the group of shift operators~$\{\sigma_k\}$ is ergodic, then~$\xi_\omega$ is constant, and the conclusion holds. This observation will be important in applications of Theorem~\ref{t3.1}. 
\end{remark}

\begin{proof}[Proof of Lemma~\ref{l3.2}]
The co-cycle property~\eqref{2.1} and inclusion~\eqref{3.1} imply that $\psi_m^{\sigma_{k-m}\omega}(\aA_{\sigma_{k-m}\omega})\supset \psi_l^{\sigma_{k-l}\omega}(\aA_{\sigma_{k-l}\omega})$ for $m\le l$. Hence, it suffices to establish~\eqref{3.14} for $m=l$.

We first note that inequality~\eqref{3.2}, inclusion~\eqref{3.13}, and the definition of~$\CC_k(\omega)$ imply that\,\footnote{In the case $n=1$, the left-hand side of this inequality is zero.}
$$
d_V\bigl(V_n(\omega),\psi_1^{\sigma_{-1}\omega}(V_{n-1}(\sigma_{-1}\omega)\bigr)\le 2^{2-n}rK_{\sigma_{-1}\omega},
$$
where $n\ge1$ is an arbitrary integer, and we set $V_0(\omega)=U_0(\omega)$. Combining this with~\eqref{3.2} and the co-cycle property, for any integers $n\ge1$ and $q\ge0$ we  derive
\begin{equation} \label{3.25}
d_V\bigl(\psi_q^\omega(V_n(\omega)),\psi_{q+1}^{\sigma_{-1}\omega}(V_{n-1}(\sigma_{-1}\omega)\bigr)
\le 2^{2-n}r\prod_{j=0}^qK_{\sigma_{j-1}\omega}.
\end{equation}
Applying~\eqref{3.25} to the pairs $(n,q)=(k-i,i)$, $i=0,\dots,l-1$, with~$\omega$ replaced by~$\sigma_{k-i}\omega$, using the triangle inequality, and recalling that $K_\omega\ge1$, we obtain
\begin{align*} 
d_V\bigl(V_k(\sigma_k\omega),\psi_{l}^{\sigma_{k-l}\omega}(V_{k-l}(\sigma_{k-l}\omega))\bigr)
&\le r\sum_{i=0}^{l-1} 2^{2-k+i}\prod_{j=0}^i K_{\sigma_{k-i+j-1}\omega}\\
&\le 2^{2-(k-l)} r\prod_{j=1}^{l}K_{\sigma_{k-j}\omega}\,,
\end{align*}
where $k\ge1$ and $p\in[1,k]$ are arbitrary integers. A similar argument based on the application of~\eqref{3.25} to the pairs $(n,q)=(k-s-i,s+i)$, $i=0,\dots, l-s-1$, with~$\omega$ replaced by~$\sigma_{k-s}\omega$, enables one to prove that for any integer $n\in[1,k]$ we have 
\begin{equation}
d_V\bigl(\psi_s^{\sigma_{k-s}\omega}(V_{k-s}(\sigma_{k-s}\omega)),\psi_{l}^{\sigma_{k-l}\omega}(V_{k-l}(\sigma_{k-l}\omega))\bigr)
\le2^{2-(k-l)} r\prod_{j=1}^{l}K_{\sigma_{k-j}\omega}\,,
\label{3.26}
\end{equation}
where $s\in[0,l-1]$ is an arbitrary integer. Recalling that $V_n(\omega)\subset\aA_\omega$ for any $n\ge1$ (see~\eqref{3.13}), we deduce from~\eqref{3.26} that
\begin{equation}
d_V\bigl(\psi_s^{\sigma_{k-s}\omega}(V_{k-s}(\sigma_{k-s}\omega)),\psi_{l}^{\sigma_{k-l}\omega}(\aA_{\sigma_{k-l}\omega})\bigr)
\le2^{2-(k-l)} r\prod_{j=1}^{l}K_{\sigma_{k-j}\omega}
\label{3.27}
\end{equation}
for any integer $s\in[0,k-1]$. Since 
$$
E_k(\sigma_k\omega)
=\bigcup_{s=0}^{k-1}
\psi_{s}^{\sigma_{k-s}\omega}(V_{k-s}(\sigma_{k-s}\omega)),
$$
inequality~\eqref{3.27} immediately implies~\eqref{3.14} with $m=l$. 
\end{proof}

\subsection{Dependence of attractors on a parameter}
\label{s3.2}
We now turn to the case in which the RDS in question depends on a parameter. Namely, let $Y\subset\R$ and $\TT\subset\R$ be bounded closed intervals. We consider a discrete-time RDS $\PPsi^y=\{\psi_k^{y,\omega}:H\to H, k\ge0\}$ depending on the parameter $y\in Y$ and a family of measurable isomorphisms $\{\theta_\tau:\Omega\to\Omega,\tau\in\TT\}$. We assume that~$\theta_\tau$ commutes with~$\sigma_1$ for any $\tau\in\TT$, and the following uniform version of Condition~\ref{H} is satisfied. 

\smallskip
\noindent
\begin{condition}\label{UH} 
There is a Hilbert space~$V$ compactly embedded in~$H$, almost surely finite random variables $R_\omega^y, R_\omega\ge0$, and positive constants~$m$, $r$, and $\alpha\le1$ such that $R_\omega^y\le R_\omega$ for all $y\in Y$, and the  following properties hold.
\begin{description}
\item[\it Absorption and continuity.]
For any ball $B\subset H$ there is a time $T(B)\ge0$ such that  
\begin{equation} \label{3.027}
\psi_k^{y,\theta_\tau\omega}(B)\subset \aA_\omega^y\quad \mbox{for $k\ge T(B)$, $y\in Y$, $\tau\in\TT$, $\omega\in\Omega$},
\end{equation}
where we set $\aA_\omega^y=B_V(R_\omega^y)$. 
Moreover, there is an integrable  random variable~$L_\omega\ge1$  such that
\begin{equation} \label{3.28}
|R_{\theta_{\tau_1}\omega}^{y_1}-R_{\theta_{\tau_2}\omega}^{y_2}|
\le L_\omega\bigl(|y_1-y_2|^\alpha+|\tau_1-\tau_2|^\alpha\bigr)
\end{equation}
for $y_1,y_2\in Y$, $\tau_1,\tau_2\in\TT$, and $\omega\in\Omega$.
\item[\it Stability.]
With probability~$1$, we have
\begin{equation} \label{3.29}
\psi_1^{y,\omega}\bigl(\OO_r(\aA_\omega^y)\bigr)
\subset \aA_{\sigma_1\omega}^y\quad\mbox{for $y\in Y$}.
\end{equation}
\item[\it H\"older continuity.]
There are almost surely finite random variables $K_\omega^y,K_\omega\ge1$ such that $K_\omega^y\le K_\omega$ for all $y\in Y$, $(RK)^m\in L^1(\Omega,\IP)$, and \begin{equation} \label{3.30}
\|\psi_1^{y_1,\theta_{\tau_1}\omega}(u_1)-\psi_1^{y_2,\theta_{\tau_2}\omega}(u_2)\|_V
\le K_\omega^{y_1,y_2} \bigl(|y_1-y_2|^\alpha+|\tau_1-\tau_2|^\alpha+\|u_1-u_2\|_H\bigr) 
\end{equation}
for $y_1,y_2\in Y$, $\tau_1,\tau_2\in\TT$, $u_1,u_2\in \OO_r(\aA_\omega^{y_1}\cup \aA_\omega^{y_2})$, and $\omega\in\Omega$, where we set $K_\omega^{y_1,y_2}=\max(K_\omega^{y_1},K_\omega^{y_2})$.
\item[\it Kolmogorov $\e$-entropy.]
Inequalities~\eqref{3.3} holds with some~$C$ not depending on~$\e$. 
\end{description}
\end{condition}

In particular, for any fixed $y\in Y$, the RDS $\PPsi^y$ satisfies Condition~\ref{H} and, hence, possesses an exponential attractor~$\MM_\omega^y$. The following result is a refinement of Theorem~\ref{t3.1}.

\begin{theorem} \label{t3.3}
Let  $\PPsi^y$ be a family of RDS satisfying Condition~\ref{UH}. Then there is a random compact set $(y,\omega)\mapsto\MM_\omega^y$ with the underlying space~$Y\times \Omega$ and a set of full measure $\Omega_*\in\FF$ such that the following properties hold. 

\smallskip
{\bf Attraction.} 
For any $y\in Y$, the family~$\{\MM_\omega^y\}$ is a random exponential attractor for $\PPsi^y$. Moreover, the attraction property holds uniformly in~$y$ and~$\omega$ in the following sense: for any ball $B\subset H$ there is $C(B)>0$ such that
\begin{equation} \label{3.033}
\sup_{y\in Y}d_V\bigl(\psi_k^{y,\omega}(B),\MM_{\sigma_k\omega}^y\bigr)
\le C(B) e^{-\beta k} \quad\mbox{for $k\ge0$, $\omega\in\Omega_*$},
\end{equation}
where $\beta>0$ is a constant not depending on~$B$, $k$, $y$, and~$\omega$. 

\smallskip
{\bf H\"older continuity.}
There are finite random variables~$P_\omega$ and $\gamma_\omega\in(0,1]$ such that
\begin{equation} \label{3.31}
d_V^s(\MM_{\omega}^{y_1},\MM_{\omega}^{y_2})
\le P_\omega |y_1-y_2|^{\gamma_\omega}
\quad\mbox{for $y_1,y_2\in Y$, $\omega\in\Omega_*$}.
\end{equation}
If, in addition, the random variable~$\xi_\omega$ entering~\eqref{3.20} is bounded, then~$\gamma_\omega$ can be chosen to be constant, and we have the inequality
\begin{equation} \label{3.031}
d_V^s(\MM_{\theta_{\tau_1}\omega}^{y},\MM_{\theta_{\tau_2}\omega}^{y})
\le Q_\omega |\tau_1-\tau_2|^\gamma\quad
\mbox{for $y\in Y$, $\tau_1,\tau_2\in\TT$, $\omega\in\Omega_*$},
\end{equation}
where $\gamma\in(0,1]$, and $Q_\omega$ is a finite random constant. 
\end{theorem}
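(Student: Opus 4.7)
The plan is to reproduce the construction in the proof of Theorem~\ref{t3.1} with the parameter~$y$ treated uniformly, and then to compare the resulting attractors at nearby values of~$y$ or time-shifts~$\tau$ via a dynamical argument based on~\eqref{3.30}.

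\textbf{Uniform construction.} Since $R_\omega^y\le R_\omega$ and $K_\omega^y\le K_\omega$ with both dominating variables almost surely finite and $(RK)^m\in L^1(\Omega,\IP)$, I would run the argument of Theorem~\ref{t3.1} for each fixed $y$ using the absorbing ball $\aA_\omega^y=B_V(R_\omega^y)$, but replacing, whenever the estimates need to hold uniformly, the $y$-dependent quantities by the dominating variables $R_\omega$ and~$K_\omega$. The nets $V_k^y(\omega)$, $E_k^y(\omega)$ and the finite unions of balls $\CC_k^y(\omega)$ then inherit all the estimates~\eqref{3.8}--\eqref{eps} with $\omega$-dependent constants uniform in~$y$. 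Setting
$$
\MM_\omega^y=\Bigl[\,\bigcup_{k\ge1}E_k^y(\omega)\Bigr]_V,
$$
semi-invariance, finite-dimensionality, and the uniform attraction~\eqref{3.033} follow as in Theorem~\ref{t3.1}, where uniformity in~$y$ and~$\tau$ of the absorbing time comes from~\eqref{3.027}. Joint measurability in $(y,\omega)$ is ensured by selecting the finite nets of Lemma~\ref{l6.1} measurably on $Y\times\Omega$.

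\textbf{Hölder continuity in $y$.} For $y_1,y_2\in Y$ and $\omega\in\Omega_*$, I would compare the two dynamics on the common absorbing set. Iterating~\eqref{3.30} with $\tau_1=\tau_2$ and $u_1=u_2$ yields
$$
\|\psi_k^{y_1,\sigma_{-k}\omega}(a)-\psi_k^{y_2,\sigma_{-k}\omega}(a)\|_V
\le k\,|y_1-y_2|^\alpha\,\prod_{j=1}^{k}K_{\sigma_{-j}\omega}
$$
for any $a$ in the absorbing sets. By the Birkhoff theorem applied to $K^m$ and the concavity argument in~\eqref{3.024}, the product is bounded by $\exp(\kappa_\omega k)$ for some almost surely finite $\kappa_\omega>0$ depending only on the Birkhoff average $\xi_\omega$. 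On the other hand, any $u\in\MM_\omega^{y_1}$ is within $r\cdot 2^{-k}$ of some $\psi_k^{y_1,\sigma_{-k}\omega}(a)$ by~\eqref{3.12}, and $\psi_k^{y_2,\sigma_{-k}\omega}(a)$ lies within $r\cdot 2^{-k}$ of $\MM_\omega^{y_2}$ by the uniform attraction property. Combining the three estimates and balancing $k$ so that $2^{-k}\sim|y_1-y_2|^\alpha e^{\kappa_\omega k}$ yields~\eqref{3.31} with Hölder exponent $\gamma_\omega=\alpha\ln 2/(\ln 2+\kappa_\omega)$ and a finite random constant $P_\omega$. Reversing the roles of $y_1$ and $y_2$ provides the symmetric bound.

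\textbf{Time continuity and main obstacle.} Inequality~\eqref{3.031} is obtained by the same scheme with $y_1=y_2$ and different $\tau_i$, using~\eqref{3.28} to control the change of the absorbing radius and~\eqref{3.30} to propagate the $\tau$-shift through the iteration. When $\xi_\omega$ is bounded---for instance in the ergodic case---the constant $\kappa_\omega$ becomes deterministic, and hence so does the exponent~$\gamma$ in~\eqref{3.031}; this is precisely the case where a uniform-in-$\tau$ bound is meaningful. The main technical difficulty is the balancing in the previous step: the comparison of iterated maps loses a factor $e^{\kappa_\omega k}$ per iteration, so only a logarithmic choice of~$k$ in terms of $|y_1-y_2|^{-\alpha}$ produces a nontrivial exponent, and this is exactly what forces $\gamma_\omega$ to depend on the Birkhoff average of~$\log K$ rather than equalling~$\alpha$. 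Everything else---uniformity in~$y$ of the estimates of Theorem~\ref{t3.1} and joint measurability of the construction---is routine once the dominating variables $R_\omega$ and~$K_\omega$ from Condition~\ref{UH} are used in place of their $y$-dependent versions.
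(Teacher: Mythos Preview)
Your overall strategy is sound, but the H\"older-continuity argument has a genuine gap. You claim that ``any $u\in\MM_\omega^{y_1}$ is within $r\cdot 2^{-k}$ of some $\psi_k^{y_1,\sigma_{-k}\omega}(a)$ by~\eqref{3.12}.'' This is not what~\eqref{3.12} says: that inequality bounds $d_V\bigl(\psi_k^\omega(\aA_\omega),V_k(\sigma_k\omega)\bigr)$, i.e.\ the distance \emph{from} the pullback image \emph{to} the net, not the other way around. The exponential attractor is only semi-invariant, so unlike a global attractor it is \emph{not} contained in (or $2^{-k}r$-close to) the pullback images $\psi_k^{y_1,\sigma_{-k}\omega}(\aA_{\sigma_{-k}\omega}^{y_1})$. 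Controlling the reverse distance is precisely the content of Lemma~\ref{l3.2} and inequality~\eqref{3.18}, where the error is~$\e_n(\omega)$ (already involving products of the~$K_{\sigma_{-j}\omega}$), not $r\cdot 2^{-k}$. Once you substitute $\e_n(\omega)$ for $r\cdot 2^{-k}$ your balancing still works and yields the same exponent structure, but the step as written is unjustified.

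The paper proceeds differently: rather than comparing iterated trajectories, it compares the finite nets $V_k^{y_1}(\omega)$ and $V_k^{y_2}(\omega)$ directly, proving by induction that $d_V^s\bigl(V_k^{y_1}(\omega),V_k^{y_2}(\omega)\bigr)\le|y_1-y_2|^\alpha\sum_{j=1}^k\prod_{i=1}^j K_{\sigma_{-i}\omega}$, and then combines this with~\eqref{3.18}. For this induction to go through, the coverings at nearby~$y$ must themselves be close, which is why the paper replaces Lemma~\ref{l6.1} by the parameter-stable constructions of Corollary~\ref{c5.3} and Lemma~\ref{l5.4}; your remark that ``joint measurability is ensured by selecting the finite nets of Lemma~\ref{l6.1} measurably'' misses this point---measurability alone does not give Lipschitz dependence of $U_k^y$ on~$y$. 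The same issue is more acute for~\eqref{3.031}: the initial net $U_0^y(\omega)$ depends on~$\omega$ through $R_{\theta_\tau\omega}^y$, and controlling its variation in~$\tau$ requires~\eqref{5.016} together with~\eqref{3.28}, which your sketch does not address.
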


In addition, it can be shown that all the moments of the random variables~$P_\omega$ and~$Q_\omega$ are finite. The proof of this property requires some estimates for the rate of convergence in the Birkhoff ergodic theorem. Those estimates can be derived from exponential bounds for the time averages of some  norms of solutions. Since the corresponding argument is technically rather complicated, we shall confine ourselves to the proof of the result stated above.
\begin{proof}[Proof of Theorem~\ref{t3.3}]
To establish the first assertion, we repeat the scheme used in the proof of Theorem~\ref{t3.1}, applying Corollary~\ref{c5.3} and Lemma~\ref{l5.4} instead of Lemma~\ref{l6.1} to construct coverings of random compact sets. Namely, let us denote by 
$U_k^y(\omega)$, $V_k^y(\omega)$, and $\CC_k^y(\omega)$ with $y\in Y$ the random sets described in the proof of Theorem~\ref{t3.1} for the RDS~$\PPsi^y$. In particular, $U_k^y(\omega)$ is a random finite set such that
\begin{equation} \label{3.32}
d^s\bigl(U_k^y(\omega),\CC_k^y(\omega)\bigr)
\le \bigl(2^{k+1}K_{\sigma_k\omega}\bigr)^{-1}r,\quad k\ge0,
\end{equation}
where $\CC_0^y(\omega)=\aA_\omega^y$ and 
\begin{equation} \label{3.33}
\CC_k^y(\omega)=\bigcup_{u\in V_k^y(\sigma_k\omega)}B_V(u,2^{-k}r),\quad V_k^y(\sigma_k\omega)=\psi_1^{y,\sigma_{k-1}\omega}
\bigl(U_{k-1}^y(\omega)\bigr)
\end{equation}
for $k\ge1$. We apply Corollary~\ref{c5.3} to construct a random finite set $R\mapsto U_{0,R}$ satisfying \eqref{5.014}--\eqref{5.016} with $\delta=\frac{r}{2K_\omega^y}$ and then define $U_0^y(\omega):=U_{0,R_\omega^y}$. The subsequent sets~$U_k^y(\omega)$, $k\ge1$, are constructed with the help of Lemma~\ref{l5.4}. What has been said implies the following bound for the number of elements of~$U_k^y(\omega)$ (cf.~\eqref{3.9}): 
$$
\ln\bigl(\#U_k^y(\omega)\bigr)
\le 4\bigl(\tfrac{32}{r}\bigr)^mC_\omega^y K_\omega^m
+4^m C\sum_{j=1}^kK_{\sigma_j\omega}^m,
$$
where $C_\omega^y=C(R_\omega^y)^m$. 
This enables one to repeat the argument of the proof of Theorem~\ref{t3.1} and to conclude that the random compact set defined by relations~\eqref{3.016} is an exponential attractor for~$\PPsi^y$ (with a uniform rate of attraction). 

\medskip
We now turn to the property of H\"older continuity for~$\MM_\omega^y$. Inequalities~\eqref{3.31} and~\eqref{3.031} are proved by similar arguments, and therefore we give a detailed proof for the first of them and confine ourselves to the scheme of the proof for the other. Inequality~\eqref{3.31} is established in four steps.

\smallskip
{\it Step~1}. 
We first show that
\begin{equation} \label{3.35}
d_V^s\bigl(V_k^{y_1}(\omega),V_k^{y_2}(\omega)\bigr)
\le|y_1-y_2|^\alpha\sum_{j=1}^{k}\prod_{i=1}^{j}K_{\sigma_{-i}\omega},
\end{equation}
where $|y_1-y_2|\le1$. The proof is by induction on~$k$. For $k=1$, the random finite set $U_0^y(\omega)$ does not depend on~$\omega$. Recalling that $V_1^y(\omega)=\psi_1^{y,{\sigma_{-1}\omega}}(U_0^y(\sigma_{\sigma_{-1}}\omega))$ and using~\eqref{3.30}, for $|y_1-y_2|\le1$ we get the inequality
$$
d_V^s\bigl(V_1^{y_1}(\omega),V_1^{y_2}(\omega)\bigr)
\le K_{\sigma_{-1}\omega} |y_1-y_2|^\alpha,
$$
which coincides with~\eqref{3.35} for $k=1$. Assuming that inequality~\eqref{3.35} is established for $1\le k\le m$, let us prove it for~$k=m+1$.  In view of Lemma~\ref{l5.4}, the random finite set~$U_m^y(\omega)$  satisfying~\eqref{3.32} can be constructed in such a way that
$$
d^s\bigl(U_m^{y_1}(\omega),U_m^{y_2}(\omega)\bigr)
\le d^s\bigl(V_m^{y_1}(\sigma_m\omega),V_m^{y_2}(\sigma_m\omega)\bigr).
$$
Combining this with~\eqref{3.30}, we see that
$$
d_V^s\bigl(V_{m+1}^{y_1}(\omega),V_{m+1}^{y_2}(\omega)\bigr)
\le K_{\sigma_{-1}\omega}\bigl\{|y_1-y_2|^\alpha+d_V^s\bigl(V_m^{y_1}(\sigma_{-1}\omega),V_m^{y_2}(\sigma_{-1}\omega)\bigr)\bigr\}. 
$$
Using inequality~\eqref{3.35} with $k=m$ and~$\omega$ replaced by~$\sigma_{-1}\omega$ to estimate the second term on the right-hand side, we arrive at~\eqref{3.35} with $k=m+1$. 

\smallskip
{\it Step~2}. 
We now prove that
\begin{equation} \label{3.36}
d_V^s(\MM_\omega^{y_1},\MM_\omega^{y_2})
\le 2\e_n(\omega)
+|y_1-y_2|^\alpha \sum_{k=1}^nk\prod_{i=1}^k K_{\sigma_{-i}\omega},
\end{equation}
where $n\ge1$ is an arbitrary integer and~$\e_n(\omega)$ is defined in~\eqref{3.18}. Indeed, inequality~\eqref{3.18}, which was proved in the case of a single RDS, remains true in the present parameter-dependent setting:
$$
d_V^s\biggl(\MM_\omega^{y_p},\bigcup_{k=1}^n V_k^{y_p}(\omega)\biggr)\le\e_n(\omega), \quad p=1,2.
$$
Combining this with~\eqref{3.36} and the obvious inequality 
$$
d_V^s(A_1\cup A_2,B_1\cup B_2)\le d_V^s(A_1,B_1)+d_V^s(A_2,B_2),
$$ 
we derive
$$
d_V^s(\MM_\omega^{y_1},\MM_\omega^{y_2})
\le 2\e_n(\omega)
+\sum_{k=1}^n d_V^s(V_k^{y_1}(\omega),V_k^{y_2}(\omega)).
$$
Using~\eqref{3.35} to estimate each term of the sum on the right-hand side, we arrive at~\eqref{3.36}.

\smallskip
{\it Step~3}. 
Suppose now we have shown that 
\begin{equation} \label{3.38}
\sum_{k=1}^nk\prod_{i=1}^k K_{\sigma_{-i}\omega}\le\exp(\zeta_\omega^n\,n), \quad n\ge1,
\end{equation}
where $\zeta_\omega^n\ge1$ is a sequence of almost surely finite random variables such that 
$$
\lim_{n\to\infty}\zeta_\omega^n=:\zeta_\omega<\infty
\quad\mbox{with probability~$1$}. 
$$
In this case, combining~\eqref{3.36} with~\eqref{eps} and~\eqref{3.38}, we derive  
\begin{equation} \label{3.39}
d_V^s(\MM_\omega^{y_1},\MM_\omega^{y_2})
\le 10\exp(-\eta_\omega^n\,n)+\exp(\zeta_\omega^n\,n)|y_1-y_2|^\alpha, 
\end{equation}
where we set 
$$
\eta_\omega^n=\frac{m\ln 2}{2m+\log_2(\xi_\omega+o_\omega(1))}.
$$
We wish to optimize the choice of~$n$ in~\eqref{3.39}. To this end, first note that
\begin{equation*} 
\lim_{n\to\infty}\eta_\omega^n=:\eta_\omega
=\frac{m\ln 2}{2m+\log_2\xi_\omega}>0.
\end{equation*}
Let $n_1(\omega)\ge1$ be the smallest integer such that
\begin{equation} \label{3.40}
\zeta_\omega^n\le 2\zeta_\omega, \quad 
\eta_\omega^n\ge \frac{\eta_\omega}{2}
\quad\mbox{for $n\ge n_1(\omega)$},
\end{equation}
and let $n_2(\omega,r)$ be the smallest integer greater than $2\eta_\omega^{-1}\gamma_\omega\ln r^{-1}$, where the small random constant $\gamma_\omega>0$ will be chosen later. Note that if
$$
r\le \tau_\omega:=\exp\bigl(-\tfrac{n_1(\omega)\eta_\omega}{2\gamma_\omega}\bigr),
$$
then $n_2(\omega,r)\ge n_1(\omega)$. Combining~\eqref{3.39} and~\eqref{3.40}, for $|y_1-y_2|\le\tau_\omega$ and $n=n_2(\omega,|y_1-y_2|)$,  we obtain
\begin{align*}
d_V^s(\MM_\omega^{y_1},\MM_\omega^{y_2})
&\le 10\exp(-\eta_\omega n/2)+\exp(2\zeta_\omega n)|y_1-y_2|^\alpha\\
&\le 10\,|y_1-y_2|^{\gamma_\omega}
+|y_1-y_2|^{\alpha-8\zeta_\omega\gamma_\omega/\eta_\omega}. 
\end{align*}
Choosing 
\begin{equation} \label{3.41}
\gamma_\omega=\frac{\alpha \eta_\omega}{\eta_\omega+8\zeta_\omega},
\end{equation}
we obtain 
$$
d_V^s(\MM_\omega^{y_1},\MM_\omega^{y_2})
\le 11\,|y_1-y_2|^{\gamma_\omega}
\quad\mbox{for $|y_1-y_2|\le\tau_\omega$}.
$$
This obviously implies the required inequality~\eqref{3.31} with an almost surely finite random constant~$P_\omega$.  

\smallskip
{\it Step~4}. It remains to prove~\eqref{3.38}. In view of~\eqref{3.024}, we have
\begin{equation} \label{3.42}
\prod_{i=1}^k K_{\sigma_{-i}\omega}
\le\bigl(\xi_\omega+o_\omega(1)\bigr)^{n/m}
\quad\mbox{for $1\le k\le n$}. 
\end{equation}
It follows that 
$$
\sum_{k=1}^nk\prod_{i=1}^k K_{\sigma_{-i}\omega}
\le \frac{n(n+1)}{2}\bigl(\xi_\omega+o_\omega(1)\bigr)^{n/m},
$$
whence we obtain~\eqref{3.38} with 
\begin{equation} \label{3.45}
\zeta_\omega^n=\zeta_\omega+o_\omega(1), \quad 
\zeta_\omega=\frac1m\ln\xi_\omega. 
\end{equation}
This completes the proof of~\eqref{3.31}. It is straightforward to see from~\eqref{3.41} and the explicit formulas for~$\zeta_\omega$ and~$\eta_\omega$ that if~$\xi_\omega\ge1$ is bounded, then~$\gamma_\omega$ can be chosen to be independent of~$\omega$. 

\medskip
We now turn to the scheme of the proof of~\eqref{3.031}. Suppose we have shown that (cf.~\eqref{3.35})
\begin{equation} \label{3.47}
d_V^s\bigl(V_k^{y}(\theta_{\tau_1}\omega),V_k^{y}(\theta_{\tau_1}\omega)\bigr)\le D_k(\omega)|\tau_1-\tau_2|^\alpha\quad
\mbox{for $y\in Y$, $|\tau_1-\tau_2|\le1$}, 
\end{equation}
where we set
\begin{equation} \label{3.48}
D_k(\omega)=c\,L_{\sigma_{-k}\omega}
\prod_{i=1}^{k}K_{\sigma_{-i}\omega}
+\sum_{j=1}^{k}\prod_{i=1}^{j}K_{\sigma_{-i}\omega},
\end{equation}
and $c\ge1$ is the constant in~\eqref{6.10}. In this case, repeating the argument used in Step~2, we derive (cf.~\eqref{3.36})
\begin{equation} \label{3.49}
d_V^s(\MM_{\theta_{\tau_1}\omega}^{y},\MM_{\theta_{\tau_2}\omega}^{y})
\le \e_n(\theta_{\tau_1}\omega)+\e_n(\theta_{\tau_2}\omega)
+D_n(\omega)\,|\tau_1-\tau_2|^\alpha,
\end{equation}
where $n\ge1$ is an arbitrary integer and~$\e_n(\omega)$ is defined in~\eqref{3.18}. If we prove that (cf.~\eqref{3.38})
\begin{equation} \label{3.50}
D_n(\omega)\le \exp(\zeta_\omega^n\,n), \qquad \lim_{n\to\infty}\zeta_\omega^n=\zeta\in\R_+\quad\mbox{a.\,s.},
\end{equation}
then the argument of Step~3 combined with the boundedness of~$\xi_\omega$ implies the required inequality~\eqref{3.031}. To prove~\eqref{3.50}, note that, by the Birkhoff theorem, there is an integrable random variable $\lambda_\omega\ge1$ such that
$$ 
\sum_{k=1}^n L_{\sigma_{-k}\omega}=n\lambda_\omega+o_\omega(n), \quad n\ge1.
$$ 
Combining this with~\eqref{3.38}, we obtain inequality~\eqref{3.50} (with  larger random variables~$\zeta_\omega^n$). 

\smallskip
Thus, it remains to establish inequality~\eqref{3.47}. Its proof is by induction on~$k$. It follows from~\eqref{5.016} and~\eqref{3.28} that 
$$
d^s\bigl(U_0(\theta_{\tau_1}\omega),U_0(\theta_{\tau_2}\omega)\bigr)
\le c\,\bigl|R_{\theta_{\tau_1}\omega}-R_{\theta_{\tau_2}\omega}\bigr|
\le c\,L_\omega|\tau_1-\tau_2|^\alpha. 
$$
Since $V_1^y(\omega)=\psi_1^{y,\sigma_{-1}\omega}(U_0(\sigma_{-1}\omega))$, using~\eqref{3.30} we derive the inequality
$$
d_V^s\bigl(V_1^y(\theta_{\tau_1}\omega),V_1^y(\theta_{\tau_2}\omega)\bigr)
\le K_{\sigma_{-1}\omega}\bigl(1+c\,L_{\sigma_{-1}\omega}\bigr)
|\tau_1-\tau_2|^\alpha,
$$
which coincides with~\eqref{3.47} for $k=1$. Let us assume that~\eqref{3.47} is true for $k=m$ and prove it for $k=m+1$. In view of~\eqref{5.22}, the random finite set~$U_m^y(\omega)$ satisfies the inequality
$$
d^s\bigl(U_m^{y}(\omega_1),U_m^{y}(\omega_2)\bigr)
\le d^s\bigl(V_m^{y}(\sigma_m\omega_1),V_m^{y}(\sigma_m\omega_2)\bigr),
$$ 
where $\omega_i=\theta_{\tau_i}\omega$ for $i=1,2$. It follows that 
$$
d_V^s\bigl(V_{m+1}^y(\omega_1),V_{m+1}^y(\omega_2)\bigr)
\le K_{\sigma_{-1}\omega}\bigl\{|\tau_1-\tau_2|^\alpha+d^s\bigl(V_m^{y}(\sigma_m\omega_1),V_m^{y}(\sigma_m\omega_2)\bigr)\bigr\}. 
$$
The induction hypothesis now implies inequality~\eqref{3.47} with $k=m+1$. The proof of Theorem~\ref{t3.3} is complete. 
\end{proof}

As in the case of Theorem~\ref{t3.1}, inequality~\eqref{3.033} holds for $B=\aA_\omega$ with $C(B)=r$. Furthermore, if the group of shift operators~$\{\sigma_k\}$ is ergodic, then the H\"older exponent in~\eqref{3.31} is a deterministic constant (cf.\ Remark~\ref{r3.3}). Finally, if $\psi_k^{y,\omega}$, $R_\omega^y$, and $K_\omega^y$ do not depend on~$\omega$ for some $y=y_0\in Y$, then the exponential attractor~$\MM_\omega^{y_0}$  constructed in the above theorem is also independent of~$\omega$. Indeed, $\MM_\omega^{y}$ was defined in terms of $\psi_k^{y,\omega}$, $R_\omega^y$, $K_\omega^y$ and the random finite sets $U_k^y(\omega)$ that form $\delta$-nets for the random compact sets~$\CC_k^y(\omega)$. As is mentioned after the proof of Lemma~\ref{l5.4}, these $\delta$-nets are independent of~$\omega$ if so are the random compact sets to be covered. Using this observation, it is easy to prove by recurrence that $\CC_k^{y_0}(\omega)$ and  $U_k^{y_0}(\omega)$ do not depend on~$\omega$, and therefore the same property is true for the attractor~$\MM_\omega^{y_0}$.

\subsection{Exponential attractor for continuous-time RDS}
\label{s3.3}
We now turn to a construction of an exponential attractor for continuous-time RDS. Let us fix a bounded closed interval $Y\subset\R$ and consider a family of RDS $\PPhi^y=\{\varphi_t^{y,\omega}:H\to H, t\ge0\}$, $y\in Y$. We shall always assume that the associated group of shift operators $\theta_t:\Omega\to\Omega$ satisfies the following condition.

\begin{condition} \label{E}
The discrete-time dynamical system~$\{\theta_{k \tau_0}:\Omega\to\Omega, k\in\Z\}$ is ergodic for any $\tau_0>0$. 
\end{condition} 

Given $\tau_0>0$, consider a family of discrete-time RDS $\PPsi^y=\{\psi_k^{y,\omega},k\in\Z_+\}$ defined by
$$
\psi_k^{y,\omega}(u)=\varphi_{k\tau_0}^{y,\omega}(u), \quad u\in H,
\quad k\ge0, \quad \omega\in\Omega.
$$
with the group~$\{\sigma_k=\theta_{k\tau_0}, k\in\Z\}$ as the associated family of shift operators. The following theorem is the main result of this section. 

\begin{theorem} \label{t3.7}
Suppose there is $\tau_0>0$ such that the family~$\{\PPsi^y, y\in Y\}$ satisfies Condition~\ref{UH}, in which $\TT=[-\tau_0,\tau_0]$ and the measurable isomorphism~$\theta_\tau$ coincides with the shift operator. Furthermore, suppose that Condition~\ref{E} is also satisfied, $\aA_\omega^y=B_V(R_\omega^y)$ is a random absorbing set  for~$\PPhi^y$, and the mapping
\begin{equation} \label{HC}
(t,\tau,y,u)\mapsto\varphi_t^{y,\theta_\tau\omega}(u), \quad \R_+\times[-\tau_0,\tau_0]\times Y\times V\to H, 
\end{equation}
is uniformly H\"older continuous on compact subsets with a universal deterministic exponent. Then there is a random compact set $(y,\omega)\mapsto \MM_\omega^y$ in~$H$ with the underlying space $Y\times\Omega$ such that the following properties hold. 

\smallskip
{\bf Attraction.} For any $y\in Y$, the random compact set $\MM_\omega^y$ is an exponential attractor for~$\PPhi^y$. Moreover, the fractal dimension of~$\MM_\omega^y$ is bounded by a universal deterministic constant, and the attraction property holds for the norm of~$V$ uniformly with respect to~$y\in Y$:
\begin{equation} \label{3.51}
d_V\bigl(\varphi_t^{y,\omega}(B),\MM_{\theta_t\omega}^y\bigr)
\le C(B)e^{-\beta t}, \quad t\ge0, \quad y\in Y.
\end{equation}
Here $B\subset H$ is an arbitrary ball, $C(B)$ and~$\beta$ are positive deterministic constants, and the inequality holds with probability~$1$. 

\smallskip
{\bf H\"older continuity.}
The function $(t,y)\mapsto \MM_{\theta_t\omega}^y$ is H\"older-continuous from $Y\times\R$ to the space of random compact sets in~$H$ with the metric~$d_H^s$. More precisely, there is $\gamma\in(0,1]$ such that for any $T>0$ and an almost surely finite random variable $P_{\omega,T}$ we have
\begin{equation} \label{3.52}
d_H^s\bigl(\MM_{\theta_{t_1}\omega}^{y_1},\MM_{\theta_{t_2}\omega}^{y_2}\bigr)
\le P_{\omega,T}\bigl(|t_1-t_2|^\gamma+|y_1-y_2|^\gamma\bigr)
\end{equation}
for $y_1,y_2\in Y$, $t_1,t_2\in[-T,T]$, and $\omega\in\Omega$.
\end{theorem}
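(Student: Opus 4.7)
The strategy is to apply the discrete-time Theorem~\ref{t3.3} at step $\tau_0$ and then enlarge the resulting attractor by the action of the flow over one time-step.

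\emph{Step~1 (discrete attractor).} Condition~\ref{E} forces $\{\sigma_k\}=\{\theta_{k\tau_0}\}$ to be ergodic, so the Birkhoff limit $\xi_\omega$ in~\eqref{3.20} is a deterministic constant. Applying Theorem~\ref{t3.3} to the family $\{\PPsi^y\}$ produces a random compact set $(y,\omega)\mapsto\widetilde\MM_\omega^y$ which is a discrete exponential attractor for $\PPsi^y$, with a uniform-in-$y$ exponential attraction rate in the $V$-norm (see~\eqref{3.033}), a deterministic dimension bound (cf.\ Remark~\ref{r3.3}), and a deterministic H\"older exponent $\gamma\in(0,1]$ governing the dependence on both $y\in Y$ and $\tau\in[-\tau_0,\tau_0]$.

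\emph{Step~2 (definition and semi-invariance).} Set
$$
\MM_\omega^y:=\bigcup_{s\in[0,\tau_0]}\varphi_s^{y,\theta_{-s}\omega}\bigl(\widetilde\MM_{\theta_{-s}\omega}^y\bigr).
$$
For $t\ge0$ and $s\in[0,\tau_0]$, writing $t+s=k\tau_0+s'$ with $k\ge0$ and $s'\in[0,\tau_0]$, two applications of the cocycle identity give
$$
\varphi_t^{y,\omega}\circ\varphi_s^{y,\theta_{-s}\omega}
=\varphi_{s'}^{y,\theta_{t-s'}\omega}\circ\psi_k^{y,\theta_{-s}\omega},
$$
and the discrete semi-invariance $\psi_k^{y,\theta_{-s}\omega}(\widetilde\MM_{\theta_{-s}\omega}^y)\subset\widetilde\MM_{\theta_{t-s'}\omega}^y$ places the image in $\MM_{\theta_t\omega}^y$. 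For the finite-dimensionality, view $\MM_\omega^y$ as the image of $\mathsf K:=\{(s,u)\colon s\in[0,\tau_0],\,u\in\widetilde\MM_{\theta_{-s}\omega}^y\}\subset[0,\tau_0]\times V$ under the H\"older map $F(s,u)=\varphi_s^{y,\theta_{-s}\omega}(u)$. The H\"older-in-$\tau$ estimate for $\widetilde\MM$ (from Theorem~\ref{t3.3}) together with its deterministic dimension bound shows that $\mathsf K$ has finite fractal dimension in $[0,\tau_0]\times V$, and the standard estimate for the dimension of H\"older images, applied to~$F$ (H\"older by the assumption on~\eqref{HC}), yields a deterministic universal bound for $\dim_f(\MM_\omega^y)$.

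\emph{Step~3 (attraction and H\"older continuity).} For~\eqref{3.51}, given a ball $B\subset H$, after the absorbing time of~$\PPhi^y$ one decomposes $t=k\tau_0+s$ with $s\in[0,\tau_0)$ and writes $\varphi_t^{y,\omega}(B)=\varphi_s^{y,\theta_{k\tau_0}\omega}\bigl(\psi_k^{y,\omega}(B)\bigr)$. The discrete estimate~\eqref{3.033} provides exponential smallness in the $V$-norm of the distance of $\psi_k^{y,\omega}(B)$ to $\widetilde\MM_{\theta_{k\tau_0}\omega}^y$, and the uniform-on-compacts H\"older continuity of~\eqref{HC} transports this into an exponentially small $H$-distance to $\varphi_s^{y,\theta_{k\tau_0}\omega}(\widetilde\MM_{\theta_{k\tau_0}\omega}^y)\subset\MM_{\theta_t\omega}^y$. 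Finally, \eqref{3.52} is obtained by comparing the representations $\MM_{\theta_{t_i}\omega}^{y_i}=\bigcup_{s'\in[0,\tau_0]}\varphi_{s'}^{y_i,\theta_{t_i-s'}\omega}(\widetilde\MM_{\theta_{t_i-s'}\omega}^{y_i})$ for $i=1,2$ and combining (i) joint H\"older-continuity of~\eqref{HC} in $(t,\tau,y,u)$ with (ii) the joint H\"older dependence of $\widetilde\MM_\omega^y$ on $(\tau,y)$ established in Theorem~\ref{t3.3}.

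\emph{Main obstacle.} The principal difficulty lies in Step~3, in obtaining attraction \emph{in $H$} with a rate that is simultaneously exponential in~$t$ and uniform in~$y\in Y$: applying $\varphi_s^{y,\theta_{k\tau_0}\omega}$ introduces H\"older factors that a priori depend on~$\omega$ through the full trajectory $\theta_{k\tau_0}\omega$ and on the random radius $R_\omega^y$. One controls these factors by first routing trajectories into the fixed-shape ball $\aA_\omega^y=B_V(R_\omega^y)$, on which the H\"older constants in~\eqref{3.30} are dominated by the integrable $K_\omega$, and by exploiting the uniform H\"older continuity of~\eqref{HC} on compact subsets of $\R_+\times[-\tau_0,\tau_0]\times Y\times V$ to ensure that these factors do not destroy the exponential decay provided by~\eqref{3.033}.
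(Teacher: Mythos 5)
Your overall architecture coincides with the paper's: apply Theorem~\ref{t3.3} to the time-$\tau_0$ maps, define $\MM_\omega^y=\bigcup_{s\in[0,\tau_0]}\varphi_s^{y,\theta_{-s}\omega}(\widetilde\MM_{\theta_{-s}\omega}^y)$, and deduce semi-invariance, the dimension bound and the H\"older continuity exactly as you describe (modulo two technical points you pass over: the measurability of $\MM_\omega^y$, which the paper handles via Proposition~\ref{p5.6}, and the fact that the discrete semi-invariance/attraction statements must first be upgraded from "a.s.\ for each fixed shift $s$" to "a.s.\ for all $s$ simultaneously", done by taking rational $s$ and using continuity).

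The genuine gap is in your attraction step. You decompose $t=k\tau_0+s$ with the fractional part \emph{at the end} and then push the discrete estimate \eqref{3.033} forward through $\varphi_s^{y,\theta_{k\tau_0}\omega}$. This (a) only yields an estimate in the $H$-metric, whereas \eqref{3.51} asserts attraction in $d_V$, and (b) introduces the H\"older constant of $\varphi_s^{y,\theta_{k\tau_0}\omega}$ on the absorbing set, which is of the order of $K_{\sigma_k\omega}$ and is \emph{not} uniformly bounded in $k$ along the orbit: integrability of $K_\omega$ and the Birkhoff theorem only give $K_{\sigma_k\omega}=o_\omega(k^{1/m})$, so at best you recover exponential decay with an $\omega$-dependent prefactor $C(B,\omega)$ and a degraded exponent $\gamma\beta$, not the deterministic constant $C(B)$ claimed in the theorem. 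Your "main obstacle" paragraph correctly identifies the problem but the proposed remedy (dominating by the integrable $K_\omega$) does not resolve it, since integrability does not control $\sup_k K_{\sigma_k\omega}$. The paper avoids the issue entirely by putting the fractional part \emph{at the beginning}: for $t\ge T(B)+1$ choose a real $s\in[T(B),T(B)+1)$ with $k:=t-s\in\Z_+$, use absorption to get $\varphi_s^{y,\omega}(B)\subset\aA_{\theta_s\omega}$, and then apply the discrete $V$-norm attraction estimate \eqref{3.53} (valid for all real shifts $s$ by the rational-density-plus-continuity argument) over the integer remainder $k$. No post-composition with the continuous flow occurs, so no random H\"older factors enter, and one obtains \eqref{3.51} in the $V$-norm with the deterministic constant $C(B)=r\,e^{T(B)+1}$. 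You should replace your Step~3 for the attraction property by this shifted-absorption-time argument.
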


\begin{proof}
By rescaling the time, we can assume that $\tau_0=1$. 
Let us denote by~$\{\widetilde\MM_\omega^y\}$ the random compact set constructed in Theorem~\ref{t3.3} for the family of discrete-time RDS~$\PPsi^y$ and define
\begin{equation} \label{rea}
\MM_\omega^y=\bigcup_{\tau\in[0,1]}\varphi_\tau^{y,\theta_{-\tau}\omega}
\bigl(\widetilde\MM_{\theta_{-\tau}\omega}^y\bigr).
\end{equation}
We shall prove that~$\{\MM_\omega^y\}$ possesses all the required properties.

\medskip
{\it Step~1: Measurability}. Let us show that $(y,\omega)\mapsto \MM_\omega^y$ is a random compact set. We need to prove that, for any $u\in H$, the function 
$$
(y,\omega)\mapsto \inf_{v\in \MM_\omega^y}\|u-v\|
$$
is measurable. To this end, we shall apply Proposition~\ref{p5.6} to the family of compact sets 
$$
(y,\omega)\mapsto \KK_{(y,\omega)}=\{(\tau,u)\in[0,1]\times H:u\in \widetilde\MM_{\theta_{-\tau}\omega}^y\}
$$
and the random mapping 
$$
\psi_{(y,\omega)}:[0,1]\times H\to H, \quad (\tau,u)\mapsto\varphi_\tau^{y,\theta_{-\tau}\omega}(u). 
$$
It is straightforward to see that $\MM_\omega^y=\psi_{(y,\omega)}(\KK_{(y,\omega)})$. 
If we prove that~$\psi_{(y,\omega)}$ and~$\KK_{(y,\omega)}$ satisfy the hypotheses of Proposition~\ref{p5.6}, then we can conclude that~$\MM_\omega^y$ is a random compact set in~$H$.

For any fixed $(y,\omega)$, the mapping $(\tau,u)\mapsto \psi_{(y,\omega)}(\tau,u)$ is continuous. On the other hand, the measurability in~$\omega$ and the continuity in~$y$ of the mapping $\varphi_\tau^{y,\theta_{-\tau}\omega}(u)$ imply that, for any fixed $(\tau,u)$, the mapping $\psi_{(y,\omega)}(\tau,u)$ is measurable.  Furthermore, for any $(\tau,u)\in[0,1]\times H$, the mapping 
$$
(y,\omega)\mapsto \inf_{(\tau',u')\in \KK_{(y,\omega)}}\bigl(|\tau-\tau'|+\|u-u'\|\bigr)=\inf_{\tau'\in\Q\cap[0,1]}\Bigl(\,\inf_{u'\in \widetilde\MM_{\theta_{-\tau}\omega}^y}\|u-u'\|\Bigr)
$$
is measurable, so that $\KK_{(y,\omega)}$ is a random compact set. Thus, the application of Proposition~\ref{p5.6} is justified.

\smallskip
{\it Step~2: Semi-invariance}. 
Since~$\{\widetilde\MM_\omega^y\}$ is an exponential attractor for the discrete-time RDS~$\PPsi^y$, for any $y\in Y$ with probability~$1$ we have 
$$
\varphi_k^{y,\omega}(\widetilde\MM_\omega^y)\subset 
\widetilde\MM_{\theta_k\omega}^y, 
\quad k\ge0. 
$$
It follows that, for any rational $s\in\R$ and $y\in Y$, the inequality
\begin{equation} \label{3.54}
\varphi_k^{y,\theta_s\omega}(\widetilde\MM_{\theta_s\omega}^y)\subset 
\widetilde\MM_{\theta_{k+s}\omega}^y, 
\quad k\ge0,
\end{equation}
takes place almost surely. The continuity in~$(s,y)$ of all the objects entering inequality~\eqref{3.54} implies that it holds, with probability~$1$, for all $s\in\R$, $y\in Y$, and $k\ge0$. The semi-invariance can now be established by a standard argument. Namely, for any $\tau\in[0,1]$ and $t\ge0$, we choose an integer $k\ge0$ so that $\sigma=t+\tau-k\in[0,1)$ and write
\begin{align*}
\varphi_t^{y,\omega}\bigl(\varphi_\tau^{y,\theta_{-\tau}\omega}
\bigl(\widetilde\MM_{\theta_{-\tau}\omega}^y\bigr)\bigr)
&=\varphi_{k+\sigma}^{y,\theta_{-\tau}\omega}\bigl(\widetilde\MM_{\theta_{-\tau}\omega}^y\bigr)\\
&=\varphi_\sigma^{y,\theta_{k-\tau}\omega}\bigl(\varphi_k^{y,\theta_{-\tau}\omega}
\bigl(\widetilde\MM_{\theta_{-\tau}\omega}^y\bigr)\bigr)\\
&\subset\varphi_\sigma^{y,\theta_{-\sigma}(\theta_t\omega)}\bigl(\widetilde\MM_{\theta_{k-\tau}\omega}^y\bigr)\\
&=\varphi_\sigma^{y,\theta_{-\sigma}(\theta_t\omega)}\bigl(\widetilde\MM_{\theta_{-\sigma}(\theta_t\omega)}^y\bigr)\subset\MM_{\theta_t\omega}^y,
\end{align*}
where we used~\eqref{3.54} to derive the first inclusion. Since the above relation is true for any~$\tau\in[0,1]$, we conclude that~$\MM_\omega^y$ is semi-invariant under~$\varphi_t^\omega$. 

\smallskip
{\it Step~3: Exponential attraction}. We  first note that, with probability~$1$, 
$$
\sup_{y\in Y}d_V\bigl(\varphi_k^{y,\omega}(\aA_\omega),
\MM_{\theta_k\omega}^y\bigr)\le r\,e^{-\beta k}, \quad k\ge0. 
$$
cf.\ discussion following Theorem~\ref{t3.1}. It follows that 
\begin{equation}  \label{3.53}
\sup_{y\in Y}d_V\bigl(\varphi_k^{y,\theta_s\omega}(\aA_{\theta_s\omega}),
\MM_{\theta_{k+s}\omega}^y\bigr)\le r\,e^{-\beta k}, \quad k\ge0,
\end{equation}
where the inequality holds a.s. for all rational numbers $s\in\R$. The continuity in~$s$ of all the objects entering inequality~\eqref{3.53} implies that, with probability~$1$, it remains true for all $s\in\R$. We now fix an arbitrary ball $B\subset H$ and denote by~$T(B)\ge0$ the instant of time after which the trajectories starting from~$B$ are in~$\aA_{\theta_t\omega}$. For any $t\ge T(B)+1$, we choose $s\in [T(B),T(B)+1)$ such that $k:=t-s$ is an integer and use the cocycle property to write
\begin{align*}
d_V\bigl(\varphi_t^{y,\omega}(B),\MM_{\theta_t\omega}^y\bigr)
&=d_V\bigl(\varphi_k^{y,\theta_s\omega}(\varphi_s^{y,\omega}(B)),
\MM_{\theta_t\omega)}^y\bigr)\\
&\le d_V\bigl(\varphi_k^{y,\theta_s\omega}(\aA_{\theta_s\omega}),
\MM_{\theta_{k+s}\omega)}^y\bigr). 
\end{align*}
Taking the supremum in~$y\in Y$ and using~\eqref{3.53}, we obtain
$$
\sup_{y\in Y}
d_V\bigl(\varphi_t^{y,\omega}(B),\MM_{\theta_t\omega}^y\bigr)
\le r\,e^{-\beta k}\le r\,e^{T(B)+1}e^{-\beta t}.
$$
This proves inequality~\eqref{3.51} with $C(B)=r\,e^{T(B)+1}$.

\smallskip
{\it Step~4: Fractal dimension}. 
As was established in the proof of Theorem~\ref{t3.1}, the fractal dimension of~$\widetilde\MM_\omega^y$ admits the explicit bound (see~\eqref{3.22})
$$
\dim_f(\widetilde\MM_\omega^y)
\le \frac{2^mC\xi_\omega(\ln\xi_\omega+2m)}{m\ln 2},
$$
where $\xi_\omega$ is the random variable defined in~\eqref{3.20}. Since the group~$\{\sigma_k\}$ is ergodic, $\xi_\omega$ is constant, and $\dim_f(\widetilde\MM_\omega^y)$ can be estimated, with probability~$1$, by a  constant not depending on~$y$ and~$\omega$. Since the function $\tau\mapsto\varphi_\tau^{y,\theta_{-\tau}\omega}(u)$ and $\tau\mapsto\widetilde\MM_{\theta_{-\tau}\omega}^y$ are H\"older continuous with a deterministic exponent, it is easy to prove that the fractal dimension of~$\MM_\omega^y$ is bounded by a universal constant. 

\smallskip
{\it Step~5: Time continuity}. Since  mapping~\eqref{HC} is H\"older continuous, the required inequality~\eqref{3.52} will be established if we prove that~\eqref{3.52} is true for~$\widetilde\MM_\omega^y$. However, this is an immediate consequence inequalities~\eqref{3.31}, \eqref{3.031} and the ergodicity of the group of shift operators~$\{\sigma_k\}$. The proof of the theorem is complete. 
\end{proof}

As in the case of discrete-time RDS, if $\varphi_t^{y,\omega}$, $R_\omega^y$, and the random objects entering Condition~\ref{UH} do not depend on~$\omega$ for some~$y_0$, then the exponential attractor~$\MM_\omega^{y_0}$ is also independent of~$\omega$. This fact follows immediately from representation~\eqref{rea}, because $\varphi_{\tau}^{y_0,\theta_{-\tau}\omega}$ and $\widetilde\MM_{\theta_{-\tau}\omega}^{y_0}$ do not depend on~$\omega$. 

\section{Application to a reaction--diffusion system}
\label{s5}
\subsection{Formulation of the main result}
In this section, we apply Theorem~\ref{t3.7} to the reaction-diffusion~\eqref{1.1} in which the amplitude of the random force depends on a parameter. Namely, we consider the equation
\begin{equation} \label{5.1}
\dot u-a\Delta u+f(u)=h(x)+\e\,\eta(t,x),\quad x\in D,
\end{equation}
where $D\subset\R^n$ is a bounded domain with smooth boundary and $\e\in[-1,1]$ is a parameter. Concerning the matrix~$a$, the nonlinear term~$f$, and the external forces~$h$ and~$\eta$, we assume that they satisfy the hypotheses described in Section~\ref{s2.3}, with the stronger condition $p\le\frac{n}{n-2}$ for $n\ge3$. Moreover, we impose a higher regularity on the external force, assuming that
\begin{equation} \label{5.2}
h\in H_0^1(D,\R^k)\cap H^2(D,\R^k), \quad 
\BBBB_3:=\sum_{j=1}^\infty \lambda_j^3b_j^2<\infty,
\end{equation}
where $\lambda_j$ denotes the $j^{\mathrm{th}}$ eigenvalue of the Dirichlet Laplacian. This condition ensures that almost every trajectory of a solution for Eq.~\eqref{5.1} with $f\equiv0$ is a continuous function of time with range in~$H^3$. The following theorem is the main result of this section.

\begin{theorem} \label{t5.1}
Under the above hypotheses, for any $\e\in[-1,1]$ problem~\eqref{5.1}, \eqref{1.2} possesses an exponential attractor~$\MM_\omega^\e$. Moreover, the sets~$\MM_\omega^\e$ can be constructed in such a way that~$\MM_\omega^0$ does not depend on~$\omega$, the fractal dimension of~$\MM_\omega^\e$ is bounded by a universal deterministic constant, the attraction property holds uniformly with respect to~$\e$, and 
\begin{equation} \label{5.3}
d_H^s\bigl(\MM_\omega^{\e_1},\MM_\omega^{\e_2}\bigr)
\le P_\omega|\e_1-\e_2|^\gamma\quad\mbox{for $\e_1,\e_2\in[-1,1]$},
\end{equation}
where $\gamma\in(0,1]$ is a constant and~$P_\omega$ is an almost surely finite random variable. 
\end{theorem}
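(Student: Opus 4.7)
The strategy is to apply Theorem~\ref{t3.7} directly to the family of RDS $\PPhi^\e$ generated by \eqref{5.1}, \eqref{1.2} over the parameter interval $Y = [-1,1]$, with an arbitrary fixed time step $\tau_0 > 0$. The standard preparatory move is the substitution $u = \e z + v$, where $z = z^\omega$ is the stationary solution of the Ornstein--Uhlenbeck equation $\dot z - a\Delta z = \eta$ with zero Dirichlet boundary conditions; hypothesis~\eqref{5.2} guarantees that $z^\omega \in C(\R, H^3)$ with Gaussian tails in every Sobolev norm of order~$\le 3$. The profile $v = u - \e z$ satisfies a semilinear parabolic equation with the random perturbation appearing only through the argument of $f$, so that deterministic parabolic techniques apply pathwise.

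I take the higher-regularity Hilbert space $V = H^2(D,\R^k) \cap H_0^1(D,\R^k)$, which embeds compactly into $H$ and whose Kolmogorov entropy satisfies \eqref{3.3} with $m = n/2$. Combining energy estimates in $L^2$, $H^1$, and $H^2$ for $v$ with uniform moment bounds on $\|z^{\theta_\tau\omega}\|_{H^3}$ gives a random absorbing ball $\aA_\omega^\e = B_V(R_\omega^\e)$, where $R_\omega^\e$ is explicit in terms of a time average of polynomial functionals of $z$ on the half-line $(-\infty, 0]$. Enlarging $R_\omega^\e$ by a fixed factor and choosing $\tau_0$ large enough so that the dissipation in one step dominates absorbs the $r$-neighborhood, yielding the stability condition \eqref{3.29}. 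The Lipschitz bound \eqref{3.30} from $H$ to $V$ for $\psi_1^{\e,\omega} = \varphi_{\tau_0}^{\e,\omega}$ is the standard parabolic smoothing estimate: the difference of two solutions satisfies a linear equation whose zero-order multiplier $\int_0^1 f'(\e z + v_1 + s(v_2 - v_1))\,ds$ is controlled in a suitable Lebesgue space thanks to the subcritical assumption $p \le n/(n-2)$. Hölder dependence of both $R_\omega^\e$ and $K_\omega^\e$ on $(\e,\tau)$ in \eqref{3.28} and \eqref{3.30} follows from continuous dependence of $v$ on its driving term $\e z^{\theta_\tau \omega}$, combined with the pathwise modulus of continuity of $z$ in time. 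Condition~\ref{E} is the well-known ergodicity of the Wiener shift, and the Hölder continuity of the map \eqref{HC} is obtained from the same time-regularity estimates together with standard pathwise continuous dependence on initial data.

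With these hypotheses verified, Theorem~\ref{t3.7} produces a random compact set $\MM_\omega^\e$ with the required semi-invariance, uniform exponential attraction, deterministic-constant fractal dimension bound, and Hölder continuity \eqref{3.52}; specializing \eqref{3.52} to $t_1 = t_2 = 0$ gives \eqref{5.3}. The independence of $\MM_\omega^0$ on $\omega$ is immediate from the observation at the end of Section~\ref{s3.3}: at $\e = 0$ the objects $\varphi_t^{0,\omega}$, $R_\omega^0$, and $K_\omega^0$ are all deterministic, since the stochastic forcing drops out. The main technical obstacle I foresee is the simultaneous quantitative verification of \eqref{3.28} and \eqref{3.30} together with the integrability requirement $(RK)^m \in L^1(\Omega,\IP)$: this demands sharp pathwise polynomial bounds on functionals of $\|z\|_{H^3}$ that are controlled in both the $\e$ direction and under the time shift, and in the borderline subcritical regime $p = n/(n-2)$ one has to choose interpolation inequalities delicately so as not to lose regularity when absorbing $f(\e z + v)$ into the energy estimate for $v$ in $H^2$.
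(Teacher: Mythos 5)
Your overall strategy is the paper's: decompose $u=\e U+v$ through the stationary solution of the linear equation $\dot z-a\Delta z=\eta$, verify Condition~\ref{UH} for the time-$\tau_0$ maps with $\tau_0$ chosen large (note your opening claim of an ``arbitrary fixed'' $\tau_0$ is quietly retracted when you need the one-step dissipation to dominate for the stability property \eqref{3.29}), invoke Theorem~\ref{t3.7}, read off \eqref{5.3} from \eqref{3.52} with $t_1=t_2$, and get the $\omega$-independence of $\MM_\omega^0$ from the remark closing Section~\ref{s3.3}. All of that matches.

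The substantive deviation is your choice $V=H^2\cap H_0^1$ with $m=n/2$, in place of the paper's $V=H_0^1$ with $m=n$, and this is where the argument has a genuine gap. Condition~\ref{UH} requires the one-step smoothing estimate \eqref{3.30} from $H=L^2$ into $V$, so with your $V$ you must prove $\|w(\tau_0)\|_{H^2}\le K_\omega\|w(0)\|_{L^2}$ for the difference $w=u_1-u_2$ of two solutions. This difference solves $\dot w-a\Delta w=-g$ with $g=f(u_1)-f(u_2)$, and the growth hypothesis \eqref{2.8} controls $g$ only in $L^2$ in space (indeed, exactly in $L^2$ at the borderline $p=n/(n-2)$). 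A forcing in $L^\infty((0,\tau_0);L^2)$ yields $w(\tau_0)\in H^{2-\epsilon}$ at best by analytic-semigroup or energy arguments; to actually land in $H^2$ you must gain one spatial derivative on $g$, and $\nabla g$ contains terms of the form $f''(\cdot)\,w\,\nabla u_i$. The hypotheses give $f\in C^2$ with a growth bound on $f'$ but none on $f''$, and $H^2\not\subset L^\infty$ for $n\ge4$, so these terms are not controlled in the stated generality; the same obstruction reappears when you try to make the absorbing ball genuinely bounded in $H^2$. The paper avoids all of this by taking $V=H_0^1$: only one derivative of smoothing is needed, the difference estimates \eqref{4.21}--\eqref{4.22} use only the growth of $f'$, and the price is merely the larger entropy exponent $m=n$, which still yields a deterministic dimension bound. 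Either revert to $V=H_0^1$, or add a growth hypothesis on $f''$ (and supply the corresponding $H^2$ estimates); as written, the verification of \eqref{3.30} for your choice of $V$ does not go through.
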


To prove this result, we shall apply Theorem~\ref{t3.7}. For the reader's convenience, let us describe briefly the conditions we need to check, postponing their verification to the next subsection. 

Recall that $H=L^2$, $V=H_0^1$, and the probability space~$(\Omega,\FF,\IP)$ and the corresponding group of shits operators~$\theta_t$ were defined in Section~\ref{s2.3}. The ergodicity of the restriction of~$\{\theta_t\}$ to any lattice $\tau_0\Z$ is well known (see Condition~\ref{E}), and the Kolmogorov $\e$-entropy of a unit ball in~$V$ regarded as a subset in~$H$ can be estimated by $C\e^{-n}$, where~$n$ is the space dimension (see the fourth item of Condition~\ref{UH}). We shall prove that the following properties are true for a sufficiently large $\tau_0>0$.

\medskip
\noindent
{\bf Absorbing set.} 
There are random variables $R_\omega^\e,R_\omega\ge0$ such that $R_\omega^\e\le R_\omega$ for all $\e\in[-1,1]$, $R\in L^q(\Omega,\IP)$ for any $q\ge1$,  and for any ball $B\subset H$ and a sufficiently large $T(B)>0$ we have
\begin{equation} \label{4.4}
u^{\e,\theta_{\tau}\omega}(t;u_0)\in B_V(R_{\theta_t\omega}^\e)\quad
\mbox{for $t\ge T(B)$, $|\tau|\le\tau_0$, $|\e|\le1$, $u_0\in B$},
\end{equation}
where $u^{\e,\omega}(t;u_0)$ denotes the solution of~\eqref{5.1}, \eqref{1.2}, \eqref{1.3}. Moreover, $R_\omega^\e$ satisfies inequality~\eqref{3.28} with $y_i=\e_i\in[-1,1]$ for an integrable random variable~$L_\omega$ and a deterministic constant $\alpha\in(0,1]$. 

\smallskip
\noindent
{\bf Stability.}
There is $r>0$ such that
\begin{equation} \label{4.5}
u^{\e,\omega}(\tau_0;u_0)\in B_V(R_{\theta_{\tau_0}\omega}^\e)
\quad\mbox{for $|\e|\le1$, $u_0\in \OO_r\bigl(B_V(R_\omega^\e)\bigr)$}. 
\end{equation}

\smallskip
\noindent
{\bf H\"older continuity.}
There is $\alpha>0$ such that, for any $T>0$ and any random variable $r_\omega>0$ all of whose moments are finite,  one can construct a family of random variables $K_\omega^\e\ge1$ satisfying the inequalites
\begin{align} 
&\|u^{\e_1,\theta_{\tau_1}\omega}(t_1;u_{01})
-u^{\e_2,\theta_{\tau_2}\omega}(t_2;u_{02})\|\notag\\
& \qquad\qquad\le K_\omega^{\e_1,\e_2}\bigl(|\e_1-\e_2|
+|\tau_1-\tau_2|^\alpha+\|u_{01}-u_{02}\|+|t_1-t_2|^\alpha\bigr),\label{4.6}\\
&\|u^{\e_1,\theta_{\tau_1}\omega}(\tau_0;u_{01})
-u^{\e_2,\theta_{\tau_2}\omega}(\tau_0;u_{02})\|_1\notag\\
& \qquad\qquad \le K_\omega^{\e_1,\e_2}\bigl(|\e_1-\e_2|
+|\tau_1-\tau_2|^\alpha+\|\tilde u_{01}-\tilde u_{02}\|\bigr),\label{4.60}
\end{align}
where $|\e_i|\le1$, $|\tau_i|\le\tau_0$, $0\le t_i\le T$, $u_{0i}\in B_V(r_\omega)$, $\tilde u_{0i}\in B_H(r_\omega)$, and we set $K_\omega^{\e_1,\e_2}=\max\{K_\omega^{\e_1},K_\omega^{\e_1}\}$. Moreover, there is a random variable $K_\omega$ belonging to $L^q(\Omega,\IP)$  for any $q\ge1$ such that $K_\omega^\e\le K_\omega$ for all $\e\in[-1,1]$. 

\medskip
We shall also prove that the random variables~$R_\omega^0$ and~$K_\omega^0$ are constants. If these properties are established, then all the hypotheses of Theorem~\ref{t3.7} are fulfilled, and its application to the RDS associated with problem~\eqref{5.1}, \eqref{1.2} gives the conclusions of Theorem~\ref{t5.1}. 

\subsection{Proof of Theorem~\ref{t5.1}}
\label{s5.2}
{\it Step~1: Absorbing set}.
Let $U^{\e,\omega}(t)$ be the unique stationary solution of the equation
\begin{equation} \label{4.7}
\dot u-a\Delta u=\e\,\eta(t,x), \quad t\in\R,
\end{equation}
supplemented with the Dirichlet boundary condition~\eqref{1.2}. It is straightforward to see that, with probability~$1$,
\begin{equation} \label{4.8}
U^{\e,\theta_\tau\omega}(t)=\e\,U^{\omega}(t+\tau), \quad t,\tau\in\R,
\end{equation}
where $U^{\omega}(t)=U^{1,\omega}(t)$. Using the It\^o formula and the regularity assumption~\eqref{5.2}, one can prove that\,\footnote{For instance, see Proposition~2.4.10 in~\cite{KS2012} for the more complicated case of the Navier--Stokes system.}
\begin{equation} \label{4.9}
\E\,e^{\delta\sup_t M_t}<\infty, \quad 
M_t(\omega):=\|U^\omega(t)\|_3^2+\biggl|\int_0^t\|U^\omega(s)\|_4^2\,ds\biggr|-C(1+|t|),
\end{equation}
where $\delta>0$ and~$C>0$ are deterministic constant, and the supremum is taken over $t\in\R$. Moreover, by Proposition~\ref{c5.7}, inequality~\eqref{6.30} holds for~$U$. 

Solutions of~\eqref{5.1}, \eqref{1.2} can be written as 
\begin{equation} \label{4.10}
u^{\e,\theta_\tau\omega}(t,x)=\e\,U^{\omega}(t+\tau,x)+v^{\e,\tau,\omega}(t,x),
\end{equation}
where $v=v^{\e,\tau,\omega}$ is the solution of the problem
\begin{align}
\dot v-a\Delta v+f(v+\e\,U^\omega(t+\tau))&=h(x), \label{4.11}\\
v\bigr|_{\p D}&=0,\label{4.12}\\
v(0,x)&=v_0(x), \label{4.13}
\end{align}
where $v_0(x)=u_0(x)-\e\,U^\omega(\tau,x)$. In what follows, we shall often omit the subscripts~$\e$ and~$\omega$ to simplify notation. We wish to derive some a priori estimates for~$v$. Since the corresponding argument is rather standard, we only sketch it. 

Taking the scalar product of~\eqref{4.11} in~$L^2$ and carrying out some transformations, we derive
\begin{equation} \label{4.14}
\p_t\|v\|^2+c_1\bigl(\|v\|^2+\|v\|_1^2+\|v\|_{L^{p+1}}^{p+1}\bigr)
\le C_1\bigl(1+\|h\|_{-1}^2+\|\e\,U\|_{L^{p+1}}^{p+1}\bigr),
\end{equation}
where $U=U^\omega(\cdot+\tau)$, and we used inequalities~\eqref{2.5}, \eqref{2.6}, and~\eqref{2.8}. Let us fix any $\delta\in(0,c_1)$. Applying the Gronwall inequality, using the continuity of the embedding $H^1\subset L^{p+1}$, and recalling that $|\tau|\le \tau_0$, we obtain
\begin{equation} \label{4.15}
\|v(t)\|^2+c_1\int_0^te^{-c_1(t-\sigma)}\bigl(\|v\|_1^2+\|v\|_{L^{p+1}}^{p+1}\bigr)\,d\sigma
\le e^{-c_1 t}\|v_0\|^2+R_{\theta_t\omega}^{\e,1},
\end{equation}
where we set 
$$
R_\omega^{\e,1}=
C_1\int_{-\infty}^0e^{\delta\sigma}\bigl(1+\|h\|_{-1}^2
+\|\e\,U^\omega(\sigma+\tau_0)\|_{1}^{p+1}\bigr)\,d\sigma. 
$$

We now derive a similar estimate for the $H^1$ norm of~$v$. Taking the scalar product of~\eqref{4.11}  with $-2(t-s)\Delta v$ in~$L^2$, after some transformations we derive
\begin{multline*}
\p_t\bigl((t-s)\|\nabla v\|^2\bigr)+c_2(t-s)\,\|v\|_2^2\\
\le \|\nabla v\|^2
+C_2(t-s)\bigl(1+\|h\|^2+\|v\|_{L^{p+1}}^{p+1}+\|\e\,U\|_2^{p+1}\bigr). 
\end{multline*}
Integrating in $t\in(s,s+1)$, we obtain 
\begin{multline*} 
\|\nabla v(s+1)\|^2+c_2\int_s^{s+1}(\sigma-s)\|\Delta v\|^2d\sigma\\
\le C_3+\int_s^{s+1}\bigl(\|\nabla v\|^2+C_2\|v\|_{L^{p+1}}^{p+1}\bigr)d\sigma
+C_2\int_s^{s+1}\|\e\,U\|_2^{p+1}d\sigma,
\end{multline*}
where $C_3=C_2(1+\|h\|^2)$. Taking $s=t-1$ and using~\eqref{4.15} to estimate the second term on the right-hand side, we obtain 
\begin{equation} \label{4.16}
\|v(t)\|_1^2\le C_3+C_4\bigl(e^{-c_1 t}\|v_0\|^2+R_{\theta_t\omega}^{\e,1}\bigr)+C_2 R_{\theta_t\omega}^{\e,2},\quad t\ge1,
\end{equation}
where we  set\,\footnote{To have an absorbing set, one could take for $R_\omega^{\e,2}$ the integral of $\|\e\,U^\omega(\sigma+\tau_0)\|_2^{p+1}$ in $\sigma\in[-3,0]$. However, in this case the stability condition may not hold, and therefore we define~$R_\omega^{\e,2}$ in a different way. Our choice ensures that~\eqref{4.20} holds for the radius of the absorbing ball.}
$$
R_\omega^{\e,2}=\int_{-\infty}^{0}e^{\delta(\sigma+3)}\|\e\,U^\omega(\sigma+\tau_0)\|_2^{p+1}d\sigma.
$$
Let us define $R_\omega^\e$ by the relation 
\begin{equation} \label{4.17}
(R_\omega^\e)^2=8\Bigl(1+C_3+C_4R_{\omega}^{\e,1}
+C_2R_{\omega}^{\e,2}+\sup_{\sigma\le0}\bigl(e^{\delta(\sigma+2\tau_0)}\|\e\,U^\omega(\sigma+\tau_0)\|_1^2\bigr)\Bigr)
\end{equation}
and set $R_\omega=R_\omega^1$. It is clear that $R_\omega^\e\le R_\omega$ for all $\e\in[-1,1]$. 
Relations~\eqref{4.10} and~\eqref{4.16} imply that
\begin{equation} \label{4.18}
\|u^{\e,\theta_\tau\omega}(t)\|_1^2
\le 2C_4e^{-c_1 t}\bigl(\|u_0\|^2+\|\e\,U^\omega(\tau)\|^2\bigr)
+4^{-1}(R_{\theta_t\omega}^\e)^2-1, \quad t\ge1,
\end{equation}
whence we see~\eqref{4.4} holds for any ball $B\subset H$ and a sufficiently large $T(B)>0$. Moreover, it follows from~\eqref{4.9} and~\eqref{4.17} that all the moments of~$R_\omega$ are finite. Finally, Proposition~\ref{c5.7} and the stationarity of~$U$ imply that~$R_\omega^\e$ satisfies~\eqref{3.28} with a constant~$\alpha\in(0,1/2)$ and an integrable random variable~$L_\omega$. 

\smallskip
{\it Step~2: Stability}.
It follows from~\eqref{4.17} that the stability property~\eqref{4.5} with parameters $r>0$ and~$\tau_0>0$ will certainly be satisfied if 
\begin{equation} \label{4.19}
2C_4e^{-c_1 \tau_0}\bigl((R_\omega^\e+r)^2+\|\e\,U^\omega(\tau)\|^2\bigr)
+4^{-1}(R_{\theta_{\tau_0}\omega}^\e)^2-1\le (R_{\theta_{\tau_0}\omega}^\e)^2. 
\end{equation}
Let us note that 
\begin{equation} \label{4.20}
(R_{\theta_t\omega}^\e)^2\ge e^{-\delta t}(R_\omega^\e)^2, \quad t\ge0. 
\end{equation}
We now take an arbitrary $r>0$ and choose $\tau_0>0$ so large that
$$
4C_4e^{-c_1\tau_0}r^2\le 1, \quad 16 C_4e^{-(c_1-\delta)\tau_0}\le1.
$$
In this case, inequality~\eqref{4.19} holds, so that the stability condition is fulfilled.

\smallskip
{\it Step~3: H\"older continuity}.
Representation~\eqref{4.10} implies that it suffices to establish analogues  of~\eqref{4.6} and~\eqref{4.60} for solutions of problem~\eqref{4.11}--\eqref{4.13}. Namely, we first prove that for any random variable $r_\omega>0$ with finite moments there is a family of almost surely finite random variables~${\widetilde K}_\omega^\e$ such that
\begin{align}
\|v_1(t)-v_2(t)\|
& \le {\widetilde K}_\omega^{\e_1,\e_2}\bigl(\|\e_1U^{\omega_1}-\e_2U^{\omega_2}\|_{L^\infty(0,T;H^1)}+\|v_{01}-v_{02}\|\bigr),\label{4.21}\\
\|v_1(\tau_0)-v_2(\tau_0)\|_1
& \le {\widetilde K}_\omega^{\e_1,\e_2}\bigl(\|\e_1U^{\omega_1}-\e_2U^{\omega_2}\|_{L^\infty(0,\tau_0;H^2)}+\|v_{01}-v_{02}\|\bigr),
\label{4.22}
\end{align}
where $|\e_i|\le1$, $|\tau_i|\le\tau_0$, $0\le t\le T$, $v_{0i}\in B_H(r_\omega)$, and we set $\omega_i=\theta_{\tau_i}\omega$, $v_i(t)=v^{\e_i,\tau_i,\omega}$, and ${\widetilde K}_\omega^{\e_1,\e_2}=\max\{{\widetilde K}_\omega^{\e_1}, {\widetilde K}_\omega^{\e_2}\}$. Moreover, our proof will imply that ${\widetilde K}_\omega^\e\le {\widetilde K}_\omega$ for all $\e\in[-1,1]$, where the random constant ${\widetilde K}$ belongs to $L^q(\Omega,\IP)$ for any $q\ge1$. 
Once these properties are established, the H\"older continuity of~$U^\omega(t)$  and relations~\eqref{4.9} and~\eqref{4.10} will prove inequalities~\eqref{4.6} and~\eqref{4.60} with $t_1=t_2$. We shall next show that the solutions of~\eqref{4.11}--\eqref{4.13} with $v_0\in B_V(r_\omega)$ satisfy the inequality
\begin{equation} \label{4.23}
\|v^{\e,\tau,\omega}(t_1;v_0)-v^{\e,\tau,\omega}(t_2;v_0)\|
\le {\widetilde K}_\omega^\e|t_1-t_2|^{\alpha}, \quad t_1,t_2\in[0,T],
\end{equation}
with possibly a larger random constant ${\widetilde K}_\omega^\e$ with the same property. This will complete the proof of the property of H\"older continuity and that of Theorem~\ref{t5.1}. 

\medskip
We begin with~\eqref{4.21}. To simplify the presentation, we shall assume that $n\ge3$. In what follows, we denote by $\{K_\omega^{\e,i}, \e\in[-1,1]\}$ (where $i\ge1$) families of random variables that can be bounded by a random constant belonging to $L^q(\Omega,\IP)$ for any $q\ge1$.  The difference $v=v_1-v_2$ satisfies the equation
\begin{equation} \label{4.25}
\dot v-a\Delta v+f(u_1)-f(u_2)=0
\end{equation}
and the boundary and initial conditions~\eqref{4.12} and~\eqref{4.13}, where  $u_i=v_i+\e_i U^{\omega^i}$ and $v_0=v_{01}-v_{02}$.  Taking the scalar product of~\eqref{4.25} with $2v$ in $L^2$ and using the ``monotonicity'' assumption~\eqref{2.7}, we derive
\begin{align*}
\p_t\|v\|^2+2c_3\|\nabla v\|^2&\le -\bigl(f(u_1)-f(u_2),v\bigr)\\
&\le C\|v\|^2+C_5\bigl(\|\xi\|_{L^q}+\||u_1|^{p-1}\xi\|_{L^q}+\||u_2|^{p-1}\xi\|_{L^q}\bigr)
\|v\|_{1}\\
&\le C\|v\|^2+c_3\|\nabla v\|^2+C_6\bigl(1+\|u_1\|_{L^{p+1}}^{2(p-1)}
+\|u_2\|_{L^{p+1}}^{2(p-1)}\bigr)\|\xi\|_1^2,
\end{align*}
where $q=\frac{2n}{n+2}$ and $\xi=\e_1U^{\omega_1}-\e_2U^{\omega_2}$. Applying the Gronwall inequality, we obtain
\begin{equation} \label{4.26}
\|v(t)\|^2+c_3\int_0^te^{C(t-\sigma)}\|\nabla v\|^2d\sigma
\le e^{Ct}\|v_0\|^2+C_7\max\{K_\omega^{\e,1},K_\omega^{\e,2}\}\|\xi\|_{L^\infty(0,T;H^1)}^2,
\end{equation}
where $0\le t\le T$, $C_7=C_7(T)$, and we set
$$
K_\omega^{\e,1}=\int_0^T\bigl(1+\|u^{\e,\omega}(\sigma)\|_{L^{p+1}}^{p+1}\bigr)\,d\sigma.
$$
Inequality~\eqref{4.26} immediately  implies~\eqref{4.21}. 

\smallskip
To prove~\eqref{4.22}, we first note that, in view of~\eqref{4.26}, there is a measurable function $s:\Omega\to\R$ such that, with probability~$1$, we have $s_\omega\in[\frac{\tau_0}{4}, \frac{3\tau_0}{4}]$  and 
\begin{equation} \label{4.207}
\|\nabla v(s_\omega)\|^2\le C_8\bigl(\|v_0\|^2+K_\omega^{\e,1}\|\xi\|_{L^\infty(0,\tau_0;H^1)}^2\bigr). 
\end{equation}
Let us take the scalar product of~\eqref{4.25} with $-2\Delta v$ in~$L^2$. After some transformations, we obtain 
\begin{equation} \label{4.27}
\p_t\|\nabla v\|^2+2c_4\,\|\Delta v\|^2
\le C_8
\bigl(1+\|u_1\|_{L^{n(p-1)}}^{p-1}+\|u_2\|_{L^{n(p-1)}}^{p-1}\bigr)
\|v+\xi\|_{L^q}\|\Delta v\|,
\end{equation}
where $q=\frac{2n}{n-2}$. Since $H^1\subset L^q$ and $H^1\subset L^{n(p-1)}$, applying the interpolation and Cauchy--Schwartz inequalities, from~\eqref{4.27} we derive
$$
\p_t\|\nabla v(t)\|^2+c_4\,\|\Delta v\|^2\le 
C_9\bigl(1+\|u_1\|_1^{4(p-1)}+\|u_2\|_1^{4(p-1)}\bigr)\,(\|v\|^2+\|\xi\|_1^2).
$$
Integrating in $t\in[s_\omega,\tau_0]$ and using~\eqref{4.26}, \eqref{4.207} and~\eqref{4.18} (we can assume that $\tau_0\ge4$), we obtain~\eqref{4.22}. 

\smallskip
It remains to establish inequality~\eqref{4.23}. We shall only outline its proof. Taking the scalar product of~\eqref{4.11} with $-2\Delta v$ and using some standard arguments (cf.\ derivation of~\eqref{4.16}), we obtain
$$
\int_0^T\|\Delta v\|^2d\sigma\le C_{10}\|v_0\|_1^2+K_\omega^{\e,2}.
$$
Combining this with~\eqref{4.9} and~\eqref{4.11}, we see that 
$$
\int_0^T\|\dot v\|^2d\sigma\le K_\omega^{\e,3}. 
$$
It follows that $v$ is H\"older continuous with the exponent~$1/2$. Since~$U^\omega$ is also H\"older continuous in time, in view of~\eqref{4.10} we arrive at the required result. Finally, it is not diffucult to see that the random variables~$R_\omega^0$ and~$K_\omega^0$ are constant. The proof of the theorem is complete.

\section{Appendix}
\label{s6}
\subsection{Coverings for random compact sets}
\label{s6.1}
In this section, we have gathered three auxiliary results on coverings of  random compact sets by balls centred at the points of  random finite sets. The first of them establishes the existence of a ``minimal'' covering with an explicit bound of the number of balls in terms of the Kolmogorov $\e$-entropy of the random compact set in question.

\begin{lemma} \label{l6.1}
Let $\{\aA_\omega\}$ be a random compact set in a Hilbert space~$H$. Then  for any measurable function~$\delta=\delta_\omega$ satisfying the inequality $0<\delta\le1$ one can construct a random finite set $U_\delta(\omega)\subset H$ such that for 
\begin{align} 
d^s\bigl(\aA_\omega,U_\delta(\omega)\bigr)
&\le\delta_\omega,\label{6.1}\\
\ln\bigl(\#U_{\delta}(\omega)\bigr)
&\le \HH_{\delta_\omega/2}(\aA_\omega,H).
\label{6.2}
\end{align}
Moreover, if~$\delta_\omega\equiv\delta$ is constant, then one can replace~$\delta_\omega/2$ in the right-hand side~\eqref{6.2} by~$\delta$. 
\end{lemma}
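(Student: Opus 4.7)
The plan is to carry out in a measurable way the classical deterministic construction of a $\delta$-net of near-minimal cardinality, and then to tighten the constant by a factor of~$2$ when $\delta_\omega$ is deterministic.

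For each fixed $\omega$, writing $\delta:=\delta_\omega$ for brevity, I would build $U_\delta(\omega)\subset\aA_\omega$ as a maximal $\delta$-separated subset via a greedy procedure: starting from any $u_1\in\aA_\omega$, at step $k+1$ choose $u_{k+1}\in\aA_\omega$ with $\|u_{k+1}-u_j\|>\delta$ for all $j\le k$ whenever such a point exists; compactness of $\aA_\omega$ guarantees termination after finitely many steps. Maximality forces $d(\aA_\omega,U_\delta(\omega))\le\delta$, and since $U_\delta(\omega)\subset\aA_\omega$ the opposite distance vanishes, which yields~\eqref{6.1}. For the cardinality bound~\eqref{6.2}, I would observe that any minimal cover of $\aA_\omega$ by balls of radius $\delta/2$ contains at most one point of $U_\delta(\omega)$ per ball (any two such points would be within~$\delta$ of each other, contradicting the strict separation), so that $\#U_\delta(\omega)\le\exp\bigl(\HH_{\delta/2}(\aA_\omega,H)\bigr)$.

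The main obstacle is to make the greedy selection measurable in~$\omega$. I would produce an initial point $u_1(\omega)\in\aA_\omega$ via the Kuratowski--Ryll-Nardzewski measurable selection theorem applied to the random compact set $\omega\mapsto\aA_\omega$. Given measurable $u_1(\omega),\dots,u_k(\omega)$, the multifunction
$$
\omega\mapsto \aA_\omega\cap\bigl\{u\in H:\min_{1\le j\le k}\|u-u_j(\omega)\|>\delta_\omega\bigr\}
$$
is again a random closed set (using the measurability of $\delta_\omega$ and of the $u_j$), so another measurable selection supplies $u_{k+1}(\omega)$ on the measurable event where this set is nonempty. The total number of steps $\#U_\delta(\omega)$ is the first~$k$ for which the set becomes empty, hence measurable.

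For the last assertion, if $\delta_\omega\equiv\delta$ is constant I would abandon the internal construction in favour of an external cover. The minimal covering number $N(\omega):=N_\delta(\aA_\omega,H)$ is an integer-valued random variable with $\ln N(\omega)\le\HH_\delta(\aA_\omega,H)$. On each measurable event $\{N=n\}$, a measurable selection argument produces centres $v_1(\omega),\dots,v_n(\omega)\in H$ of a minimal cover of $\aA_\omega$ by closed balls of radius~$\delta$; by minimality every such ball meets $\aA_\omega$, so each $v_i(\omega)$ lies within~$\delta$ of $\aA_\omega$. Setting $U_\delta(\omega)=\{v_1(\omega),\dots,v_{N(\omega)}(\omega)\}$ then yields $d(\aA_\omega,U_\delta(\omega))\le\delta$ by the covering property and $d(U_\delta(\omega),\aA_\omega)\le\delta$ by the previous observation, giving $d^s(\aA_\omega,U_\delta(\omega))\le\delta$ together with $\ln\#U_\delta(\omega)\le\HH_\delta(\aA_\omega,H)$.
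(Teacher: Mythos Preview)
Your route is genuinely different from the paper's. The paper avoids abstract selection theorems altogether: it fixes a countable dense sequence $\{u_k\}\subset H$, and for constant~$\delta$ partitions~$\Omega$ according to the minimal number~$n$ of balls $B(u_{k_i},\delta)$ needed to cover~$\aA_\omega$; on each piece~$\Omega_n$ an explicit lexicographic rule picks out one covering $n$-tuple via measurable indicators $I_k:\Omega_n\to\{0,1\}$, and one sets $U_\delta(\omega)=\{u_k:I_k(\omega)=1\}$. The random-$\delta$ case is then reduced to the constant one by slicing~$\Omega$ into the dyadic shells $\{2^{-k}<\delta_\omega\le2^{1-k}\}$ and using the constant construction at level~$2^{-k}$ on each shell---this reduction is where the factor~$1/2$ in~\eqref{6.2} is lost. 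Your greedy-packing approach is more conceptual, yields $U_\delta(\omega)\subset\aA_\omega$ for free, and handles random~$\delta_\omega$ directly; the paper's approach is more elementary and entirely self-contained.

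There is, however, a technical gap in your inductive step. The multifunction
\[
\omega\ \longmapsto\ \aA_\omega\cap\bigl\{u:\min_{j\le k}\|u-u_j(\omega)\|>\delta_\omega\bigr\}
\]
is \emph{not} closed-valued---you are intersecting a compact set with an open set---so Kuratowski--Ryll-Nardzewski does not apply as stated. Replacing~$>$ by~$\ge$ makes the values compact, but then the net is only $\ge\delta_\omega$-separated, and two points at distance exactly~$\delta$ can sit in a single closed ball of radius~$\delta/2$ (take $\{0,1\}\subset B_\R(1/2,1/2)$), which breaks your packing bound for~\eqref{6.2}. A clean repair is to write the open set as the increasing union of the compact sets $\{u\in\aA_\omega:\min_j\|u-u_j(\omega)\|\ge\delta_\omega+1/m\}$, $m\ge1$, and apply the selection theorem to the first nonempty one; this keeps strict separation and all measurability. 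Your sketch for the constant-$\delta$ refinement is correct in spirit, but you should also note that it rests on the multifunction $\omega\mapsto\{(v_1,\dots,v_n)\in H^n:\aA_\omega\subset\bigcup_iB(v_i,\delta)\}$ being closed-valued with measurable graph, and on the completeness of~$(\Omega,\FF,\IP)$ for the measurability of~$\{N(\omega)=n\}$.
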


Note that inequality~\eqref{6.1} is equivalent to the inclusions
\begin{equation} \label{6.01}
\aA_\omega\subset \bigcup_{u\in U_\delta(\omega)} B_H(u,\delta_\omega), \quad
U_{\delta}(\omega)\subset\OO_{\delta_\omega}(\aA_\omega).
\end{equation}

\begin{proof}
We first assume that~$\delta_\omega\equiv\delta$. Let $\{u_k\}\subset H$ be a dense sequence. For any $\kkk=\{k_1,\dots,k_n\}\subset\N$, define the random varable
$$
Z_\omega(\kkk)=
\left\{
\begin{array}{cl}
1,&\aA_\omega\subset\bigcup\limits_{i=1}^n B(u_{k_i},\delta),\\[4pt]
0,&\mbox{otherwise}. 
\end{array}
\right.
$$
Since~$\aA_\omega$ is a (random) compact set, for any $\omega$ there is a finite subset $\kkk\subset\N$ such that $Z_\omega(\kkk)=1$.  Let~$\Omega_n$ be the set of those~$\omega\in\Omega$ for which there is an $n$-tuple $\kkk\subset\N$ such that $Z_\omega(\kkk)=1$ and $Z_\omega(\kkk')=0$ for any subset $\kkk'\subset\N$ containing less than~$n$ elements. Then we have $\Omega=\cup_{n\ge1}\Omega_n$. Furthermore, since $\Omega_n$ is the intersection of the measurable sets
$$
\bigcap_{\#\kkk=n-1}\{Z_\omega(\kkk)=0\}\quad\mbox{and}\quad
\bigcup_{\#\kkk=n}\{Z_\omega(\kkk)=1\}, 
$$
we have $\Omega_n\in\FF$ for any $n\ge1$. Thus, it suffices to construct~$U_\delta$ on each subset~$\Omega_n$. 

Indexing the set of all $n$-tuples $\kkk\subset\N$ in an arbitrary way, it is easy to construct measurable functions $I_k:\Omega_n\to\{0,1\}$ such that, for any $\omega\in\Omega_n$, we have  
\begin{equation} \label{6.3}
\#\kkk(\omega)=n, \quad 
\aA_\omega\subset\bigcup\limits_{k\in\kkk(\omega)} B_H(u_k,\delta),
\quad B_H(u_k,\delta)\cap \aA_\omega\ne\varnothing,
\end{equation}
where $\kkk(\omega)=\{k\in\N:I_k(\omega)=1\}$ and $k\in\kkk(\omega)$ in the third relation. We claim that $U_\delta(\omega)=\{u_k,k\in\kkk(\omega)\}$ satisfies the required properties. Indeed, for any $u\in H$, we have 
$$
d(u,U_\delta(\omega))=\min\{\|u-u_k\|:I_k(\omega)=1\}, 
\quad \omega\in\Omega_n,
$$
whence it follows easily that $U_\delta(\omega)$ is a random finite set. Furthermore, inclusions~\eqref{6.01} (which are equivalent to inequality~\eqref{6.1}) are consequences of the second and third relations in~\eqref{6.3}. Let us prove that inequality~\eqref{6.2} holds with~$\delta_\omega/2$ replaced by~$\delta_\omega$; that is,
\begin{equation} \label{6.4}
\ln n\le \HH_{\delta}(\aA_\omega,H)
\quad\mbox{for $\omega\in\Omega_n$}. 
\end{equation}
To see this, note that the set~$\aA_\omega$ admits a covering by balls~$\{B_j\}$  such that
$$
\ln\bigl(\#\{B_j\}\bigr)\le \HH_{\delta}(\aA_\omega,H), \quad \diam(B_j)\le\delta. 
$$
Choosing arbitrary points $u_{k_j}$ in every ball~$B_j$, we see that one can cover~$\aA_\omega$ by the balls $\{B_H(u_{k_j},\delta)\}$. The choice of~$n$ now implies that $n\le\#\{B_j\}$, whence it follows that~\eqref{6.4} holds.

We now turn to the case of an arbitrary function~$\delta_\omega$ such that $0<\delta_\omega\le1$. Let us define $\Omega^{(k)}=\{\omega\in\Omega:2^{-k}<\delta_\omega\le2^{1-k}\}$, so that $\Omega=\cup_{k\ge1}\Omega^{(k)}$. In view of what has been proved above, on each~$\Omega^{(k)}$ one can construct a random finite set $U_k(\omega)$ such that, for $\omega\in\Omega^{(k)}$, we have
\begin{equation*} 
d^s\bigl(\aA_\omega,U_k(\omega)\bigr)\le 2^{-k}, \quad
\ln\bigl(\#U_k(\omega)\bigr)\le \HH_{2^{-k}}(\aA_\omega,H). 
\end{equation*}
Setting $U_\delta(\omega)=U_k(\omega)$ for $\omega\in\Omega^{(k)}$, we obtain the required covering. The proof of the lemma is complete.
\end{proof}

The second result shows that, if a random compact set depends on a parameter in a Lipschitz manner, then the random finite set constructed above can be chosen to have a similar dependence on the parameter. To prove it, we shall need the following auxiliary construction. 

Let us denote by~$\Delta_n\subset\R^n$ the set of vectors $\theta=(\theta_1,\dots,\theta_n)$ such that $\theta_i\ge0$ and $\sum_i\theta_i=1$. 
Given subsets $W_i\subset H$, $1\le i\le n$, a vector $\theta\in\Delta_n$, and a number~$\alpha>0$, we define
$$
[W_1,\dots,W_n]_\theta^\alpha=\biggl\{\sum_{i=1}^n\theta_iu_i:u_i\in W_i, \|u_i-u_j\|_H\le \alpha\mbox{ for $1\le i,j\le n$}\biggr\}. 
$$
It is straightforward to check that
\begin{align} 
\ln \bigl(\#[W_1,\dots,W_n]_\theta^\alpha\bigr)
&\le \ln(\#W_1)+\cdots+\ln(\#W_n),\label{6.15}\\
d^s\bigl([W_1,\dots,W_n]_{\theta^1}^\alpha,[W_1,\dots,W_n]_{\theta^2}^\alpha\bigr)
&\le\alpha |\theta^1-\theta^2|,
\label{6.16} 
\end{align}
where $\theta^j=(\theta_1^j,\dots,\theta_n^j)$ and $|\theta^1-\theta^2|=\max_i|\theta_i^1-\theta_i^2|$. 
Moreover, if~$\aA\subset H$ and $r_i\ge0$ are such that
$$
\aA\subset \bigcup_{u\in W_i}B_H(u,r_i), \quad 1\le i\le n,
$$
then for any $\theta\in\Delta_n$ we have
\begin{equation} \label{6.17}
\aA\subset\bigcup_{u\in [W_1,\dots,W_n]_{\theta}^{r}}
B_H(u,\max\{r_i,1\le i\le n\}),
\end{equation}
where $r=\max\{r_i+r_j,1\le i,j\le n\}$. 

\begin{proposition} \label{p6.2}
Let $Y\subset\R$ be a closed interval and let $\{\aA_\omega^y,y\in Y\}$ be a family of random compact sets in a Hilbert space~$H$  such that 
\begin{equation} 
d^s(\aA_\omega^{y_1},\aA_\omega^{y_2})
\le C\,|y_1-y_2|\quad\mbox{for $y_1,y_2\in Y$},\label{6.6}
\end{equation}
where~$C\ge1$  is a  finite random constant. Then there exists a random finite set $(\delta,y,\omega)\mapsto U_{\delta,y}(\omega)$ with the underlying space $(0,1]\times Y\times\Omega$ such that 
\begin{align}
d^s\bigl(\aA_\omega^y,U_{\delta,y}(\omega)\bigr)&\le \delta,\label{6.8}\\
\ln\bigl(\#U_{\delta,y}(\omega)\bigr)
&\le 4\,\HH_{2^{-4}\delta}(\aA_\omega^y,H),\label{6.9}\\
d^s\bigl(U_{\delta_1,y_1}(\omega),U_{\delta_2,y_2}(\omega)\bigr)
&\le c\,\bigl(|\delta_1-\delta_2|+C\, |y_1-y_2|\bigr),
\label{6.10}
\end{align}
where $y,y_1,y_2\in Y$, $\delta,\delta_1,\delta_2\in(0,1]$, and $c\ge1$ is an absolute constant.
\end{proposition}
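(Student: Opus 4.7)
The plan is to build $U_{\delta,y}(\omega)$ by combining a doubly dyadic discretization with the convex-combination operation $[W_1,\dots,W_n]_\theta^\alpha$, since properties \eqref{6.15}--\eqref{6.17} turn an interpolation of the weight vector $\theta$ into a Lipschitz control of the resulting finite set, which is exactly what \eqref{6.10} asks for.

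Fix dyadic scales $\delta_k=2^{-k}$ so that each $\delta\in(0,1]$ lies in a unique stratum $(\delta_{k+1},\delta_k]$. At each scale $k$, introduce a random uniform partition of $Y$ with step $h_k(\omega)=\delta_k/(\kappa C_\omega)$, where $\kappa$ is a large deterministic power of~$2$ so that the scale-$(k+1)$ grid refines the scale-$k$ grid; let $y_{k,i}(\omega)$ denote the grid points. Apply Lemma~\ref{l6.1} with constant tolerance $\delta_k/\kappa$ to each random compact set $\omega\mapsto\aA_\omega^{y_{k,i}(\omega)}$, producing a random finite net $W_{k,i}(\omega)$ with $d^s(\aA_\omega^{y_{k,i}},W_{k,i})\le\delta_k/\kappa$ and $\ln\#W_{k,i}\le\HH_{\delta_k/(2\kappa)}(\aA_\omega^{y_{k,i}},H)$. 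Given $(\delta,y)$ with $\delta\in(\delta_{k+1},\delta_k]$, set $\mu=2(\delta_k-\delta)/\delta_k\in[0,1)$ to encode the transition between scales $k$ and $k+1$, and let $\lambda,\nu\in[0,1)$ be the linear parameters positioning $y$ inside its scale-$k$ and scale-$(k+1)$ cells with left-endpoint indices $i$ and $j$. Then I define
$$
U_{\delta,y}(\omega)=\bigl[W_{k,i},W_{k,i+1},W_{k+1,j},W_{k+1,j+1}\bigr]_\theta^{\alpha_k},\qquad
\theta=\bigl((1-\mu)(1-\lambda),(1-\mu)\lambda,\mu(1-\nu),\mu\nu\bigr),
$$
where $\alpha_k=c_0\delta_k/\kappa$ is an absolute multiple of $\delta_k/\kappa$ chosen to dominate all pairwise symmetric distances between the four participating nets. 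The verification of the three conclusions is then straightforward: measurability is automatic; \eqref{6.8} follows from~\eqref{6.17} and~\eqref{6.6} because each of the four nets lies within $O(\delta_k/\kappa)$ of $\aA_\omega^y$; \eqref{6.9} follows from~\eqref{6.15} combined with the Lipschitz-induced comparison $\HH_{\delta_k/(2\kappa)}(\aA_\omega^{y_{k',\cdot}},H)\le\HH_{\delta/16}(\aA_\omega^y,H)$, where the factor $4$ matches the four summands; finally \eqref{6.10} follows from~\eqref{6.16} together with the bounds $|\partial_\delta\theta|=O(1/\delta_k)$ and $|\partial_y\theta|=O(\kappa C_\omega/\delta_k)$, which combined with $\alpha_k=O(\delta_k/\kappa)$ give exactly the right constants on the right-hand side.

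The principal technical obstacle is ensuring continuity across the dyadic strata $\delta=\delta_k$ and across the $y$-grid boundaries $y=y_{k,i}$, where the identity of the four participating nets changes. The four-point formula is built so that at $\delta=\delta_k^-$ one has $\mu=0$ and the formula collapses to a pure scale-$k$ interpolation on $W_{k,i},W_{k,i+1}$, while in the coarser stratum $(\delta_k,\delta_{k-1}]$ the corresponding transition parameter equals $1$ at $\delta=\delta_k^+$, so that the scale-$(k-1)$ contribution also drops out; the refinement property $y_{k+1,2i}=y_{k,i}$ plays the analogous role at the $y$-grid boundaries. What remains delicate is to guarantee that the two ``degenerate'' brackets at a scale transition genuinely coincide as \emph{sets}, not merely as families of weighted sums, since the witness constraints $\|u_i-u_j\|\le\alpha_k$ coming from the zero-weight components differ on the two sides. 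I would resolve this by enforcing a nesting property $W_{k,i}\subset W_{k+1,2i}$ across scales---built in by enlarging the scale-$(k+1)$ net from Lemma~\ref{l6.1} to absorb its scale-$k$ parent, which only doubles the cardinality and is harmless for \eqref{6.9}---together with a slightly enlarged $\alpha_k$, so that the zero-weight witness constraints can always be satisfied by reusing the nonzero-weight selections.
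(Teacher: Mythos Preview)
Your plan is essentially the paper's own argument: discretise dyadically in~$\delta$, lay down a uniform $y$-grid of mesh proportional to~$\delta/C$ at each dyadic scale, build nets at the grid nodes via Lemma~\ref{l6.1}, and glue them with the bracket $[W_1,\dots,W_4]_\theta^\alpha$ using bilinear weights so that~\eqref{6.16} yields the Lipschitz estimate. The paper packages the bilinear weights through the geometric Lemma~\ref{l5.3} (area ratios on a rectangle), while you write the tensor-product formula explicitly; this is purely cosmetic. One small point you should adopt from the paper: it first partitions~$\Omega$ according to the integer part of~$C_\omega$ and thereby reduces to a deterministic Lipschitz constant. This makes the $y$-grid non-random, which you need both to apply Lemma~\ref{l6.1} to $\omega\mapsto\aA_\omega^{y_{k,i}}$ without further justification and to make your refinement identity $y_{k+1,2i}=y_{k,i}$ hold exactly.

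Where your write-up goes beyond the paper is in the last paragraph. The paper verifies~\eqref{6.10} only for $(\delta_1,y_1)$ and $(\delta_2,y_2)$ lying in the same $(\delta,y)$-cell and tacitly relies on matching at cell boundaries to chain the estimate globally; it does not discuss the point you raise about the zero-weight witness constraints in the bracket. Your observation that the four-point bracket with some $\theta_i=0$ need not literally collapse to the corresponding two-point bracket is correct, and your proposed cure (nested nets $W_{k,i}\subset W_{k+1,2i}$ together with an inflated~$\alpha_k$) is a clean way to force the boundary values to agree as sets. An alternative in the spirit of the paper is to note that all four participating nets lie within a fixed fraction of~$\delta_k$ from~$\aA_\omega^y$ (cf.~\eqref{5.17}), so that the possible discrepancy at a boundary is itself~$O(\delta_k/\kappa)$; with~$\kappa$ large this contributes only an absolute constant to~$c$ after summing across at most $O(\log)$ many dyadic interfaces between $\delta_1$ and $\delta_2$ --- but your nesting argument is tidier and avoids this bookkeeping.
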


In particular, taking a measurable function $\delta=\delta_\omega$ with range in~$(0,1]$, we can construct a random finite set~$(y,\omega)\mapsto U_{\delta,y}(\omega)$ such that 
\begin{equation}
d^s\bigl(U_{\delta,y_1}(\omega),U_{\delta,y_2}(\omega)\bigr)
\le c\,C\, |y_1-y_2|,\label{5.31}
\end{equation}
and inequalities~\eqref{6.8} and~\eqref{6.9} hold with $\delta=\delta_\omega$ in the right-hand side.

The proof given below will imply that if~$\aA_\omega^y$ does not depend on~$\omega$ for some $y=y_0$, then the random set~$U_{\delta,y}(\omega)$ satisfying \eqref{6.8}--\eqref{6.10} can be chosen in such a way that~$U_{\delta,y_0}(\omega)$ is also independent of~$\omega$. Furthermore, if~$\aA_\omega^y$ does not depend on~$\omega$ for all $y\in Y$, then~$U_{\delta,y}$ is also independent of~$\omega$. The latter observation implies the following corollary used in the main text. 

\begin{corollary} \label{c5.3}
Let $V\subset H$ be two Hilbert spaces with compact embedding. Then there is a random finite set $(\delta,R)\mapsto U_{\delta,R}$ with the underlying space $(0,1]\times \R_+$ such that 
\begin{align}
d_H^s\bigl(B_V(R),U_{\delta,R}\bigr)&\le\delta,\label{5.014}\\
\ln\bigl(\#U_{\delta,R}\bigr)&\le 4\,\HH_{{\delta}/{16R}}(V,H),\label{5.015}\\
d_H^s(U_{\delta,R_1},U_{\delta,R_2})&\le c\,|R_1-R_2|,\label{5.016}
\end{align}
where  $R,R_1,R_2\ge0$ and $\delta\in(0,1]$ are arbitrary, and $c>0$ is an absolute constant. 
\end{corollary}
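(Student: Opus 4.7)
The plan is to apply Proposition~\ref{p6.2} to the deterministic one-parameter family $\aA_\omega^R := B_V(R)$ of closed balls in~$V$, with the parameter~$R$ playing the role of~$y$. Since $B_V(R)$ does not depend on~$\omega$, the remark immediately preceding the statement of this corollary guarantees that the finite set produced by Proposition~\ref{p6.2} will itself be independent of~$\omega$, which is what we want.

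First, I would verify the Lipschitz estimate~\eqref{6.6} with constant $C=1$. After normalising the norm on~$V$ so that the continuous embedding $V\hookrightarrow H$ has operator norm at most one, for $0\le R_1\le R_2$ and any $u\in B_V(R_2)$ the rescaled vector $v:=(R_1/R_2)u$ belongs to $B_V(R_1)$ and satisfies
\[
\|u-v\|_H\le \|u-v\|_V
=(1-R_1/R_2)\|u\|_V\le R_2-R_1,
\]
while the inclusion $B_V(R_1)\subset B_V(R_2)$ gives $d_H(B_V(R_1),B_V(R_2))=0$. Thus $d_H^s(B_V(R_1),B_V(R_2))\le |R_1-R_2|$, which is~\eqref{6.6} with $C=1$.

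Next, I would invoke the homogeneity identity $\HH_\e(B_V(R),H)=\HH_{\e/R}(V,H)$, which follows from $B_V(R)=R\cdot B_V(1)$ and linearity of the embedding. Substituting this into~\eqref{6.9} (taken with $\delta_\omega\equiv\delta$) converts its right-hand side into $4\,\HH_{\delta/(16R)}(V,H)$, yielding~\eqref{5.015}; inequality~\eqref{6.8} is exactly~\eqref{5.014}, and~\eqref{6.10} with $C=1$ delivers~\eqref{5.016} with the same absolute constant~$c$.

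The only mildly technical point is that Proposition~\ref{p6.2} is stated for a \emph{bounded} closed interval $Y$, whereas here $R$ ranges over all of~$\R_+$. I would handle this by applying the proposition on $Y_N=[0,N]$ for each $N\in\N$ and noting that the Lipschitz constant~$c$ in~\eqref{6.10} is absolute, independent of~$N$, so the constructions on nested intervals can be glued consistently; alternatively, the inductive construction used in the proof of Proposition~\ref{p6.2} goes through verbatim on~$\R_+$ since every estimate is pointwise in the parameter with $\omega$-independent moduli of continuity in the present deterministic setting.
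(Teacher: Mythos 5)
Your proposal is correct and follows exactly the route the paper takes: the authors prove this corollary in one line by applying Proposition~\ref{p6.2} to the non-random compact sets $B_V(R)$ with $R$ as the parameter. The details you supply --- the Lipschitz bound $d_H^s(B_V(R_1),B_V(R_2))\le|R_1-R_2|$ giving \eqref{6.6} with $C=1$, the scaling identity $\HH_\e(B_V(R),H)=\HH_{\e/R}(V,H)$ converting \eqref{6.9} into \eqref{5.015}, and the passage from a bounded interval to $\R_+$ --- are precisely the points the paper leaves implicit, and you handle them correctly.
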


To prove this result, it suffices to apply Proposition~\ref{p6.2} to the non-random compact set~$B_V(R)$ depending on the parameter $R\in\R_+$. 

\begin{proof}[Proof of Proposition~\ref{p6.2}]
Without loss of generality, we assume that the random variable~$C$ is  constant, since one can represent~$\Omega$ as the union of the subsets $\Omega_{l}=\{\omega\in\Omega: l\le C<l+1\}$ and construct required random finite sets on each~$\Omega_{l}$. 

Let us fix an integer $k\ge1$ and denote by $\nu_k<C^{-1}2^{-k-4}$ the largest number such that~$N_k:=\nu_k^{-1}$ is an integer. We now set $y_j^k=j\nu_k$ for $j\in\Z_+$. In view of Lemma~\ref{l6.1}, there are random finite sets $U_j^k(\omega)\subset H$ such that
\begin{align}
d^s\bigl(\aA_\omega^{y_j},U_j^k(\omega)\bigr)&\le 2^{-k-3},\label{5.14}\\
\ln\bigl(\#U_j^k(\omega)\bigr)
&\le \HH_{2^{-k-3}}(\aA_\omega^{y_j},H),\label{5.15}
\end{align}
where we write $y_j$ instead of~$y_j^k$ to simplify the notation. We now need the following lemma, whose proof\,\footnote{We thank A.~Iftimovici for the simple geometric argument proving Lemma~\ref{l5.3}.} is given at end of this section.

\begin{lemma} \label{l5.3}
Let $A_1,\dots,A_4$ be the vertices of a rectangle $\Pi\subset\R^2$. Then there are Lipschitz functions $\theta_i:\Pi\to[0,1]$, $1\le i\le 4$, such that
$$
\sum_{i=1}^4\theta_i(A)=1, \quad \sum_{i=1}^4\theta_i(A)A_i=A
\quad\mbox{for $A\in \Pi$}. 
$$
\end{lemma}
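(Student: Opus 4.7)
My plan is to use bilinear interpolation, which admits a natural geometric interpretation in terms of sub-rectangle areas. By an affine change of coordinates (which preserves Lipschitz continuity and barycentric combinations), I may assume $\Pi=[0,a]\times[0,b]$ with $A_1=(0,0)$, $A_2=(a,0)$, $A_3=(a,b)$, $A_4=(0,b)$. Given a point $A=(x,y)\in\Pi$, the two lines through $A$ parallel to the sides of $\Pi$ partition $\Pi$ into four sub-rectangles; I define $\theta_i(A)$ to be the area of the sub-rectangle diagonally opposite to $A_i$, divided by $ab$. Explicitly,
$$
\theta_1(A)=\frac{(a-x)(b-y)}{ab},\quad
\theta_2(A)=\frac{x(b-y)}{ab},\quad
\theta_3(A)=\frac{xy}{ab},\quad
\theta_4(A)=\frac{(a-x)y}{ab}.
$$

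Each $\theta_i$ is a polynomial of degree~$2$ in $(x,y)$ taking values in $[0,1]$ on $\Pi$, so it is Lipschitz on the bounded set $\Pi$. The identity $\sum_i\theta_i(A)=1$ follows from the algebraic factorisation $[(a-x)+x]\bigl[(b-y)+y\bigr]=ab$, or geometrically from the fact that the four sub-rectangles partition $\Pi$. For the barycentric identity, a direct computation of the first coordinate gives
$$
\sum_{i=1}^4\theta_i(A)(A_i)_1=\frac{a\,x(b-y)+a\,xy}{ab}=x,
$$
and symmetrically the second coordinate equals $y$, so $\sum_i\theta_i(A)A_i=A$.

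No step poses a real obstacle: the construction is exactly bilinear interpolation, and the geometric picture of four sub-rectangle areas summing to the total area is what lets one verify the weight and barycentric identities without any case analysis. The only care required is to ensure that the affine reduction to an axis-aligned rectangle is legitimate, which follows because affine maps preserve both convex combinations (so barycentric coordinates transfer) and Lipschitz regularity of scalar functions.
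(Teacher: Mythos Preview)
Your proof is correct and is essentially the same as the paper's: the paper also divides~$\Pi$ into four sub-rectangles~$\Pi_i$ by the lines through~$A$ parallel to the sides and sets $\theta_i(A)=\area(\Pi_i)/\area(\Pi)$, which is exactly your bilinear interpolation. You have simply written out the verification in more detail than the paper does.
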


Let $\theta_i(A)$, $1\le i\le 4$, be the functions constructed in Lemma~\ref{l5.3} for the rectangle $\Pi=[2^{-k},2^{1-k}]\times[y_j,y_{j+1}]$. For $2^{-k}<\delta\le 2^{1-k}$ and $y_{j}\le y\le y_{j+1}$, denote by $A_{\delta,y}\in\Pi$ the point with  the coordinates~$(\delta,y)$. Let us define
$$
U_{\delta,y}(\omega)
=[U_j^k,U_j^{k+1},U_{j+1}^k,U_{j+1}^{k+1}]_{\theta(\delta,y)}^{2^{-k-1}},
$$
where $\theta(\delta,y)=(\theta_i(A_{\delta,y}), 1\le i\le 4)\in\Delta_4$. We claim that~$U_{\delta,y}(\omega)$ satisfies the required properties. 

\smallskip
Indeed, it follows from the choice of~$y_j$ that
\begin{equation} \label{5.19}
d^s(\aA_\omega^{y_j},\aA_\omega^{y})\le 2^{-k-4}
\quad\mbox{for $y_j\le y\le y_{j+1}$}.
\end{equation}
Combining this with~\eqref{5.14}, we see that 
\begin{equation} \label{5.17}
d^s\bigl(\aA_\omega^y,U_{j}^k(\omega)\bigr)\le 2^{-k-2}.
\end{equation}
Inclusion~\eqref{6.17} now implies that 
\begin{equation} \label{5.18}
d\bigl(\aA_\omega^{y},U_{\delta,y}(\omega)\bigr)\le 2^{-k-2}\le\delta/4.
\end{equation}
On the other hand, the definition of~$U_{\delta,y}(\omega)$ and inequality~\eqref{5.17} imply that 
$$
d\bigl(U_{\delta,y}(\omega),\aA_\omega^{y}\bigr)\le 2^{-k}\le\delta.
$$
Combining this with~\eqref{5.18}, we obtain~\eqref{6.8}. 

Inequality~\eqref{5.19} implies that an $\e$-covering for~$\aA_\omega^{y}$ with $y_j\le y\le y_{j+1}$ is an $(\e+2^{-k-4})$-covering for~$\aA_\omega^{y_j}$. Taking $\e=2^{-k-4}$, we see that
$$
\HH_{2^{-k-3}}(\aA_\omega^{y_j},H)\le \HH_{2^{-k-4}}(\aA_\omega^{y},H). 
$$
Combining this with~\eqref{5.15} and~\eqref{6.15}, we obtain~\eqref{6.9}:
$$
\ln\bigl(\#U_{\delta,y}(\omega)\bigr)
\le 4\HH_{2^{-k-3}}(\aA_\omega^{y_j},H)
\le 4\HH_{2^{-k-4}}(\aA_\omega^{y},H)
\le 4\HH_{2^{-4}\delta}(\aA_\omega^{y},H).
$$
Finally, inequality~\eqref{6.10} follows from~\eqref{6.16} and the explicit form of the functions~$\theta_i(A)$ (see~\eqref{5.20}):
\begin{align*}
d^s\bigl(U_{\delta_1,y_1},U_{\delta_2,y_2}\bigr)
&\le 2^{-k-1}|\theta(A_{\delta_1,y_1})-\theta(A_{\delta_2,y_2})|\\
&\le 2^{-k-1}(\nu_k 2^{-k})^{-1}
\bigl(\nu_k|\delta_1-\delta_2|+2^{-k}|y_1-y_2|\bigr)\\
&\le \tfrac12\,|\delta_1-\delta_2|+8C\,|y_1-y_2|. 
\end{align*}
The proof of the proposition is complete. 
\end{proof}

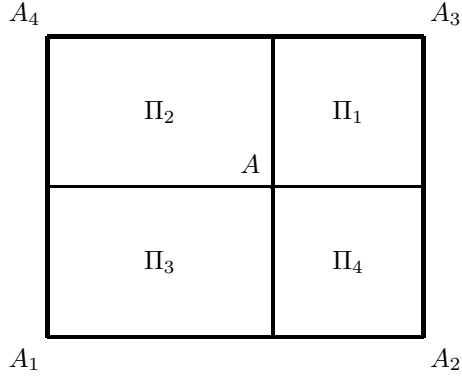
\begin{figure}[t]
\hskip2,5cm
\ifx\JPicScale\undefined\def\JPicScale{1}\fi
\unitlength \JPicScale mm
\begin{picture}(65,40)(0,0)

\linethickness{0.5mm}
\put(10,3){\line(0,1){40}}

\linethickness{0.5mm}
\put(10,3){\line(1,0){50}}

\linethickness{0.5mm}
\put(60,3){\line(0,1){40}}

\linethickness{0.5mm}
\put(10,43){\line(1,0){50}}
\put(40,23){\circle*{0.8}}

\linethickness{0.3mm}
\put(40,3){\line(0,1){40}}
\linethickness{0.3mm}
\put(10,23){\line(1,0){50}}
\put(7,0){\makebox(0,0)[cc]{$A_1$}}

\put(63,0){\makebox(0,0)[cc]{$A_2$}}

\put(63,46){\makebox(0,0)[cc]{$A_3$}}

\put(7,46){\makebox(0,0)[cc]{$A_4$}}

\put(37,26){\makebox(0,0)[cc]{$A$}}

\put(50,33){\makebox(0,0)[cc]{$\Pi_1$}}

\put(50,13){\makebox(0,0)[cc]{$\Pi_4$}}

\put(25,13){\makebox(0,0)[cc]{$\Pi_3$}}

\put(25,33){\makebox(0,0)[cc]{$\Pi_2$}}

\end{picture}
\caption{Division of~$\Pi$ into four rectangles}
\label{rectangle}
\end{figure}

And, finally, our third result refines Proposition~\ref{p6.2} in a particular case. 

\begin{lemma} \label{l5.4}
Let $Y$ be an arbitrary metric space, let $\KK\subset H$ be a compact subset, let $(y,\omega)\mapsto V^y(\omega)$ be a random finite set, and let
$$
\aA_\omega^y=\bigcup_{v\in V^y(\omega)}(v+\KK).
$$
Then there is a random finite set $(\delta,y,\omega)\mapsto U_{\delta,y}(\omega)$ with the underlying space $(0,1]\times Y\times H$ such that~\eqref{6.8} holds, and 
\begin{align} 
\ln\bigl(\# U_{\delta,y}(\omega)\bigr)&\le \ln(\# V^{y}(\omega)\bigr)+\HH_{\delta/2}(\KK,H),
\label{5.21}\\
d^s\bigl(U_{\delta,y_1}(\omega_1),U_{\delta,y_2}(\omega_2)\bigr)
&\le d^s\bigl(V^{y_1}(\omega_1),V^{y_2}(\omega_2)\bigr).
\label{5.22}
\end{align}
\end{lemma}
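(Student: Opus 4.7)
The plan is to exploit the special product-like structure of $\aA_\omega^y$: since it is the union of translates of a single compact set~$\KK$ that does not depend on $(y,\omega)$, a single deterministic $\delta$-net of~$\KK$ will generate a covering of every~$\aA_\omega^y$ simply by translation. All the nontrivial dependence on $(y,\omega)$ will then be inherited from $V^y(\omega)$ alone, which is what makes both~\eqref{5.21} and~\eqref{5.22} essentially automatic.

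Concretely, I would first apply Lemma~\ref{l6.1} to the (deterministic) compact set~$\KK$ with constant parameter $\delta\in(0,1]$ to obtain a deterministic finite set $W_\delta\subset H$ with
$$
d^s(\KK,W_\delta)\le\delta,\qquad \ln(\#W_\delta)\le \HH_{\delta/2}(\KK,H).
$$
I would then define
$$
U_{\delta,y}(\omega):=V^y(\omega)+W_\delta=\bigl\{v+w:v\in V^y(\omega),\ w\in W_\delta\bigr\}.
$$
To ensure joint measurability in $(\delta,y,\omega)$, I would construct $W_\delta$ by choosing one such net on each dyadic interval $\delta\in(2^{-k},2^{1-k}]$ (exactly the device used at the end of the proof of Lemma~\ref{l6.1}); the resulting $U_{\delta,y}(\omega)$ is then measurable in $(y,\omega)$ for each~$\delta$ and piecewise constant in~$\delta$.

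The three conclusions are then verified in turn. For~\eqref{6.8}, given $u=v+k\in \aA_\omega^y$ with $v\in V^y(\omega)$, $k\in\KK$, pick $w\in W_\delta$ with $\|k-w\|\le\delta$; then $v+w\in U_{\delta,y}(\omega)$ is within~$\delta$ of~$u$, while the inclusion $W_\delta\subset\OO_\delta(\KK)$ (see~\eqref{6.01}) handles the reverse direction. Inequality~\eqref{5.21} follows from $\#U_{\delta,y}(\omega)\le \#V^y(\omega)\cdot \#W_\delta$. For~\eqref{5.22}, given any point $v_1+w\in U_{\delta,y_1}(\omega_1)$ with $v_1\in V^{y_1}(\omega_1)$ and $w\in W_\delta$, choose $v_2\in V^{y_2}(\omega_2)$ closest to $v_1$; then $v_2+w\in U_{\delta,y_2}(\omega_2)$ and
$$
\|(v_1+w)-(v_2+w)\|=\|v_1-v_2\|\le d\bigl(V^{y_1}(\omega_1),V^{y_2}(\omega_2)\bigr).
$$
Taking the supremum over $v_1+w$ gives the one-sided Hausdorff bound, and symmetrising yields~\eqref{5.22}.

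I do not foresee any serious obstacle; the translational structure of $\aA_\omega^y$ collapses the problem to picking a deterministic net of~$\KK$ once and for all, so this amounts to bookkeeping. The only mildly delicate point is preserving joint measurability in $(\delta,y,\omega)$, which is addressed by the dyadic discretisation of~$\delta$ mentioned above.
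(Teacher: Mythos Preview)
Your argument is correct and is essentially the same as the paper's: build a deterministic $\delta$-net $W_\delta$ of the fixed compact~$\KK$ and set $U_{\delta,y}(\omega)=V^y(\omega)+W_\delta$. The only cosmetic difference is how measurability in~$\delta$ is secured: you discretise~$\delta$ dyadically by hand, whereas the paper applies Lemma~\ref{l6.1} to the family $\delta\mapsto\delta\KK$ viewed as a ``random'' compact set over the parameter space~$(0,1]$, obtaining a $\delta^2$-net~$U_\delta$ of~$\delta\KK$ and then taking $W_\delta=\delta^{-1}U_\delta$; the scaling identity $\HH_{\delta^2/2}(\delta\KK,H)=\HH_{\delta/2}(\KK,H)$ recovers exactly your entropy bound.
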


\begin{proof}
Applying Lemma~\ref{l6.1} to the random compact set $\delta\mapsto \delta\KK$ with the underlying space~$(0,1]$, we construct a random finite set $\delta\mapsto U_\delta$ such that 
$$
d^s(\delta\KK,U_\delta)\le\delta^2,\quad
\ln(\#U_\delta)\le \HH_{\delta^2/2}(\delta\KK,H)=\HH_{\delta/2}(\KK,H). 
$$
It is straightforward to see that the random set 
$$
U_{\delta,y}(\omega)=\delta^{-1}U_\delta+V^y(\omega)
=\{\delta^{-1}u+v:u\in U_\delta, v\in V^y(\omega)\}
$$
possesses all required properties. 
\end{proof}

As is clear from the proof, if~$V^y(\omega)$ does not depend on~$\omega$ for some $y=y_0$, then the random set~$U_{\delta,y_0}(\omega)$ constructed in Lemma~\ref{l5.4} is also independent of~$\omega$. 

\begin{proof}[Proof of Lemma~\ref{l5.3}]
Given a point~$A\in\Pi$, we divide the rectangle~$\Pi$ into four smaller rectangles~$\Pi_i$ (see Figure~\ref{rectangle}). 
It is easy to prove that the functions 
\begin{equation} \label{5.20}
\theta_i(A)=\frac{\area(\Pi_i)}{\area(\Pi)}, \quad 1\le i\le 4,
\end{equation}
possess the required properties. 
\end{proof}

\subsection{Image of random compact sets}
\label{s5.02}
\begin{proposition} \label{p5.6}
Let $X$ and~$Y$ be Polish spaces, let $(\Omega,\FF)$ be a measurable space, let $\{\KK_\omega,\omega\in\Omega\}$ be a random compact set in~$X$, and let $\psi_\omega:X\to Y$ be a family of continuous mappings such that, for any $u\in X$, the mapping $\omega\mapsto \psi_\omega(u)$ is measurable from~$\Omega$ to~$Y$. Then $\{\psi_\omega(\KK_\omega), \omega\in\Omega\}$ is a random compact set in~$Y$. 
\end{proposition}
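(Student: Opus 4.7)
My plan is to check the two defining properties of a random compact set: for every $\omega$, the set $\psi_\omega(\KK_\omega)$ is compact in $Y$, and for every $v\in Y$, the function $\omega\mapsto d_Y(v,\psi_\omega(\KK_\omega))$ is measurable. The compactness is immediate: $\psi_\omega$ is continuous and $\KK_\omega$ is compact, so $\psi_\omega(\KK_\omega)$ is the continuous image of a compact set. The nontrivial point is measurability, and the key tool I would invoke is a Castaing-type representation of the random compact set $\KK_\omega$.

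First I would produce a countable family of measurable selections $u_n\colon\Omega\to X$ such that, for every $\omega\in\Omega$,
$$
\KK_\omega=\overline{\{u_n(\omega):n\in\N\}}.
$$
For a random compact set in a Polish space this is standard: the measurability of $\omega\mapsto d_X(u,\KK_\omega)$ for each $u\in X$, combined with a measurable selection theorem (see, e.g., the Kuratowski--Ryll-Nardzewski theorem), produces such a sequence. Since $\KK_\omega$ is compact and $\psi_\omega$ is continuous, the image $\psi_\omega(\KK_\omega)$ equals the closure of $\{\psi_\omega(u_n(\omega)):n\in\N\}$ in~$Y$.

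Next I would verify that each map $v_n(\omega):=\psi_\omega(u_n(\omega))$ is measurable from $\Omega$ to $Y$. Here I would use the Carath\'eodory structure of $(\omega,u)\mapsto\psi_\omega(u)$: it is continuous in $u$ for fixed $\omega$ and measurable in $\omega$ for fixed $u$. A standard lemma (approximating the measurable selection $u_n$ by simple functions and passing to the limit using continuity in~$u$) then shows that $\omega\mapsto\psi_\omega(u_n(\omega))$ is measurable as a composition of a Carath\'eodory function with a measurable map. With the $v_n$ in hand, for any $v\in Y$ we have
$$
d_Y\bigl(v,\psi_\omega(\KK_\omega)\bigr)=\inf_{n\in\N}d_Y\bigl(v,v_n(\omega)\bigr),
$$
which is a countable infimum of measurable functions, hence measurable. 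This yields the required property and completes the argument.

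The main obstacle I anticipate is purely foundational rather than analytic: making sure the Castaing representation is applicable in the stated generality (a measurable space $(\Omega,\FF)$, no assumed completeness of $\IP$) and that the composition lemma for Carath\'eodory functions holds in the Polish-space setting. Both hold with the standard statements (the selections come from $d_X(\cdot,\KK_\omega)$-measurability and a dense sequence in $X$, and the composition lemma follows by approximating~$u_n$ by $X$-valued simple functions using a countable dense set and invoking continuity of $\psi_\omega$), so no further hypotheses on $(\Omega,\FF)$ are needed.
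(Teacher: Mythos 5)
Your proposal is correct, and it reaches the conclusion by a genuinely different route from the paper. The paper avoids measurable selections altogether: reusing the covering construction of Lemma~\ref{l6.1}, it builds random \emph{finite} sets $\KK_\omega^n\subset\{u_k\}$ drawn from a fixed deterministic dense sequence, with $d_X^s(\KK_\omega,\KK_\omega^n)\le 1/n$ and $\KK_\omega^n=\{u_k: I_k^n(\omega)=1\}$ for measurable indicators $I_k^n$. Because the candidate centres $u_k$ are non-random, the measurability of $\omega\mapsto d_Y\bigl(u,\psi_\omega(\KK_\omega^n)\bigr)=\inf_k d_Y(u,\psi_\omega(u_k))/I_k^n(\omega)$ follows directly from the hypothesis that $\omega\mapsto\psi_\omega(u_k)$ is measurable, with no Carath\'eodory composition lemma needed; the price is a limiting step $F_u=\liminf_n F_u^n$, which uses the (uniform) continuity of $\psi_\omega$ near the compact set $\KK_\omega$. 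Your route instead takes a Castaing representation $\KK_\omega=\overline{\{u_n(\omega)\}}$ via Kuratowski--Ryll-Nardzewski, which yields the exact identity $d_Y(v,\psi_\omega(\KK_\omega))=\inf_n d_Y(v,\psi_\omega(u_n(\omega)))$ with no approximation of the set, but then you must pay for the $\omega$-dependence of the selections with the composition lemma for Carath\'eodory maps. Both arguments are sound in the stated generality (no completeness of $\FF$ is needed for either, and both tacitly use that $\KK_\omega\ne\varnothing$); the paper's version is more self-contained given its Lemma~\ref{l6.1}, while yours is shorter if one is willing to cite the selection theorem.
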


\begin{proof}
Let us set fix $u\in Y$ and define a function $F_u:\Omega\to \R$ by 
$$
F_u(\omega)=d_Y\bigl(u,\psi_\omega(\KK_\omega)\bigr)
=\inf_{v\in \KK_\omega}d_Y(u,\psi_\omega(v)). 
$$
We need to prove that this function is measurable. Let~$\{u_k\}\subset X$ be a dense sequence. Repeating the argument used in the proof of Lemma~\ref{l6.1}, we can construct a sequence of random finite sets~$\KK_\omega^n\subset\{u_k\}$ such that
\begin{gather}
d_X^s(\KK_\omega,\KK_\omega^n)\le\frac1n, \label{7.31}\\
\KK_\omega^n=\{u_k,I_k^n(\omega)=1\}, \label{7.32}
\end{gather}
where $I_k^n:\Omega\to\{0,1\}$, $k,n\ge1$, are measurable functions. It follows from~\eqref{7.31} that 
$$
F_u(\omega)=\liminf_{n\to\infty}F_u^n(\omega), \quad
F_u^n(\omega)=d_Y\bigl(u,\psi_\omega(\KK_\omega^n)\bigr). 
$$
Thus, it suffices to establish the measurability of~$F_u^n$. To this end, note that, in view of~\eqref{7.32}, we have
$$
F_u^n(\omega)=\inf_{v\in\KK_\omega^n}d_Y(u,\psi_\omega(v))
=\inf_{k\ge1}\frac{d_Y(u,\psi_\omega(u_k))}{I_k^n(\omega)}. 
$$
This relation readily implies the required property. 
\end{proof}

\subsection{Kolmogorov--\v Centsov theorem}
\label{s6.2}
The Kolmogorov--\v Centsov theorem provides a sufficient condition for H\"older-continuity of trajectories of a random process. We shall need the following qualitative version of that result. 

\begin{theorem} \label{t5.6}
Let $X$ be a Banach space and let $\{\xi_t, 0\le t\le T\}$ be an $X$-valued random process with almost surely continuous trajectories that is defined on a probability space $(\Omega,\FF,\IP)$ and satisfies the inequality
\begin{equation} \label{7.50}
\E\,\|\xi_t-\xi_s\|_X^{2p}\le C_p|t-s|^p\quad\mbox{for any $t,s\in[0,T]$, $p\ge1$},
\end{equation}
where $C_p>0$ is a constant not depending on~$t$ and~$s$. Then for any $\gamma\in(0,1/2)$ there is a constant $K_\gamma>0$ and an almost surely positive random variable $t_\gamma$ such that
\begin{align}
\|\xi_t(\omega)-\xi_s(\omega)\|_X&\le K_\gamma|t-s|^\gamma
\quad\mbox{for $|t-s|\le t_\gamma(\omega)$}, \label{7.51}\\
\E\,t_\gamma^{-q}&<\infty\quad\mbox{for any $q\ge1$}.\label{7.52} 
\end{align}
\end{theorem}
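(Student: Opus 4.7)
The plan is to carry out the standard dyadic chaining proof of the Kolmogorov--\v Centsov theorem, being careful to keep track of the tail of the ``first good scale'' in order to extract the moment bound~\eqref{7.52}. Without loss of generality take $T=1$. For each integer $n\ge0$, introduce the dyadic mesh $t_k^n = k\,2^{-n}$, $0\le k\le 2^n$, and set
\begin{equation*}
M_n(\omega) = \max_{0\le k<2^n}\|\xi_{t^n_{k+1}}(\omega)-\xi_{t^n_k}(\omega)\|_X.
\end{equation*}
By Markov's inequality and hypothesis~\eqref{7.50} applied with exponent $2p$, for every $p\ge1$,
\begin{equation*}
\IP\bigl(M_n>2^{-\gamma n}\bigr)\le 2^n\,\max_k\IP\bigl(\|\xi_{t^n_{k+1}}-\xi_{t^n_k}\|_X>2^{-\gamma n}\bigr)
\le C_p\,2^{n}\,2^{-np(1-2\gamma)}=C_p\,2^{-n(p(1-2\gamma)-1)}.
\end{equation*}
Since $\gamma<1/2$, we may choose $p$ so large that $p(1-2\gamma)-1\ge q+2$ for any prescribed $q\ge1$; this makes the above probability summable. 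Define
\begin{equation*}
N(\omega)=\min\bigl\{N\ge 1:M_n(\omega)\le 2^{-\gamma n}\text{ for all }n\ge N\bigr\},
\end{equation*}
with the convention $N=\infty$ if no such $N$ exists; by Borel--Cantelli, $N$ is a.s.\ finite.

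The next step is the standard dyadic chaining. Given $s,t\in[0,1]$ with $0<t-s\le 2^{-N(\omega)}$, choose $n\ge N(\omega)$ such that $2^{-(n+1)}<t-s\le 2^{-n}$ and connect~$s$ and~$t$ through a finite sequence of dyadic rationals of depth at most $n+m$ for $m\ge 1$, each consecutive pair differing by $2^{-(n+m)}$, with at most $4$ jumps at each depth. Using $M_{n+m}\le 2^{-\gamma(n+m)}$, the triangle inequality yields
\begin{equation*}
\|\xi_t(\omega)-\xi_s(\omega)\|_X\le 4\sum_{m\ge 0}2^{-\gamma(n+m)}
\le \frac{4}{1-2^{-\gamma}}\,2^{-\gamma n}\le K_\gamma\,|t-s|^\gamma,
\end{equation*}
with $K_\gamma=8/(1-2^{-\gamma})$. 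Thus~\eqref{7.51} holds with $t_\gamma(\omega):=2^{-N(\omega)}$.

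It remains to estimate the negative moments of~$t_\gamma$. Fix $q\ge 1$, and pick $p$ as above so that $\alpha:=p(1-2\gamma)-1>q+1$. Since $\{N\ge n\}\subset\bigcup_{m\ge n-1}\{M_m>2^{-\gamma m}\}$,
\begin{equation*}
\IP(N\ge n)\le \sum_{m\ge n-1}C_p\,2^{-m\alpha}\le \frac{C_p\,2^\alpha}{1-2^{-\alpha}}\,2^{-n\alpha},
\end{equation*}
whence, with $t_\gamma=2^{-N}$,
\begin{equation*}
\E\,t_\gamma^{-q}=\E\,2^{qN}=\sum_{n\ge 1}2^{qn}\,\IP(N=n)\le\sum_{n\ge 1}2^{qn}\,\IP(N\ge n)\le C\sum_{n\ge 1}2^{-n(\alpha-q)}<\infty.
\end{equation*}
Since $q$ was arbitrary and $p$ could always be enlarged to absorb it, \eqref{7.52} follows.

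The only genuine subtlety is the freedom to raise $p$: the assumption of~\eqref{7.50} for every $p\ge1$ is what turns the classical Borel--Cantelli/Hölder argument into one yielding all negative moments of the ``first good scale.'' The chaining and union-bound steps themselves are routine.
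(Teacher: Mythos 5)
Your proposal is correct and follows essentially the same route as the paper: dyadic increments controlled via Chebyshev/Markov and hypothesis~\eqref{7.50}, Borel--Cantelli to locate the first good scale $n_0$ (your $N$), the standard chaining argument for~\eqref{7.51} (which the paper delegates to Karatzas--Shreve), and the negative-moment bound obtained by choosing $p$ large enough that $\alpha_p=p(1-2\gamma)-1$ exceeds the prescribed~$q$. The only cosmetic difference is that you write out the chaining step explicitly and get a slightly larger constant $K_\gamma$, which is immaterial.
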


\begin{proof}[Sketch of the proof]
We repeat the argument used in Section~2.2.B of~\cite{KS1991}. Without loss of generality, we can assume that $T=1$. Let us fix any $\gamma\in(0,1/2)$ and introduce the events
$$
\Omega_n^{(k)}=\bigl\{\omega\in\Omega: 
\|\xi_{k/2^n}(\omega)-\xi_{(k-1)/2^n}(\omega)\|_X
\ge 2^{-\gamma n}\bigr\}, 
\quad \Omega_n=\bigcup_{k=1}^{2^n}\Omega_n^{(k)},
$$
where $n\ge1$ and $1\le k\le 2^n$. It follows from~\eqref{7.50} and the Chebyshev inequality that 
$$
\IP\bigl(\Omega_n^{(k)}\bigr)\le C_p2^{-np(1-2\gamma)}. 
$$
Summing up over $k=1,\dots,2^n$, we derive
$$
\IP(\Omega_n)\le C_p2^{-n\alpha_p}, \quad  \alpha_p=-1+p(1-2\gamma).
$$ 
Choosing $p\ge1$ so large that $\alpha_p>0$ and applying the Borel--Cantelli lemma, we construct an almost surely finite random integer $n_0\ge1$ such that $\omega\notin\Omega_n$ for $n\ge n_0(\omega)$ and $\omega\in\Omega_{n_0-1}$ if $n_0(\omega)\ge2$. In particular, we have
\begin{equation} \label{7.53}
 \|\xi_{k/2^n}(\omega)-\xi_{(k-1)/2^n}(\omega)\|_X
 \ge 2^{-\gamma n}
 \quad\mbox{for $n\ne n_0(\omega)$, $k=1,\dots,2^n$}. 
\end{equation}
As is shown in the proof of Theorem~2.8 of~\cite[Chapter~2]{KS1991}, inequality~\eqref{7.53} implies~\eqref{7.51} with $K_\gamma=2/(1-2^{-\gamma})$ and $t_0=2^{-n_0}$. Thus, the theorem will be proved if we show that $\E\,2^{qn_0}<\infty$ for any $q\ge1$. 

To this end, note that $\{n_0=m\}\subset\Omega_{m-1}$ for any $m\ge2$. It follows that
$$
\E\,2^{qn_0}\le 2^q+\sum_{m=2}^\infty 2^{qm}\IP(\Omega_{m-1})
\le 2^q+C_p\sum_{m=2}^\infty 2^{qm-\alpha_p(m-1)}. 
$$
Choosing $p\ge1$ so large that $\alpha_p>q$, we see that the series on the right-hand side of the above inequality converges. 
\end{proof}

Note that one can rewrite~\eqref{7.51} and~\eqref{7.52} in the form
$$
\|\xi_t(\omega)-\xi_s(\omega)\|_X\le C_\gamma(\omega)\,|t-s|^\gamma
\quad t,s\in[0,T],
$$
where $C_\gamma$ is a random variable with finite moments. 
We now apply the above result to establish a time-regularity property for the process~$U^\omega$ defined in the beginning of Section~\ref{s5.2}.

\begin{proposition} \label{c5.7}
For any $\gamma\in(0,1/2)$ and any $T>0$ there is a random variable $C_{\gamma,T}>0$ all of whose moments are finite such that
\begin{equation} \label{6.30}
\|U(t)-U(s)\|_2\le C_{\gamma,T}\,|t-s|^\gamma
\quad \mbox{for $t,s\in[-T,T]$}. 
\end{equation}
\end{proposition}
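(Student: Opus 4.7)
\medskip

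The plan is to apply the Kolmogorov--\v Centsov theorem (Theorem~\ref{t5.6}) with $X=H^2$, so the task reduces to verifying the moment bound
$$
\E\,\|U(t)-U(s)\|_2^{2p}\le C_p\,|t-s|^p, \quad t,s\in[-T,T],\ p\ge 1.
$$
Since for each $t$ the random variable $U(t)$ is Gaussian in every Sobolev space and the increment $U(t)-U(s)$ therefore Gaussian in $H^2$, it is enough to control the second moment: the general $p$-th moment bound follows from the Gaussian hypercontractivity $\E\,\|\xi\|_{H^2}^{2p}\le C_p(\E\,\|\xi\|_{H^2}^2)^p$.

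To estimate $\E\,\|U(t)-U(s)\|_2^2$, I would write $U$ as the stationary stochastic convolution $U(t)=\int_{-\infty}^t S(t-r)\,d\zeta(r)$, where $S(\tau)=e^{\tau a\Delta}$ is the analytic semigroup generated by $a\Delta$. The condition $a+a^t>0$ together with the spectral gap of the Dirichlet Laplacian ensures that $S(\tau)$ is exponentially stable on every Sobolev scale, i.e.\ $\|S(\tau)\|_{H^s\to H^s}\le c\,e^{-\nu\tau}$ for $\tau\ge 0$; moreover, since $a$ is a constant matrix, $S(\tau)$ commutes with $\Delta$. For $s\le t$ one decomposes
$$
U(t)-U(s)=\int_s^t S(t-r)\,d\zeta(r)+\int_{-\infty}^s \bigl[S(t-r)-S(s-r)\bigr]\,d\zeta(r),
$$
which are independent Wiener integrals. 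The It\^o isometry in $H^2$ then gives
$$
\E\,\|U(t)-U(s)\|_2^2=\sum_{j=1}^\infty b_j^2\int_s^t\|S(t-r)e_j\|_2^2\,dr+\sum_{j=1}^\infty b_j^2\int_{-\infty}^s\|(S(t-s)-I)S(s-r)e_j\|_2^2\,dr.
$$

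The first sum is easily estimated by $C\,|t-s|\sum_jb_j^2\lambda_j^2$ using $\|e_j\|_2\le c\lambda_j$ and the exponential decay; this is finite thanks to~\eqref{5.2}. The main technical step, and the place where the strengthened hypothesis $\BBBB_3<\infty$ from~\eqref{5.2} is genuinely needed, is the second sum. Here I would invoke the standard analytic semigroup estimate $\|(S(\tau)-I)v\|_2\le c\tau^{1/2}\|v\|_3$ for $\tau\in[0,1]$, which, combined with $\|S(s-r)e_j\|_3\le c\,e^{-\nu(s-r)}\lambda_j^{3/2}$, produces a contribution bounded by $C\,|t-s|\sum_jb_j^2\lambda_j^3=C\,\BBBB_3\,|t-s|$. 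Thus $\E\,\|U(t)-U(s)\|_2^2\le C|t-s|$, and Gaussianity upgrades this to the $2p$-th moment bound.

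With the moment bound in hand, Theorem~\ref{t5.6} produces, for every $\gamma\in(0,1/2)$, a deterministic constant $K_\gamma$ and a random variable $t_\gamma>0$ with $\E\,t_\gamma^{-q}<\infty$ for all $q\ge 1$, such that $\|U(t)-U(s)\|_2\le K_\gamma|t-s|^\gamma$ whenever $|t-s|\le t_\gamma$. To extend the bound to arbitrary $t,s\in[-T,T]$ I would use the trivial estimate $\|U(t)-U(s)\|_2\le 2\sup_{r\in[-T,T]}\|U(r)\|_2$; by~\eqref{4.9} the supremum has finite moments of every order, so setting
$$
C_{\gamma,T}=\max\Bigl(K_\gamma,\ 2\,t_\gamma^{-\gamma}\sup_{r\in[-T,T]}\|U(r)\|_2\Bigr)
$$
yields the required inequality~\eqref{6.30} with a random constant whose moments of every order are finite, by H\"older's inequality applied to the two factors.
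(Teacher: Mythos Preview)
Your proof is correct but follows a different line from the paper's. The paper also reduces to the Kolmogorov--\v Centsov moment bound, but instead of the stochastic-convolution representation it uses the equation directly: writing $U(t)-U(0)=\int_0^t a\Delta U\,dr+\zeta(t)$, it bounds the drift integral in $H^2$ via the a priori estimate~\eqref{4.9} (which controls $\int_0^t\|U\|_4^2\,dr$) and handles the stochastic term by computing $\E\,\|\zeta(t)\|_2^{2p}$ through the Burkholder inequality applied to $\sum_j b_j\lambda_j\beta_j(t)$. Your route---It\^o isometry on the convolution plus the analytic-semigroup estimate $\|(S(\tau)-I)v\|_2\le c\tau^{1/2}\|v\|_3$, then Gaussian hypercontractivity to reach the $2p$-th moment---is more self-contained: it does not rely on the pathwise bound~\eqref{4.9} for the moment estimate itself, and it makes transparent exactly where $\BBBB_3<\infty$ enters (the $(S(t-s)-I)$ piece). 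The paper's argument, on the other hand, is shorter because it recycles~\eqref{4.9} and avoids semigroup interpolation. Your final patching step, combining $K_\gamma$ with $t_\gamma^{-\gamma}\sup_{[-T,T]}\|U\|_2$, is exactly the content of the remark the paper places immediately after Theorem~\ref{t5.6}.
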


\begin{proof}
In view of the remark following the proof of Theorem~\ref{t5.6}, it suffices to check that~$U$ satisfies inequality~\eqref{7.50} with $[0,T]$ replaced by~$[-T,T]$ and $X=H^1$. Since~$U$ is stationary, we can assume that $s=0$. Equation~\eqref{4.7} implies that
$$
U(t)-U(0)=\int_0^ta\Delta U(r)\,dr+\zeta(t), 
$$
whence, applying the H\"older inequality, it follows that
$$
\|U(t)-U(0)\|_2^{2p}\le 2^{2p-1}
\biggl\{|t|^p\biggl(\int_0^t\|a\Delta u\|_2^{2}dr\biggr)^p+\|\zeta\|_2^{2p}\Biggr\}. 
$$
Using~\eqref{4.9}, we see that the mean value of first term on the right-hand side can be estimated by $C|t|^{p}$. Thus, the required inequality will be established if we show that
\begin{equation} \label{6.31}
\E\,\|\zeta\|_2^{2p}\le C_p|t|^p. 
\end{equation}
To this end, we note that $\|\zeta\|_2^2=\sum_jc_j^2\beta_j^2(t)$, where $c_j=b_j\lambda_j$. 
The monotone convergence theorem and the Burkholder inequality (see Theorem~2.10 in~\cite{HH1980}) imply that 
\begin{align*}
\E\,\|\zeta\|_2^{2p}&=\lim_{n\to\infty}\E\biggl(\sum_{j=1}^n c_j^2\beta_j^2(t)\biggr)^p
\le C_1\lim_{n\to\infty}\E\,\biggl|\sum_{j=1}^n c_j\beta_j(t)\biggr|^{2p}\\
&= C_2(p)\lim_{n\to\infty}\Bigl(|t|\sum_{j=1}^nc_j^2\Bigr)^p
\le C_3(p)\,|t|^p,
\end{align*}
where we used the fact that $\sum_j c_j\beta_j(t)$ is a zero-mean Gaussian random variable with variance $t\sum_jc_j^2$. This proves~\eqref{6.31} and completes the proof of the proposition. 
\end{proof}

\addcontentsline{toc}{section}{Bibliography}

\def\cprime{$'$} \def\polhk#1{\setbox0=\hbox{#1}{\ooalign{\hidewidth
  \lower1.5ex\hbox{`}\hidewidth\crcr\unhbox0}}} \def\cprime{$'$}
  \def\polhk#1{\setbox0=\hbox{#1}{\ooalign{\hidewidth
  \lower1.5ex\hbox{`}\hidewidth\crcr\unhbox0}}} \def\cprime{$'$}
  \def\cprime{$'$} \def\cprime{$'$} \def\cprime{$'$}
\providecommand{\bysame}{\leavevmode\hbox to3em{\hrulefill}\thinspace}
\providecommand{\MR}{\relax\ifhmode\unskip\space\fi MR }
\providecommand{\MRhref}[2]{%
  \href{http://www.ams.org/mathscinet-getitem?mr=#1}{#2}
}
\providecommand{\href}[2]{#2}

\end{document}